%
%






\RequirePackage{amsthm}


\documentclass[sn-mathphys, Numbered]{sn-jnl}





\usepackage{graphicx}%
\usepackage{multirow}%
\usepackage{amsmath,amssymb,amsfonts}%
\usepackage{amsthm}%
\usepackage{mathrsfs}%
\usepackage[title]{appendix}%
\usepackage{xcolor}%
\usepackage{textcomp}%
\usepackage{manyfoot}%
\usepackage{booktabs}%

\usepackage{mathtools}
\usepackage{nicefrac}
\usepackage{stackrel}
\usepackage{xspace}
\usepackage{hyperref}
\usepackage[noabbrev,capitalise]{cleveref}
\usepackage{enumitem}
\usepackage{csquotes}
\usepackage{bm}
\usepackage{autonum}
\usepackage{wrapfig}

\newcommand{\RR}{\mathbb{R}}

\DeclarePairedDelimiterX{\inp}[2]{\langle}{\rangle}{#1, #2}

\crefformat{equation}{(#2#1#3)}
\crefrangeformat{equation}{(#3#1#4) to~(#5#2#6)}
\crefmultiformat{equation}{(#2#1#3)}{ and~(#2#1#3)}{, (#2#1#3)}{ and~(#2#1#3)}
\crefname{theorem}{Th.}{Theorems}
\crefname{definition}{Def.}{Definitions}
\crefname{corollary}{Cor.}{Corollaries}
\crefname{lemma}{Lem.}{Lemmata}




\theoremstyle{thmstyleone}%
\newtheorem{theorem}{Theorem}[section]
\newtheorem{proposition}[theorem]{Proposition}%
\newtheorem{lemma}[theorem]{Lemma}%
\newtheorem{corollary}[theorem]{Corollary}%

\theoremstyle{thmstyletwo}%
\newtheorem{example}[theorem]{Example}%
\newtheorem{remark}[theorem]{Remark}%

\theoremstyle{thmstylethree}%
\newtheorem{definition}[theorem]{Definition}%

\raggedbottom

\begin{document}

\title[Convergence proof for first-order position-based dynamics]{Convergence proof for first-order position-based dynamics: An efficient scheme for inequality constrained ODEs}
%


\author*[1]{\fnm{Steffen} \sur{Plunder}}\email{plunder.steffen.2a@kyoto-u.ac.jp}

\author[2]{\fnm{Sara} \sur{Merino Aceituno}}\email{sara.merino@univie.ac.at}

\affil*[1]{\orgdiv{Institute for the Advanced Study of Human Biology (ASHBi)}, \orgname{KUIAS, Kyoto University}, \orgaddress{\street{Faculty of Medicine Bldg. B}, \city{Kyoto}, \postcode{606-8303}, \country{Japan}}}
\affil[2]{\orgdiv{Faculty of Mathematics}, \orgname{University of Vienna}, \orgaddress{\street{Oskar-Morgenstern-Platz 1}, \city{Vienna}, \postcode{1090}, \country{Austria}}}


\abstract{
NVIDIA researchers have pioneered an explicit method, position-based dynamics (PBD), for simulating systems with contact forces, gaining widespread use in computer graphics and animation. While the method yields visually compelling real-time simulations with surprising numerical stability, its scientific validity has been questioned due to a lack of rigorous analysis.

In this paper, we introduce a new mathematical convergence analysis specifically tailored for PBD applied to first-order dynamics. Utilizing newly derived bounds for projections onto uniformly prox-regular sets, our proof extends classical compactness arguments. Our work paves the way for the reliable application of PBD in various scientific and engineering fields, including particle simulations with volume exclusion, agent-based models in mathematical biology or inequality-constrained gradient-flow models.
}

\keywords{differential inclusions, position-based dynamics, numerical analysis, inequality constrained ODEs, prox-regularity, non-smooth dynamics}



\maketitle


\section{Introduction}

\subsection*{Motivation}

Physical systems often involve objects that interact via contact forces, prohibiting volume overlap. Whether one models idealized hard spheres, rigid bodies, or solids, the treatment of such forces involves mathematical challenges due to the non-smooth nature of contact forces \cite{Wriggers2006,Acary2008}. The theory of ordinary differential equations (ODEs) is not equipped to handle these abrupt changes in forces. However, under specific conditions, well-posed models do exist, supported by a rich mathematical theory of non-smooth dynamics developed by Moreau \cite{Moreau1977}, Filippov \cite{Filippov1988},  and others \cite{Acary2008,Brogliato2020,Braun2021,Kleinert2022}.

In the context of applications in mathematical biology, our interest lies in first-order dynamics, such as overdamped Newton’s laws involving contacts. The behavior of contact forces in first- and second-order dynamics is fundamentally different. Unlike the brief, impact-like collisions typical of second-order dynamics, first-order dynamics often involve sliding or pushing between objects, particularly in the presence of friction and overdamping. An example is shown in \cref{fig:intro:particles}, which models the motion of particles with volume exclusion in the overdamped regime. Such particle simulations are common in agent-based modeling in mathematical biology \cite{Buttenschoen2020}. More generally, first-order non-smooth dynamical systems also arise in electric circuit modeling \cite{Acary2008,Braun2021}, granular models, and crowd motion \cite{Maury2019}.

\begin{wrapfigure}{r}{0.3\textwidth}
    \centering
    \includegraphics[width=0.3\textwidth,trim=2cm 0.5cm 2cm 1cm]{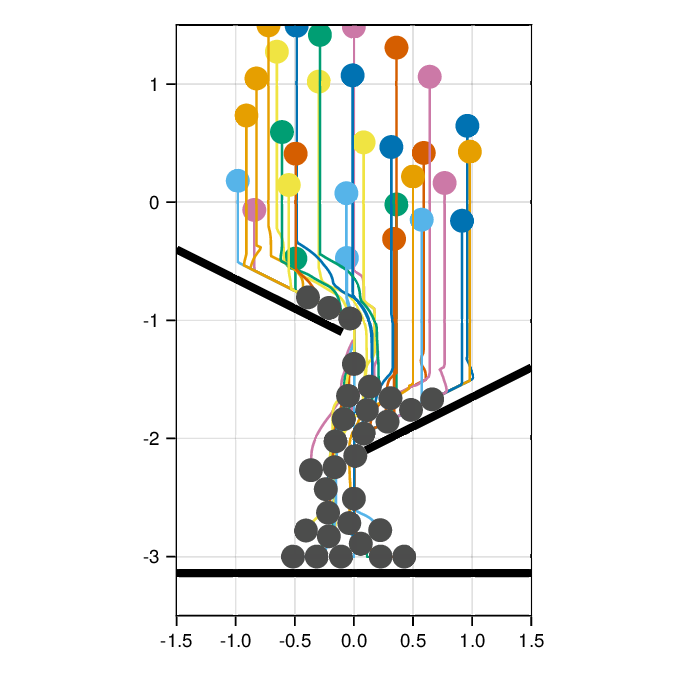}
    \caption{Example for a particle simulation with PBD by imposing the volume exclusion constraint $\Vert X_i - X_j \Vert \geq 2R$ for center positions $X_i \in \mathbb{R}^2$ and radius $R > 0$.}
    \label{fig:intro:particles}
\end{wrapfigure}

Three main strategies exist for the numerical treatment of non-smooth dynamics \cite{Wriggers2006,Acary2008,Dubois2018}: event-tracking, time-stepping, and penalized-constraint schemes \cite{Brogliato2002}. While these methods offer various benefits, they come with significant computational costs. Event-tracking methods and time-stepping methods typically compute the contact forces by solving an implicit equation. However, it is complicated to construct higher-order schemes \cite{Schindler2014}, and the non-smoothness of the dynamics enforces relatively small time steps, creating a computational bottleneck. Penalized-constraint methods smooth out the non-smooth dynamics, making classical ODE methods applicable. Yet, these methods require a delicate balance of numerical stiffness and precision with respect to contact forces. This also results in a computational bottleneck caused by the contact forces.

To address these computational challenges, researchers at NVIDIA have introduced a numerical technique known as position-based dynamics (PBD) \cite{Mueller2006, Bender2017, Macklin2016, Macklin2019}. Unlike conventional time-stepping methods, PBD is explicit rather than implicit, as it terminates an implicit solver for the contact forces right after the first iteration. The reasoning is that in many applications, the best use of the computational budget is to use a computationally cheap approximation of the contact forces but take smaller time steps. This strategy has demonstrated competitive performance in terms of accuracy, stability, speed, and simplicity. PBD's computational efficiency enables real-time simulation of complex mechanical systems, making it widely applicable to fluids, rigid bodies, cloths, and elastic materials \cite{Macklin2013,Macklin2014,Macklin2016,Bender2017,Mueller2020}. PBD serves as a fundamental solver in NVIDIA's multiphysics engine PhysX and the Omniverse simulation platform \cite{Macklin2014,NVIDIA2018,NVIDIA2020}.  

However, despite its growing popularity and broad application scope \cite{Bender2017,Camara2016,Weiss2017,Francu2017,Pecol2011,MonjiAzad2023}, there has been a noticeable gap in providing a rigorous mathematical foundation for PBD. This lack of formal verification has limited its use in scientific fields where the accuracy and reliability of numerical methods are crucial for their adoption.

\subsection*{Mathematical background and projective formulation of PBD}

In this article, we will analyze the convergence of PBD for first-order differential inclusions. Differential inclusions are differential equations with set-valued vector fields, which is an analytically powerful approach to deal with non-smooth terms such as contact forces. We will introduce differential inclusions later in \cref{sec:prelim}. To present the core findings in this introduction, we employ the less abstract but equivalent concept of dynamic complementarity systems \cite{Brogliato2006}.

Dynamic complementarity systems emerge when inequality constraints are incorporated into differential equations using Lagrangian multipliers. To formulate such a system, one requires a vector field $f: \mathbb{R}^d \to \mathbb{R}^d$ and constraint functions $g_1,\dots,g_m : \mathbb{R}^d \to \mathbb{R}$ which introduce the conditions $g_j(x) \geq 0$.  The resulting dynamic complementarity system reads
\begin{align} \label{eq:intro:compl_dyn}
    \dot{x} &= f(x) + \sum_{j=1}^m \lambda_j \nabla g_j(x),   
    \\ 
    \label{eq:intro:compl_cond}
    g_j(x) & \geq 0, 
    \quad \lambda_j \geq 0, 
    \quad g_j(x) \lambda_j = 0
    \quad \text{for all } 1 \leq j \leq m,
\end{align}
where $\dot{x}$ is the time-derivative and $\lambda_1,\dots,\lambda_m$ denote the Lagrangian multipliers for the constraints $g_j(x) \geq 0$. The overdamped motion of particles with volume exclusion serves as an illustrative example, as shown in \cref{fig:intro:particles}.

To solve dynamic complementarity systems numerically, one often couples approximations of \cref{eq:intro:compl_cond} with discretizations of \cref{eq:intro:compl_dyn}. The details of deriving time-stepping methods can be found in \cite{Acary2008}. A useful representation of these methods involves orthogonal projections, simplifying the notation and facilitating analysis. This leads us to define feasible sets $S_j$ and $S$ as follows:
\begin{align}
    S_{j} \coloneqq \{ x \in \mathbb{R}^d \mid g_j(x) \geq 0 \}
    \quad \text{and} \quad 
    S = \bigcap_{j=1}^m S_j.
\end{align}
In this article, the general assumption is that computing $P_{S_j}$ is relatively inexpensive. This tends to hold true especially when the constraint functions $g_j$ resemble distance functions, as is often the case with particles subject to volume exclusion. In situations where the projection is not explicitly available, one could employ a single iteration of a local Gauss-Newton method to approximate it \cite{Deuflhard2015}. However, this approach falls outside the scope of our current discussion for the sake of simplicity.

A fundamental time-stepping method for non-smooth dynamics is the Moreau-Euler method \cite{Moreau1999}
\begin{align}
    x_{k+1} = P_S(x_k + h f(x_k))
    \label{eq:intro:moreau}
\end{align}
where $h > 0$ is the time-step size and $x_k$ is the state at the previous time-step.
Although the method is commonly used to prove the well-posedness of \cref{eq:intro:compl_dyn,eq:intro:compl_cond}, practical computation demands approximations of the projection $P_S$ which is difficult, especially when the sets $S_j$ are non-convex.

An existing approach for approximating projections onto $S = \bigcap_{j=1}^m S_j$ goes back to von Neumann's alternating projection scheme, which is related to the nonlinear projected Gauss-Seidel method (NPGS). Using projections, the resulting time-stepping scheme reads
\begin{align}
    x_{k+1} = \Phi_h^{\mathrm{NPGS}}(x_k) = \left(P^{\mathrm{itr}}_S \right)^{n_k}(x_k + h f(x_k) ) 
    \quad 
    \text{where}
    \quad
    P^{\mathrm{itr}}_S \coloneqq P_{S_m} \circ \dots \circ P_{S_1}.
\end{align}
Here, $n_k$ is decided by a stopping criterion in each iteration. Such a criterion is crucial for obtaining estimates of projection errors, which is a key component for proving convergence \cite{Wu2020}.

\begin{wrapfigure}{r}{0.3\textwidth}
    \centering
    \includegraphics[width=0.28\textwidth]{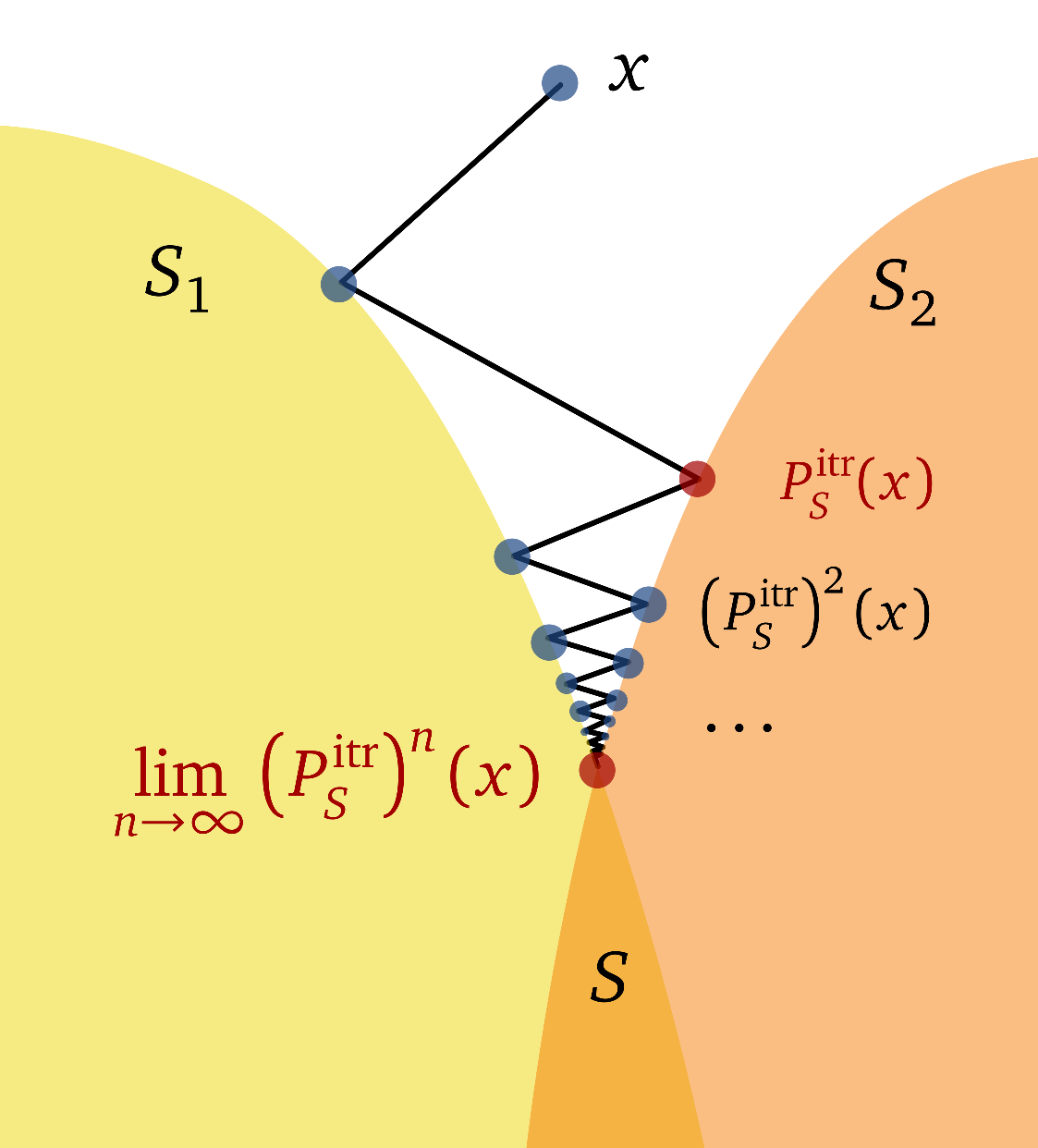}
    \caption{Comparison between one step of the alternating projections ($P_S^{\mathrm{itr}})(x)$ and the limit of repeated application. Unlike the NPGS method, the PBD uses only repetition, which is faster but less precise.}
    \label{fig:intro:alternating}
\end{wrapfigure}
Position-based dynamics (PBD) emerges as a specific form of NPGS where the iteration is truncated after one step ($n_k = 1$). Mathematically, this is described as:
\begin{align}
    x_{k+1} = \Phi_h^{\mathrm{PBD}}(x_k) = P_{S_m} \circ \dots \circ P_{S_1}(x_k + h f(x_k)).
    \label{eq:intro:PBD}
\end{align}

The choice of early termination of NPGS compromises existing convergence proofs since the error estimates afforded by the stopping criterion are no longer applicable. As demonstrated in \cref{fig:intro:alternating}, the error $\Vert P_S^{\mathrm{itr}}(x) - P_S(x) \Vert$ can indeed be significant when using this truncated approach.

\subsection*{Main results}

Our main result, as stated in \cref{thm:pbd}, establishes the uniform convergence of the numerical approximation using position-based dynamics (PBD) as per \cref{eq:intro:PBD}. 
Denoting the exact solutions of  \cref{eq:intro:compl_dyn,eq:intro:compl_cond} as $x: [0,T] \to \mathbb{R}^d$, we prove for $T>0$ the global convergence
\begin{align} 
\sup_{1 \leq k \leq \frac{T}{h}} \Vert x_{k+1} - x(kh) \Vert \to 0
\quad 
\text{as } h \to 0.
\end{align}
The foundational assumptions that make this result possible are discussed in depth in \cref{sec:assumptions}. Notably, these assumptions mirror typical conditions essential for the well-posedness of the complementary system as defined by \cref{eq:intro:compl_dyn,eq:intro:compl_cond}.

In our numerical analysis, we introduce a new notion of numerical consistency specifically tailored for PBD. Our definition just requires convergence in the sense of scalarly upper semicontinuity \cite{Edmond2006,Adly2016}. Leveraging this concept, we establish the numerical consistency of PBD by examining the properties of projection directions and their associated normal cones, as elucidated in \cref{cor:subdiff_sum_susc}.

The cornerstone to proving the stability of PBD  (\cref{lem:iter-proj_error})  is the new estimate 
\begin{align}
    \Vert P_S(x) - P_S^{\mathrm{itr}}(x) \Vert \leq L_P \Vert P_S(x) - x \Vert  
    \quad \text{for all } x \in \mathbb{R}^d \text{ with } d_S(x) < \varepsilon
\end{align}
where $L_P < 1$ is a constant, $d_S(x)$ denotes the distance from $x$ to the set $S$ and $\varepsilon > 0$ is a constant depending exclusively on the sets $S_1,\dots,S_m$. This estimate differs from existing results as in \cite{Luke2008,Lewis2009,Ye2021} by avoiding the typical three-point recursion argument, making it more generally applicable, in particular for proving the stability of PBD.

Furthermore, we adapt the well-posedness proof from \cite{Edmond2006,Bernicot2010} to show that our notion of numerical consistency and stability results in uniform convergence to the exact solution, as stated in \cref{thm:abstract_convergence}. This convergence theorem applies in particular to PBD, but it also extends to methodologies like the extended PBD (XPBD) \cite{Macklin2016} and potentially other related schemes.

In comparing our work with the PBD formulation commonly used in computer graphics \cite{Mueller2006,Macklin2019}, it is crucial to note two important modifications. First, our focus narrows to first-order dynamics, a choice driven by our interest in mathematical biology applications. We explicitly state that our results do not include second-order systems. Second, we operate under the assumption that the projections $P_{S_j}$ are computationally trivial, which simplifies the mathematical framework by eliminating the need to account for additional errors in approximating $P_{S_j}$. Notably, this assumption is particularly relevant when the constraints essentially function as distance functions $d_{S_j}$, which is the case for particle simulations with volume exclusion. In such cases, the projection $P_{S_j}$ coincides with a single iteration of the local Gauss-Newton method as in \cref{eq:intro:PBD}.

\subsection*{Structure of the article}

In \cref{sec:prelim}, we summarize the basic theory of differential inclusions (\cref{def:differential_inclusion}) on prox-regular sets (\cref{def:prox-regularity}), including existence theory (\cref{thm:abstract_existence}). In  \cref{sec:assumptions}, we summarise the assumptions needed for the convergence of PBD. In \cref{sec:numerical_analysis}, we introduce our notion of numerical consistency (\cref{def:consistency}) and stability (\cref{def:stability})  and state an abstract theorem for numerical methods for differential inclusions that guarantees convergence when the method is consistent and stable (\cref{thm:abstract_convergence}). The proof of this theorem is postponed to \cref{app:conv_proof}: the proof follows classical steps taken from \cite{Edmond2006,Bernicot2010}, but is presented for completeness.  \cref{sec:main_proofs} contains the main results, namely, that the PBD method is numerically consistent and stable ( \cref{th:consistency_stability_PBD}), which implies uniform convergence of PBD (\cref{thm:pbd}) by  \cref{thm:abstract_convergence}. 
We showcase the convergence in two numerical experiments in \cref{sec:num_tests}. 
Finally, \cref{app:proof_discs_metric_calm} shows that the assumption of metric calmness is satisfied for the example of particles with volume exclusion.

\section{Preliminaries}
\label{sec:prelim}

In this section, we collect the necessary mathematical framework for the analysis of differential inclusions on prox-regular sets.

We use the following notation: For a set $A \subset \mathbb{R}^d$, we denote the point-set distance to $x\in \mathbb{R}^d$ as $d_A(x) \coloneqq \inf_{a \in A} \Vert x - a \Vert,$ and we denote the projection as $P_A(x)~\coloneqq~\mathop{\mathrm{arg\,min}}_{a \in A} \Vert x - a \Vert$. In general, the projection is a set-valued map $P_A : \mathbb{R}^d \to 2^A$ where $2^A$ denotes the power set.
The ball around zero with radius $r>0$ is $B_r \coloneqq \{ x \in \mathrm{R}^d \mid \Vert x \Vert < r\}$ and we use the Minkowsky sum for the summation of sets, e.g., $A + B \coloneqq \{ a + b \mid a \in A, b \in B\}$.

We consider dynamics with a constraint of the form $x \in S$ where $S = \bigcap_{j=1}^m S_j$ for sets $S_1,\dots,S_m \subset \mathbb{R}^d$. As mentioned in the introduction, in many applications, the feasible set $S$ is not convex. However, for important cases, the sets $S_j$ and $S$ are uniformly prox-regular, which is a generalization of convexity, see the following definition and \cref{fig:prox_regular}.

\begin{figure}[h]
    \centering
    \includegraphics[width=0.5\linewidth]{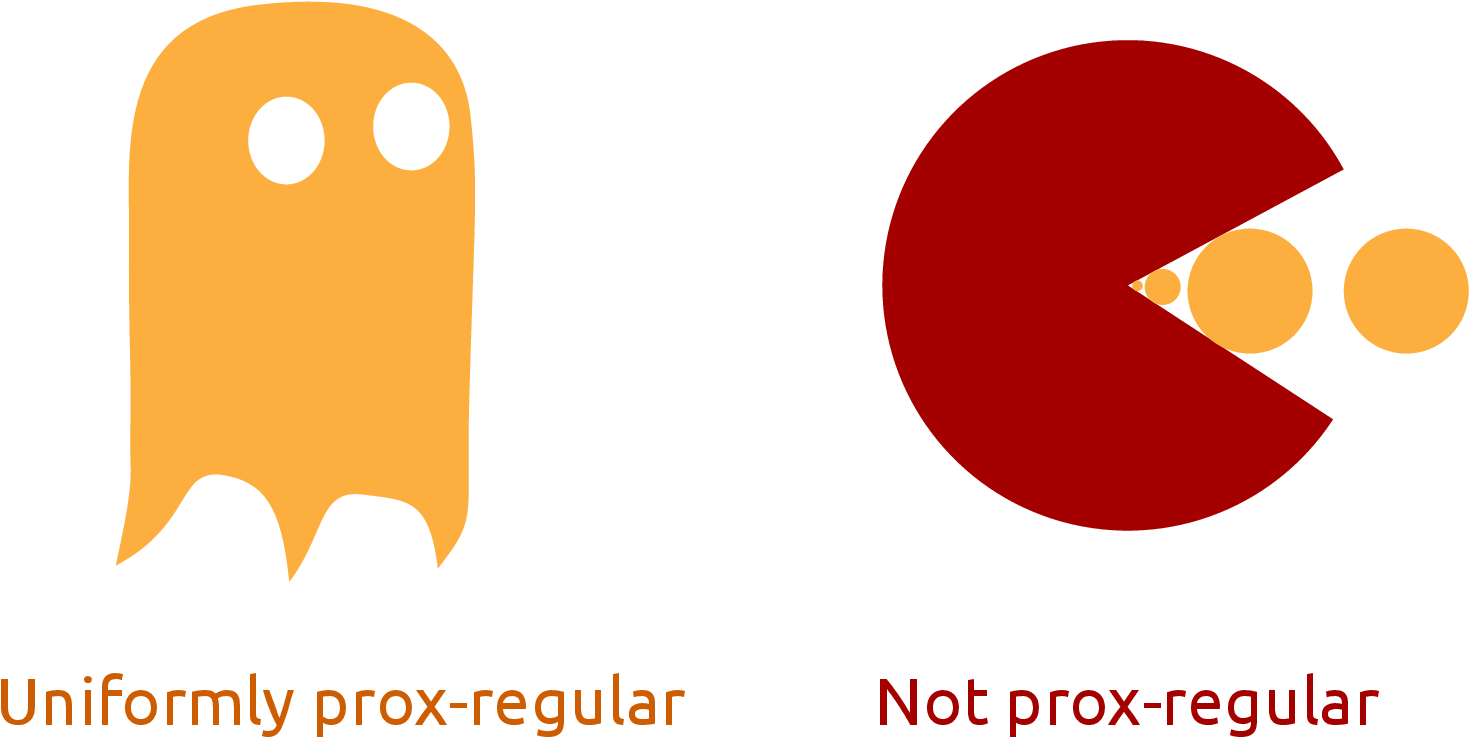}
    \caption{Example for a uniformly prox-regular set (left) and a set which fails to be prox-regular (right). The second set fails to have unique projections for points close to the center. Generally, prox-regular sets cannot have corners with an angle larger than $180^\circ$ (measured from the inside of the domain).}
    \label{fig:prox_regular}
\end{figure}

\begin{definition}[Uniform prox-regular sets and proximal normal cones]
\label{def:prox-regularity}
    Given $\eta > 0$, a set $S \subseteq \mathbb{R}^d$ is called $\eta$-prox-regular, if projections are unique over $S + B_\eta$, i.e., 
\begin{align}
P_S(x) \text{ is single-valued for all } x \in S + B_{\eta}.
\end{align}
    If a set $S$ is $\eta$-prox-regular for some $\eta > 0$, we also call it uniformly prox-regular.
Moreover, we define the proximal normal cone of a uniformly prox-regular set $S$ as
    \begin{align}
    N(S,x) = \{
    v \in \mathbb{R}^d \mid x \in P_S(x + \alpha v ) \text{ for some } \alpha > 0
    \}.
\end{align}
\end{definition}
An intuitive condition for $\eta$-prox-regularity is called the rolling-ball condition, which states that one could roll a ball radius smaller  $\eta$ the entire boundary of the set without it touching two points of the set in any instance \cite{Venel2011}. 

We note that for a non-empty, $\eta$-prox-regular set $S$ and $x \in S + B_{\eta}$ the definition of proximal normal cones implies that
\begin{align}\label{eq:x_Psx_normalcone}
        x - P_S(x) \in N(S, P_S(x))
\end{align}
where the projection is single-valued due to the prox-regularity.

\begin{figure}[ht]
    \centering
    \includegraphics[width=0.6\linewidth]{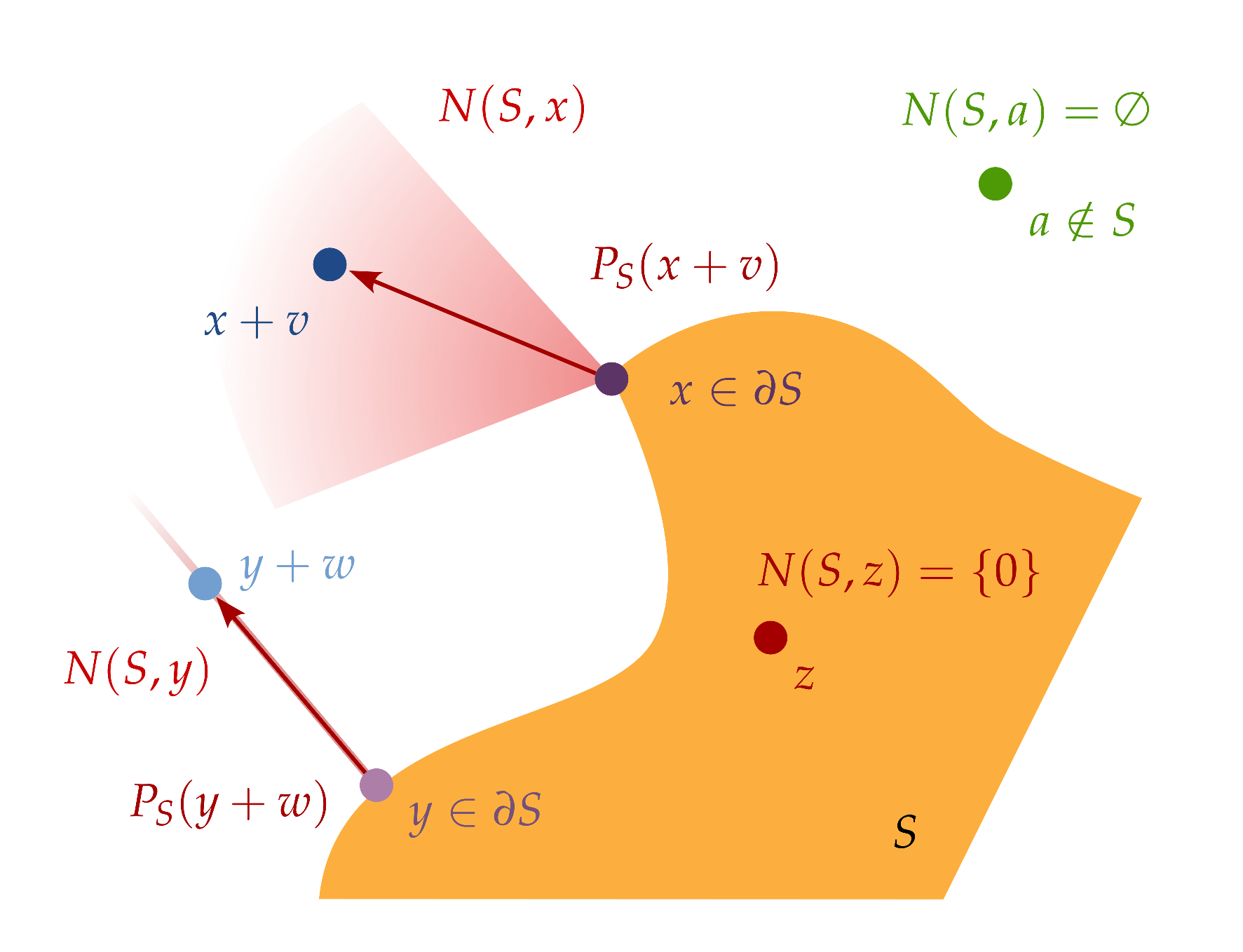}
    \caption{Examples for proximal-normal cones. Notice that the normal cone outside of $S$ is empty, and in the interior of $S$, the normal cone becomes trivial. The non-trivial cases take place only for points at the boundary of the domain $\partial S$. In all cases, the proximal normal cone forms a closed, convex cone. For boundary points where the domain is locally smooth, the proximal normal cone has just one dimension (half-line)---see point $y$ in the figure.}
    \label{fig:nc}
\end{figure}

Conditions for the prox-regularity of sets $S_j$ corresponding to the inverse image of a function (such as $g_j^{-1}([0,\infty))$) and finite intersections of sets ($S = \cap_j S_j$) are studied, for example in \cite[Section 7]{Bernard2011}. One condition that ensures prox-regularity is called metric calmness. For example, the intersection $S = \bigcap^m_j S_j$ is called \emph{metrically calm} at all points of $x \in S$, if  
there exists a constant $\gamma > 0$ (uniform in $x$) and an neighbourhood $U=U(x)$ of $x$ such that
\begin{align}
    d_S(y) \leq \gamma \sum_{j=1}^m d_{S_j}(y) \quad \text{for all } y \in U(x).
    \label{eq:def:intersection_metric_calmness}
\end{align}
If the sets $S_j$  are all $\eta$-prox-regular and the intersection is metrically calm with a constant $\gamma$ uniform in $x$, then $S$ is $\eta'$-prox-regular for $\eta' = \frac{\eta}{m \gamma}$ \cite[Theorem 7.4]{Bernard2011}. A similar statement holds for inverse images. For  details, we refer to \cite{Bernard2011} and \cite{Adly2016}. 

We note that normal cones are not preserved with respect to intersections. In general, one only has the following relation.
\begin{theorem}[{\cite[Theorem 6.42]{Rockafellar1998}}]
\label{thm:nc_intersection}
Let $A = \bigcap_{j=1}^m A_j$ for closed sets $A_j \subset \mathbb{R}^{d}$ , let $x \in \mathbb{R}^d$, then it holds that 
\begin{align}
    \sum_{j=1}^m N(A_j, x) \subseteq N(A, x).
\end{align}
\end{theorem}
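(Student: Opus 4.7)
The plan is to prove the inclusion by switching from the projection-based definition of the proximal normal cone given in \cref{def:prox-regularity} to the equivalent quadratic inequality characterisation, summing it over $j$, and then converting back.

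First I would establish the following standard reformulation: for a closed set $S \subseteq \mathbb{R}^d$ and $x \in S$, a vector $v$ lies in $N(S,x)$ if and only if there exists $\sigma \geq 0$ such that
\begin{align}
    \langle v, y - x \rangle \leq \sigma \, \Vert y - x \Vert^2 \quad \text{for all } y \in S.
\end{align}
The derivation is a direct expansion: by \cref{def:prox-regularity}, $v \in N(S,x)$ means $x \in P_S(x + \alpha v)$ for some $\alpha > 0$, i.e., $\Vert \alpha v \Vert^2 \leq \Vert y - x - \alpha v \Vert^2$ for every $y \in S$. Expanding the right-hand side and cancelling $\Vert \alpha v \Vert^2$ yields $2\alpha \langle v, y - x \rangle \leq \Vert y - x \Vert^2$, which gives the inequality with $\sigma = \frac{1}{2\alpha}$. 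Conversely, any such inequality can be rearranged to produce a valid $\alpha$.

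Next, take $v_j \in N(A_j, x)$ for $j = 1, \dots, m$. Since $N(A_j, x)$ is non-empty, the projection characterisation forces $x \in A_j$ for every $j$, hence $x \in A = \bigcap_j A_j$. Applying the reformulation to each $A_j$ gives constants $\sigma_j \geq 0$ with
\begin{align}
    \langle v_j, y - x \rangle \leq \sigma_j \, \Vert y - x \Vert^2 \quad \text{for all } y \in A_j.
\end{align}
For any $y \in A$, we have $y \in A_j$ for all $j$, so summing the $m$ inequalities yields
\begin{align}
    \left\langle \sum_{j=1}^m v_j, \, y - x \right\rangle \leq \left( \sum_{j=1}^m \sigma_j \right) \Vert y - x \Vert^2 \quad \text{for all } y \in A.
\end{align}
Applying the reformulation in the opposite direction, with $\sigma = \sum_j \sigma_j$, shows $\sum_j v_j \in N(A, x)$, completing the proof.

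The only delicate point is the equivalence between the projection definition and the quadratic inequality; once this is in hand, the inclusion is immediate by linearity of the inner product and the fact that $A \subseteq A_j$ for every $j$. I expect no genuine obstacle here, as the argument is a short direct calculation and the result is classical (\cite[Theorem 6.42]{Rockafellar1998}); the work is essentially bookkeeping of the constants $\sigma_j$.
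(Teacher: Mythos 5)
Your argument is correct. The paper does not prove this statement at all --- it imports it as a cited result from \cite{Rockafellar1998} --- so there is no in-paper proof to compare against; your self-contained derivation via the quadratic-inequality characterisation of proximal normals, $v \in N(S,x)$ iff $\langle v, y-x\rangle \leq \sigma \Vert y-x\Vert^2$ for all $y \in S$ and some $\sigma \geq 0$, is the standard route and every step checks out: the equivalence with the projection definition in \cref{def:prox-regularity} is a one-line expansion of $\Vert \alpha v\Vert^2 \leq \Vert y - x - \alpha v\Vert^2$, and the inclusion then follows by restricting each inequality to $y \in A \subseteq A_j$ and summing. You also correctly dispose of the degenerate case: if $x \notin A_j$ for some $j$ then $N(A_j,x)=\emptyset$, the Minkowski sum is empty, and the inclusion is vacuous, so one may assume $x \in A$. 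The only remark worth adding is that this elementary argument is specific to proximal (or regular) normal cones; the cited Theorem 6.42 of \cite{Rockafellar1998} concerns general normal cones and a constraint qualification for the reverse inclusion, but for the direction actually used in the paper your computation is all that is needed.
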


A concrete example of prox-regular sets which is used in various applications is that of non-overlapping spheres. 

 \begin{example}[Uniform prox-regularity of non-overlapping spheres]
    Consider $N>0$ spheres with radius $R>0$ and center $X_i \in \RR^d$, $i=1,\dots, N$. We define the feasible set $S$ of non-overlapping spheres as the intersection of the sets
    \begin{align}
    S_{ij} \coloneqq \{ (X_1,\dots,X_N) \in \mathbb{R}^{dN} \mid \Vert X_i - X_j \Vert \geq 2R \},
  \end{align}
that is
  \begin{align}
  S \coloneqq \bigcap_{i < j} S_{ij}.
  \end{align}
This intersection is metrically calm, as we prove for completeness in \cref{lem:spheres_metric_calm} and the sets $S_{ij}$ are $R$-prox-regular. Hence, $S$ is also uniformly prox-regular. Another proof for the uniform prox-regularity of this set is given in \cite[Proposition 4.5]{Venel2011} for the case $d=2$ (but without using metric calmness but an explicit formula for the prox-regularity coefficient).
\end{example}

The notion of prox-regularity allows us to reformulate dynamic complementarity problems as differential inclusions. The following definition introduces the corresponding notion of solutions.

\begin{definition}
\label{def:differential_inclusion}
Let $S$ be a subset of $\RR^d$.
For $T>0, x_0 \in S$ and $f: \mathbb{R}^d \to \mathbb{R}^d$,  we call $x: [0,T] \to \mathbb{R}^d$ a solution of the differential inclusion
\begin{align}
\dot x \in f(x) - N(S,x), \quad x(0) = x_0,  \label{eq:differential_inclusion}
\end{align}
if $x$ is absolutely continuous and satisfies
\begin{align}
    \dot x(t) &\in f(x(t)) - N(S,x(t)) \quad \text{for a.e. } t \in [0,T], \label{eq:def:main_di} \\ 
    x(0) &= x_0. \label{eq:def:main_di_init} 
\end{align}
\end{definition}

\subsection{Well-posedness of differential inclusions on prox-regular sets}

Under the right assumptions, one can show that differential inclusions \cref{eq:differential_inclusion} and dynamic complementarity problems \cref{eq:intro:compl_dyn,eq:intro:compl_cond} have exactly the same solutions \cite[Theorem 9.3.]{Adly2016}.
The advantage of differential inclusions is the rich theoretical framework from variational analysis, which allows to show well-posedness.
\begin{theorem}[Well-posedness of differential inclusions on uniformly prox-regular sets \cite{Edmond2006}]
\label{thm:abstract_existence}
Let $f$ be Lipschitz continuous, $S \subseteq \mathbb{R}^d$ be closed, non-empty and uniformly prox-regular, $T > 0$ and $x_0 \in S$,  then the differential inclusion \cref{eq:def:main_di,eq:def:main_di_init} has a unique absolutely continuous solution.
\end{theorem}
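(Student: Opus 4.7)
The plan is to follow the classical two-stage strategy of \cite{Edmond2006}: produce a candidate solution by time-discretization and then prove uniqueness using the hypomonotonicity of the proximal normal cone on uniformly prox-regular sets. Since the paper already develops this scheme in detail for PBD, the existence half can be carried out on the simpler Moreau catching-up discretization, and the uniqueness half is a short direct argument.

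For existence I would work with the scheme $x^h_{k+1}=P_S(x^h_k+h f(x^h_k))$ starting from $x^h_0=x_0$, for $h=T/n$. Using the Lipschitz bound on $f$ and the fact that $x^h_k\in S$ at every step, one gets $d_S(x^h_k+h f(x^h_k))\leq h\|f(x^h_k)\|$, so for $h$ sufficiently small the argument lies in $S+B_\eta$ and $P_S$ is single-valued by uniform prox-regularity. A discrete Grönwall argument then gives a uniform $L^\infty$ bound on $(x^h_k)$ and a uniform Lipschitz bound $\|x^h_{k+1}-x^h_k\|\leq 2h\|f(x^h_k)\|\leq Ch$ on the affine interpolant $x^h\colon[0,T]\to\mathbb{R}^d$. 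Arzelà-Ascoli yields a subsequence converging uniformly to some absolutely continuous $x\colon[0,T]\to\mathbb{R}^d$ with $x(0)=x_0$, and the derivative $\dot x^h$ converges weakly in $L^2$ up to a further subsequence. To identify the limit, I use \cref{eq:x_Psx_normalcone}, which rewrites each step as
\begin{align}
    \frac{x^h_{k+1}-x^h_k}{h}-f(x^h_k)\in -\frac{1}{h}N(S,x^h_{k+1}),
\end{align}
and then pass to the limit. Feasibility $x(t)\in S$ follows from closedness of $S$ and uniform convergence; the inclusion $\dot x(t)\in f(x(t))-N(S,x(t))$ a.e.\ follows by combining the weak limit of $\dot x^h-f(x^h)$ with the graph-closedness property of the normal cone on uniformly prox-regular sets (scalar upper semicontinuity, in the spirit of \cite{Edmond2006,Adly2016}).

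For uniqueness, let $x_1,x_2$ be two solutions with $\dot x_i(t)=f(x_i(t))-v_i(t)$, where $v_i(t)\in N(S,x_i(t))$ and $\|v_i(t)\|$ is essentially bounded by $\|f(x_i(t))\|+\|\dot x_i(t)\|$, hence by a constant $M$ on $[0,T]$. The hypomonotonicity of the proximal normal cone on $\eta$-prox-regular sets gives
\begin{align}
    \langle v_1(t)-v_2(t),\,x_1(t)-x_2(t)\rangle\geq -\frac{M}{\eta}\|x_1(t)-x_2(t)\|^2,
\end{align}
so combining with the Lipschitz estimate on $f$ yields
\begin{align}
    \frac{d}{dt}\tfrac12\|x_1-x_2\|^2\leq \Bigl(L_f+\tfrac{M}{\eta}\Bigr)\|x_1-x_2\|^2,
\end{align}
and Grönwall forces $x_1\equiv x_2$ since $x_1(0)=x_2(0)=x_0$.

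The main technical obstacle is the limit passage in the discrete inclusion. Uniform convergence of $x^h$ and weak $L^2$ convergence of $\dot x^h$ are not by themselves enough to conclude $\dot x(t)-f(x(t))\in -N(S,x(t))$ pointwise a.e., because $N(S,\cdot)$ is only upper semicontinuous in a weak sense and its values are unbounded cones. The standard fix, which I would adopt, is to use the scalar upper semicontinuity of $N(S,\cdot)$ on $\eta$-prox-regular sets together with Mazur's lemma to upgrade the weak convergence of $\dot x^h-f(x^h)$ to strong convergence of convex combinations, and then verify the inclusion using the hypomonotone characterization of $N(S,\cdot)$. Every other step is a routine compactness or Grönwall computation.
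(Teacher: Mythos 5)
Your proposal is correct and follows essentially the same route as the paper: the paper itself only cites \cite{Edmond2006} for this statement, but the discretization--compactness--Mazur--scalar-upper-semicontinuity machinery you describe for existence is precisely what appears in the paper's proof of \cref{thm:abstract_convergence} in \cref{app:conv_proof} (stated there for a general stable, consistent flow, of which the Moreau--Euler scheme is the model case), and your uniqueness argument via hypomonotonicity is exactly the one-sided Lipschitz observation the paper makes after \cref{eq:prelim:nc:hypermono}. The only nitpick is that an absolutely continuous solution a priori gives $\|v_i\|\in L^1$ rather than $L^\infty$, but Gr\"onwall with an integrable coefficient closes the uniqueness step without change.
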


The mathematical framework of this well-posedness result is the foundation for the numerical analysis in this article.  We will recall the analytical properties of the set-valued mapping
\begin{align}
x \mapsto -N(S, x),
\end{align}
which are essential for the theory. A characterizing property of $\eta$-prox-regular sets is that their associated normal cones are hypomonotone, that is, for all $x, y \in S$ we have
\begin{align}
    -\langle{v - w},{x - y}\rangle \leq \frac{\Vert v \Vert + \Vert w \Vert}{2 \eta} \Vert x - y \Vert^2
    \quad \text{for all $v \in N(S, x)$, $w \in N(S, y)$}.
    \label{eq:prelim:nc:hypermono}
\end{align}
For the special case $\eta = \infty$, we obtain $\langle{v - w},{x - y}\rangle \geq 0$, which is a characterization of convexity. In this sense, $\eta$-prox-regular sets generalise convexity.

For analysis involving normal cones, it is often sufficient to consider only vectors with bounded length, e.g., $\Vert v \Vert \leq 1$. For a uniformly prox-regular set $S$ and $x \in \mathbb{R}^d$, the intersection $N(S,x) \cap B_1$ is exactly the proximal subdifferential of the distance function, i.e.,
\begin{align}
    \partial^P d_S(x) = N(S,x) \cap B_1.
\end{align}
In this article, we will use this relation simply as a convenient shorthand notation. For the definition and analysis of proximal subdifferentials, we refer to \cite{Rockafellar1998} or \cite{Thibault2022}.

By \cref{eq:prelim:nc:hypermono}, the map $x \mapsto -\partial^P d_S(x)$ is one-sided Lipschitz continuous with constant $L = \frac{1}{\eta}$, i.e. for all $x, y \in \mathbb{R}^d$ we have
\begin{align}
    \langle v - w, x - y \rangle \leq L \Vert x - y \Vert^2
    \quad 
    \text{for all } v \in -\partial^P d_S(x) \text{ and } w \in -\partial^P d_S(y).
\end{align} 
This property leads to the uniqueness of solutions for \cref{eq:def:main_di}. 

For the existence of solutions, it is sufficient to require upper semicontinuity of $x \mapsto -\partial^P d_S(x)$. However, in this article, we will use the slightly weaker notion of scalarly upper semicontinuity, which turns out to be useful for numerical analysis.

Let us recall that a single-valued function $f: \mathbb{R} \to \mathbb{R}$ is upper semicontinuous at $x \in \mathbb{R}$ if 
\begin{align}
    \limsup_{y \to x} f(y) \leq f(x).
\end{align}
There is also a notion of upper semicontinuity for set-valued maps. However, we will use the following definition instead.
\begin{definition}[Scalarly upper semicontinuity {\cite{Edmond2006}}]
A set-valued function $\mathcal{F}: \mathbb{R}^d \to 2^{\mathbb{R}^d}$ is scalarly upper semicontinuous at $x \in \mathbb{R}^d$ if for all $v \in \mathbb{R}^d$ it holds
    \begin{align}
        \limsup_{y \to x} \sigma( \mathcal{F}(y), v) \leq \sigma( \mathcal{F}(x), v),
    \end{align}
where $\sigma$ denotes the support function which is defined as 
    \begin{align}
    \sigma(A, v) \coloneqq \sup_{w \in A} \langle w, v \rangle
    \end{align}
    for $A \subset \mathbb{R}^d$ and $v \in \mathbb{R}^d$.
\end{definition}
The support function is a common tool from convex analysis \cite{Rockafellar1998}, one of its properties is that it captures subset relations
\begin{align} \label{eq:monotonicity_support_function}
    A \subseteq B \quad \Leftrightarrow 
    \quad 
    \sigma(A, v) \leq \sigma(B, v) \quad \text{for all } v \in \mathbb{R}^d,
\end{align}
where $A, B \subseteq \mathbb{R}^d$ are convex sets. 

The next proposition shows that the notion of scalarly upper semicontinuity applies in particular to the proximal subdifferentials, see also \cref{fig:susc} for a sketch of the geometric setting.
\begin{proposition}[{\cite[Prop. 3.4 (ii)]{Adly2017}}]
\label{prop:subdiff_susc}
Let $S$ be non-empty, closed and $\eta$-prox-regular and $x \in S$, then 
\begin{align}
    \limsup_{y \to x} \sigma( -\partial^P d_S(y), w ) \leq \sigma( -\partial^P d_S(x), w )
\end{align}
for all $w \in \mathbb{R}^d$. In particular, $x \mapsto -\partial^P d_S(x)$ is scalarly upper semicontinuous.
\end{proposition}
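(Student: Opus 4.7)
The strategy is to reduce the claimed scalarly upper semicontinuity inequality to a closed-graph property of the proximal normal cone restricted to the unit ball, which is the characteristic geometric feature of $\eta$-prox-regularity. Fix $w \in \mathbb{R}^d$ and set $L := \limsup_{y \to x} \sigma(-\partial^P d_S(y), w)$; the goal is to show $L \leq \sigma(-\partial^P d_S(x), w)$.

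First, I would pick a sequence $y_n \to x$ along which $\sigma(-\partial^P d_S(y_n), w) \to L$, and for each $n$ select $v_n \in -\partial^P d_S(y_n)$ with $\langle v_n, w \rangle \geq \sigma(-\partial^P d_S(y_n), w) - 1/n$. Since $\Vert v_n \Vert \leq 1$, a subsequence (not relabeled) satisfies $v_n \to v^*$ with $\Vert v^* \Vert \leq 1$, and thus $\langle v_n, w \rangle \to \langle v^*, w \rangle$.

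The main task, and the step I expect to be the principal obstacle, is to show $v^* \in -\partial^P d_S(x)$. For $n$ large enough one has $y_n \in S + B_\eta$, so $a_n := P_S(y_n)$ is single-valued and $a_n \to x$ follows from continuity of $P_S$ near $S$. A short case check gives a unified characterization: $-v_n \in N(S, a_n) \cap B_1$ both when $y_n \in S$ (then $a_n = y_n$ and this is immediate from the definition of $-\partial^P d_S(y_n)$) and when $y_n \notin S$ (then $-v_n = (y_n - a_n)/d_S(y_n)$ is a unit outward proximal normal at $a_n$). By the realization property of $\eta$-prox-regular sets, any fixed $\tau \in (0, \eta)$ then yields $a_n = P_S(a_n - \tau v_n)$. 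Passing to the limit $n \to \infty$ and using continuity of $P_S$ on $S + B_\tau$ gives $x = P_S(x - \tau v^*)$, whence $-v^* \in N(S, x) \cap B_1 = \partial^P d_S(x)$, i.e.\ $v^* \in -\partial^P d_S(x)$.

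Assembling these pieces, $L = \lim_n \langle v_n, w \rangle = \langle v^*, w \rangle \leq \sigma(-\partial^P d_S(x), w)$, which is the desired inequality. The only nontrivial ingredient is the closed-graph argument of the previous paragraph; it relies on a uniform realization radius $\tau$ independent of $n$, which is precisely what $\eta$-prox-regularity furnishes. Everything else reduces to a routine compactness extraction.
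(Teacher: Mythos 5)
The paper offers no proof of \cref{prop:subdiff_susc} at all --- it is imported verbatim from \cite[Prop.~3.4~(ii)]{Adly2017} --- so there is nothing in-house to compare against; your argument is a correct, self-contained reconstruction of the standard closed-graph proof behind the cited result. The reduction is sound: extract $v_n\in-\partial^P d_S(y_n)$ nearly attaining the support function, pass to a convergent subsequence, and show $v^*\in-\partial^P d_S(x)$, after which the inequality is immediate. The key step is handled correctly: both cases give $-v_n\in N(S,a_n)$ with $\Vert v_n\Vert\le 1$ at $a_n=P_S(y_n)\to x$, the uniform realization $a_n=P_S(a_n-\tau v_n)$ for a fixed $\tau<\eta$ follows from the hypomonotonicity inequality \cref{eq:prelim:nc:hypermono} that the paper already records, and the limit passage uses that $P_S$ is single-valued (hence continuous, by the usual compactness argument) on $S+B_\eta$. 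Two small points deserve flagging. First, your case $y_n\notin S$ uses the genuine proximal subdifferential $\partial^P d_S(y_n)=\{(y_n-a_n)/d_S(y_n)\}$ in the tube, whereas the paper's stated shorthand $\partial^P d_S(y)=N(S,y)\cap B_1$ would make this set empty off $S$ (the paper's normal cone is empty outside $S$) and the case vacuous; your reading is the one consistent with the cited reference and with how the proposition is actually used. Second, the paper's $B_1$ is the open ball, so $N(S,x)\cap B_1$ need not contain the unit vector $-v^*$; this is harmless because the support function over $N(S,x)\cap B_1$ and over $N(S,x)\cap\overline{B_1}$ coincide (the cone is stable under positive scaling), but it is worth a sentence. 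Neither issue affects the validity of the proof.
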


\begin{figure}[h]
    \centering
    \includegraphics[width=0.5\linewidth]{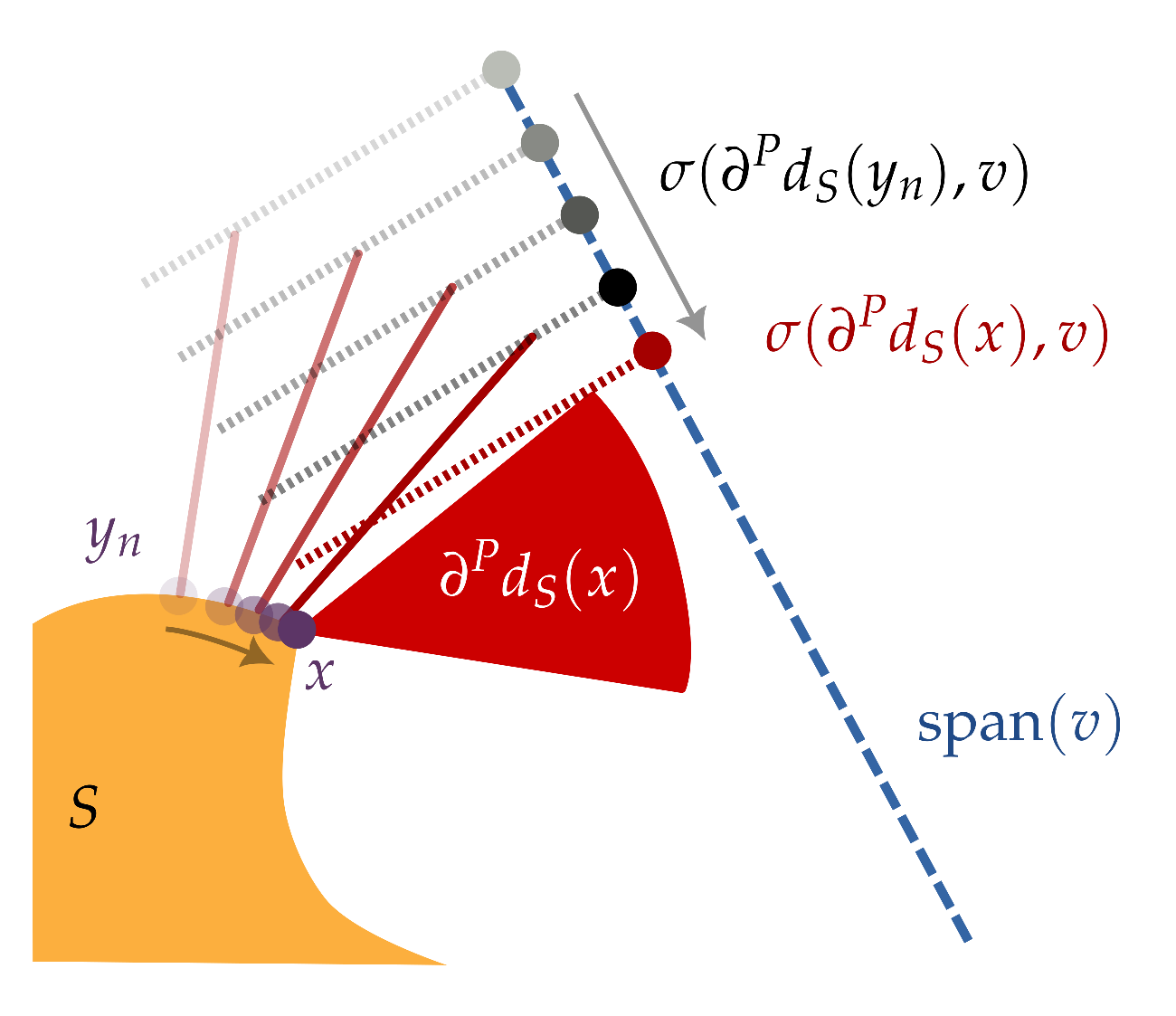}
    \caption{Example of scalarly upper semicontinuity of the proximal subdifferentials of the distance function. The convergence holds if, for all test directions $v$ (blue), the corresponding projections onto $\mathrm{span}(v)$ converge upper semicontinuously.}
    \label{fig:susc}
\end{figure}

\subsection{Assumptions}
\label{sec:assumptions}

In this article, we will take the following assumptions.

\begin{itemize}
    \item We assume that the right-hand side $f : \mathbb{R}^d \to \mathbb{R}^d$ is Lipschitz continuous and bounded, i.e. there exist constants $L_f > 0$ and $M_f > 0$ such that
\begin{align}
    \Vert f(x) - f(y) \Vert \leq L_f \Vert x - y \Vert 
    \tag{H1} \label{ass:f:lipschitz}
\end{align}
and 
\begin{align}
\Vert f(x) \Vert \leq M_f    
    \tag{H2} \label{ass:f:bounded}
\end{align}
for all $x, y \in \mathbb{R}^d$.
\item We assume for the feasible set $S \subset \mathbb{R}^d$  that
\begin{align}
    S \quad \text{is $\eta$-prox-regular, closed, and non-empty.}
    \tag{H3} \label{ass:S_prox-reg}
\end{align}
\item Moreover, there exists sets $S_1,\dots,S_m \subset \mathbb{R}^d$ such that 
\begin{align}
    S &= \bigcap_{j=1}^M S_j \quad \text{and} \tag{H4} \label{ass:S_Sj}\\ 
    S_1, \dots, S_m &\quad \text{are all $\eta$-prox-regular, closed and non-empty.} \tag{H5} \label{ass:Sj_prox-reg}
\end{align}
\item Metric calmness of the intersection: We require that there exists a constant $\alpha > 0$, such that 
\begin{align}
\inf_{x \in \mathbb{R}^d}
\max_{j = 1,\dots,m} \frac{d_{S_j}(x)}{d_S(x)} \geq \alpha.
\tag{H6}
\label{ass:metric_calm}
\end{align}
\end{itemize}

Our definition of metric calmness of the intersection is adapted from the convergence proof, but the condition is equivalent to uniform metric calmness as in \cref{eq:def:intersection_metric_calmness}.

We note that most of these assumptions coincide with the assumptions needed for well-posedness as in \cref{thm:abstract_existence}. The additional requirement of metric calmness \cref{ass:metric_calm} is not directly required for well-posedness since uniform prox-regularity of $S$ would suffice. However, metric calmness is a so-called constraint quantification condition, which also ensures that constraint systems as in \cref{eq:intro:compl_dyn,eq:intro:compl_cond} have sufficiently well-behaved normal cones. In terms of constraint functions, metric calmness is called subregularity, and it is a common constraint quantification condition \cite{Dontchev2014,Rockafellar1998,Hesse2013}.

\subsection{Numerical analysis for differential inclusions}
\label{sec:numerical_analysis}

To describe generic numerical one-step methods for the differential inclusion \cref{eq:def:main_di}, we consider their numerical flow map $\Phi_h : \mathbb{R}^d \to \mathbb{R}^d$ where $h$ denotes the time-step size. For given
initial condition $x_0 \in S$, terminal time $T > 0$ and $n \in \mathbb{N}$, we set the time-step size as $h = \frac{T}{n}$ and define the time stepping scheme as $x_{k}^{(n)} \coloneqq \Phi_h(x_{k-1}^{(n)})$ with time-steps $t_k \coloneqq hk$. The piecewise linear interpolation through the discrete time-steps yields a continuous function $x^{(n)} \in C([0,T],\mathbb{R}^d)$ which is defined by 
\begin{align}
x^{(n)}(t) \coloneqq
\frac{t - t_k}{h} x^{(n)}_{k+1}  + \frac{t_{k+1} - t}{h} x^{(n)}_k
\quad 
\text{for } t \in [t_k, t_{k=1}) \text{ and } 0 \leq k < n.
\label{eq:num_traj}
\end{align}

The following definitions provide the basis for the convergence proof of the PBD method. However, the convergence theorem we use is not restricted to the PBD method and is possibly applicable to other schemes as well.

\begin{definition}[Stability]
\label{def:stability}
We say that a numerical method has \emph{bounded constraint violation}, if there exists some $K > 0$, such that
    \begin{align}
    d_S( \Phi_h(x) ) < K h \quad \text{holds for all } x \in  S + B_{K h},\label{eq:def:feasible}
    \end{align}
provided $h > 0$ is sufficiently small.

A numerical method is \emph{stable} if it has bounded constraint violation and if for all sufficiently small $h > 0$ and any $1 \leq k \leq \frac{T}{h}$, it holds
    \begin{align}
        \Vert \Phi_h(x_k) - x_k \Vert \leq h ( A + B \Vert x_k \Vert ) \quad \mbox{for some } A, B>0,
        \label{eq:def:stability}
    \end{align}
where $x_k \coloneqq \left( \Phi_h \right)^k( x_0 )$ and $x_0 \in S$.
\end{definition}

\begin{definition}[Scalarly upper semicontinuous consistency]
\label{def:consistency}
Let us denote $h^{(n)} = \frac{T}{n}$. A numerical flow is \emph{scalarly upper semicontinuously consistent} if there exists a constant $C, K > 0$ such that for any $x^* \in S$ and all sequences $y^{(n)} \to x^* \in S$ with $d_S(y^{(n)}) < K h^{(n)}$ it holds that for all $v \in \mathbb{R}^d$
\begin{align}
        \limsup_{n \to \infty} \sigma\left( \left\{ \frac{1}{h^{(n)}} \left( \Phi_{h^{(n)}}(y^{(n)}) - y^{(n)} \right) - f(y^{(n)})\right\}, \, v\right) \leq \sigma(-C \, \partial^P d_S(x^*), \, v ).\label{eq:def:consistency}
    \end{align}
\end{definition}

\begin{remark}
The intuition for the definition \eqref{eq:def:consistency} is explained next. Our focus is to solve numerically the differential inclusion $\dot{x}  \in f(x) - N(S,x)$, which is equivalent to $\dot{x} - f(x) \in N(S,x)$. In PBD, $\dot{x} - f(x)$ is approximated by
\begin{align} \label{eq:approximation_PBD}
\dot{x} - f(x) \approx \frac{1}{h^{(n)}} \left( \Phi_{h^{(n)}}(y^{(n)}) - y^{(n)} \right) - f(y^{(n)}),
\end{align}
and the goal of the consistency relation would be to show that as $n \to \infty$ it holds that this term belongs to $ N(S,x^*)$ with $x^*$ begin the limit of the sequence $y^{(n)}$. This is shown in a weak sense using the support function $\sigma$ through the estimate \eqref{eq:def:consistency}. Indeed, by \eqref{eq:monotonicity_support_function}, we know that $A\subset B$ if and only if $\sigma(A,v) \leq \sigma(B,v)$ for all $v\in \mathbb{R}$, in particular, this means that in the limit we expect our approximation to belong to the set $-C\partial^P d_S(x^*)$, but on the other hand, we also have that
$$-C\partial^Pd_S(x^*)= -(N(S,x)\cap B_C) \subset -N(S,x).$$
\end{remark}

A theoretical example that satisfies the consistency and stability conditions is the Moreau-Euler method, which has the numerical flow 
\begin{align}
    \Phi_h^{\text{Moreau}}(x) \coloneqq P_S(x + h f(x)).
\end{align}
This method is also used in various existence proofs for solutions of differential inclusions \cite{Adly2017,Bernicot2010}. However, these proofs generalize for general numerical methods, provided that they satisfy the above consistency and stability definitions. 

\begin{theorem}
\label{thm:abstract_convergence}
We assume that \cref{ass:f:lipschitz,ass:S_prox-reg} hold and let $x : [0,T] \to \mathbb{R}^d$ be the unique solution of \cref{eq:def:main_di,eq:def:main_di_init} (given by Theorem \ref{thm:abstract_existence}).

If $\Phi_h$ a numerical flow which satisfies \cref{eq:def:feasible,eq:def:consistency,eq:def:stability},
then, the numerical approximations $x^{(n)} ~:~ [0, T] ~\to ~\mathbb{R}^d$, as defined in \cref{eq:num_traj}, satisfy
\begin{align}
   \sup_{t\in[0,T]} \Vert x^{(n)}(t) - x(t) \Vert \to 0
\end{align}
as $n \to \infty$.
\end{theorem}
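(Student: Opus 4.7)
The plan is to follow the classical consistency-plus-stability framework of \cite{Edmond2006,Bernicot2010}: use stability to extract a uniformly convergent subsequence of the interpolants $x^{(n)}$, identify its limit as a solution of the differential inclusion via the scalarly upper semicontinuous consistency, and close the argument by the uniqueness part of \cref{thm:abstract_existence}.

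I would first derive from the stability estimate \cref{eq:def:stability} a uniform bound $\Vert x_k^{(n)} \Vert \leq M$ independent of $n$ and of $0 \leq k \leq n$, via a discrete Gr\"onwall argument on the recursion $\Vert x_k^{(n)} \Vert \leq (1 + Bh) \Vert x_{k-1}^{(n)} \Vert + Ah$. Consequently the linear interpolants $x^{(n)}$ defined in \cref{eq:num_traj} are uniformly bounded and uniformly Lipschitz with constant $A + BM$. By Arzel\`a--Ascoli, a subsequence (not relabeled) converges uniformly to some Lipschitz $\bar x \in C([0,T], \mathbb{R}^d)$; the piecewise-constant derivatives $\dot x^{(n)}$, bounded in $L^\infty$, then converge weakly-$*$ along a further subsequence to $\dot{\bar x}$. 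Bounded constraint violation \cref{eq:def:feasible} gives $d_S(x^{(n)}_k) \leq \tilde K h \to 0$, so closedness of $S$ yields $\bar x(t) \in S$ for every $t$, and $\bar x(0) = x_0$.

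The core step is showing that $\bar x$ solves \cref{eq:def:main_di}. Write $v^{(n)}(t) = (1/h)(\Phi_h(x^{(n)}_k) - x^{(n)}_k)$ for $t \in [t_k, t_{k+1})$. Fix $w \in \mathbb{R}^d$ and nonnegative $\varphi \in C_c^\infty((0,T))$. For a.e.\ $t$ the nodal state $x^{(n)}_{k(t,n)}$ satisfies $d_S(x^{(n)}_{k(t,n)}) \leq \tilde K h$ and converges to $\bar x(t)$, so \cref{eq:def:consistency} applies pointwise and yields
\begin{align}
\limsup_{n\to\infty} \bigl\langle v^{(n)}(t) - f(x^{(n)}_{k(t,n)}),\, w \bigr\rangle \leq \sigma\bigl(-C\,\partial^P d_S(\bar x(t)),\, w\bigr).
\end{align}
Because the integrand on the left is dominated by $(A + BM + M_f)\Vert w\Vert$, a reverse-Fatou estimate is valid, and combining it with weak-$*$ convergence of $v^{(n)}$ and uniform convergence of $f(x^{(n)}) \to f(\bar x)$ (using \cref{ass:f:lipschitz} to absorb the $O(h)$ discrepancy between $x^{(n)}(t)$ and the nodal state $x^{(n)}_{k(t,n)}$) gives
\begin{align}
\int_0^T \varphi(t) \bigl\langle \dot{\bar x}(t) - f(\bar x(t)),\, w \bigr\rangle\, dt \leq \int_0^T \varphi(t)\, \sigma\bigl(-C\,\partial^P d_S(\bar x(t)),\, w\bigr)\, dt.
\end{align}
Varying $\varphi \geq 0$, then varying $w$ over a countable dense subset of $\mathbb{R}^d$, and applying the support-function characterization of convex set inclusion \cref{eq:monotonicity_support_function}, one obtains $\dot{\bar x}(t) - f(\bar x(t)) \in -C\,\partial^P d_S(\bar x(t)) \subseteq -N(S,\bar x(t))$ for a.e.\ $t$.

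Thus $\bar x$ is an absolutely continuous solution of \cref{eq:def:main_di,eq:def:main_di_init}; by the uniqueness part of \cref{thm:abstract_existence}, $\bar x = x$, and since every subsequence of $(x^{(n)})$ admits a further subsequence converging uniformly to the same limit, the full sequence converges. The main obstacle is the passage to the limit in the consistency inequality: the estimate \cref{eq:def:consistency} is only a pointwise \emph{limsup}, and controlling its integrated version against the weak-$*$ limit of $v^{(n)}$ requires both the reverse Fatou step above and a careful treatment of the mismatch between $x^{(n)}(t)$ and $x^{(n)}_{k(t,n)}$, which is handled by the uniform Lipschitz bound on $x^{(n)}$ together with \cref{ass:f:lipschitz}.
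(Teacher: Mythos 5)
Your proposal is correct and follows the same overall architecture as the paper's proof in \cref{app:conv_proof}: discrete Gronwall bounds from stability, compactness of the interpolants, identification of the limit through the scalarly upper semicontinuous consistency, and uniqueness from \cref{thm:abstract_existence}. The one place where you genuinely diverge is the passage to the limit in the consistency inequality. The paper converts the weak $L^1$ convergence of $\dot x^{(n)} - f^{(n)}$ into strong $L^1$ convergence of convex combinations via Mazur's lemma, extracts a further subsequence converging pointwise a.e., and then applies the consistency bound at each such $t$ and each test direction $w$ (the bound survives convex combinations because it is linear in the element being tested). You instead pair against $\varphi(t)\,w$ with $\varphi \geq 0$, invoke reverse Fatou using the uniform $L^\infty$ bound on the integrands, and recover the pointwise inclusion by varying $\varphi$ and a countable dense set of directions $w$. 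Both implementations are valid: Mazur's lemma yields the pointwise statement on a single full-measure set of $t$ valid for all $w$ simultaneously, whereas your route avoids Mazur at the cost of the $w$-dependent null sets, which you correctly close via continuity of $w \mapsto \sigma(-C\,\partial^P d_S(\bar x(t)), w)$ (a Lipschitz function of $w$ since $\partial^P d_S$ is bounded). Two further points in your favour: you make explicit that $\bar x(t) \in S$ must be established before the consistency estimate can be applied at $x^* = \bar x(t)$, and you spell out the subsequence-of-subsequence argument needed to upgrade subsequential convergence to convergence of the full sequence, both of which the paper leaves implicit.
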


The proof is given in \cref{app:conv_proof} in the appendix. The proof is an abstraction of the convergence proof of the Moreau-Euler method as in \cite{Edmond2006}.

\section{Convergence of first-order position-based dynamics}
\label{sec:num_ana}

This section is devoted to the convergence proof of position-based dynamics stated in Th. \ref{th:consistency_stability_PBD}. We first define the numerical flow of first-order PBD and then derive error estimates for the iterative projections, which are the core of the numerical method. Using the error estimates, we can show consistency in a scalarly upper semicontinuous sense and numerical stability. These properties imply convergence by \cref{thm:abstract_convergence}.

\label{sec:main_proofs}

We recall the Moreau-Euler scheme \eqref{eq:intro:moreau}, which is a convergent method by \cref{thm:abstract_convergence}.
The core of the position-based dynamics is to replace the projection $P_S$ in the Moreau-Euler method with the approximation 
\begin{align}
    P^{\mathrm{itr}}_S(q) \coloneqq P_{S_m} \circ \cdots \circ P_{S_1} (q).
\end{align}

\subsection{Estimates for projection errors}

In the following, we consider a starting point $q_0 \in \mathbb{R}^d$ (typically not too far away from $S$), and we define the iterative projection of a point $q_0$ onto the intersection $S = \cap_{j} S_j$ as 
\begin{align}
    q_j &\coloneqq P_{S_j}(q_{j-1})
    \quad 
    \text{for } 1 \leq j \leq m, \label{eq:def:qj}
    \\ 
    P^{\mathrm{itr}}_S(q_0) &\coloneqq q_m. \notag 
\end{align}
Our aim is to provide estimates for $\Vert q_0 - q_m \Vert = \Vert q_0 - P^{\mathrm{itr}}_S(q_0) \Vert$ (stability) and for $\Vert P_S(q_0) - P^{\mathrm{itr}}_S(q_0) \Vert$ (projection error). 

In the context of optimisation and convex analysis, the iterated projections are also called alternating projections, as most results are formulated for projection onto the intersection of two sets. A classical argument for the convergence of alternating projections relies on a three-point estimate \cite[Lem. 1]{Hesse2013}, which provides local convergence rates \cite[Th. 5.2]{Lewis2009}. However, we are not interested in the limit ${\lim_{n\to\infty} \Vert P_S(q_0) - (P_S^{\mathrm{itr}})^n(q_0) \Vert}$ but instead in the error after the first iteration ${\Vert P_S(q_0) - P^{\mathrm{itr}}_S(q_0) \Vert}$. Therefore, three-point estimates or other asymptotic arguments are not applicable. Instead, we use a new technique to obtain the required bounds.

We also denote the increments and projection errors as
\begin{align}
    v_j &\coloneqq q_j - q_{j-1} = P_{S_{j}}(q_{j-1}) - q_{j-1}, \label{eq:def:vj}\\
    e_j &\coloneqq q_j - P_S(q_0), \label{eq:def:ej}
\end{align}
for $j=1,\hdots,m$ (notice that we are assuming here the projections to be unique as justified by the following lemma).
\begin{remark} \label{rem:errors_and_v}
This leads, in particular, to the following relations, which will be continuously used in the sequel:
\begin{align} \label{eq:errors_and_v}
   e_j = e_{j-1} + v_j \quad \mbox{ and }\quad  -v_{j} \in N(S_j, q_j),
\end{align}
 as visualized in \cref{fig:projection_error} and explained in \eqref{eq:x_Psx_normalcone}.
\end{remark}
\begin{figure}[h]
    \centering
    \includegraphics[width=0.5\linewidth]{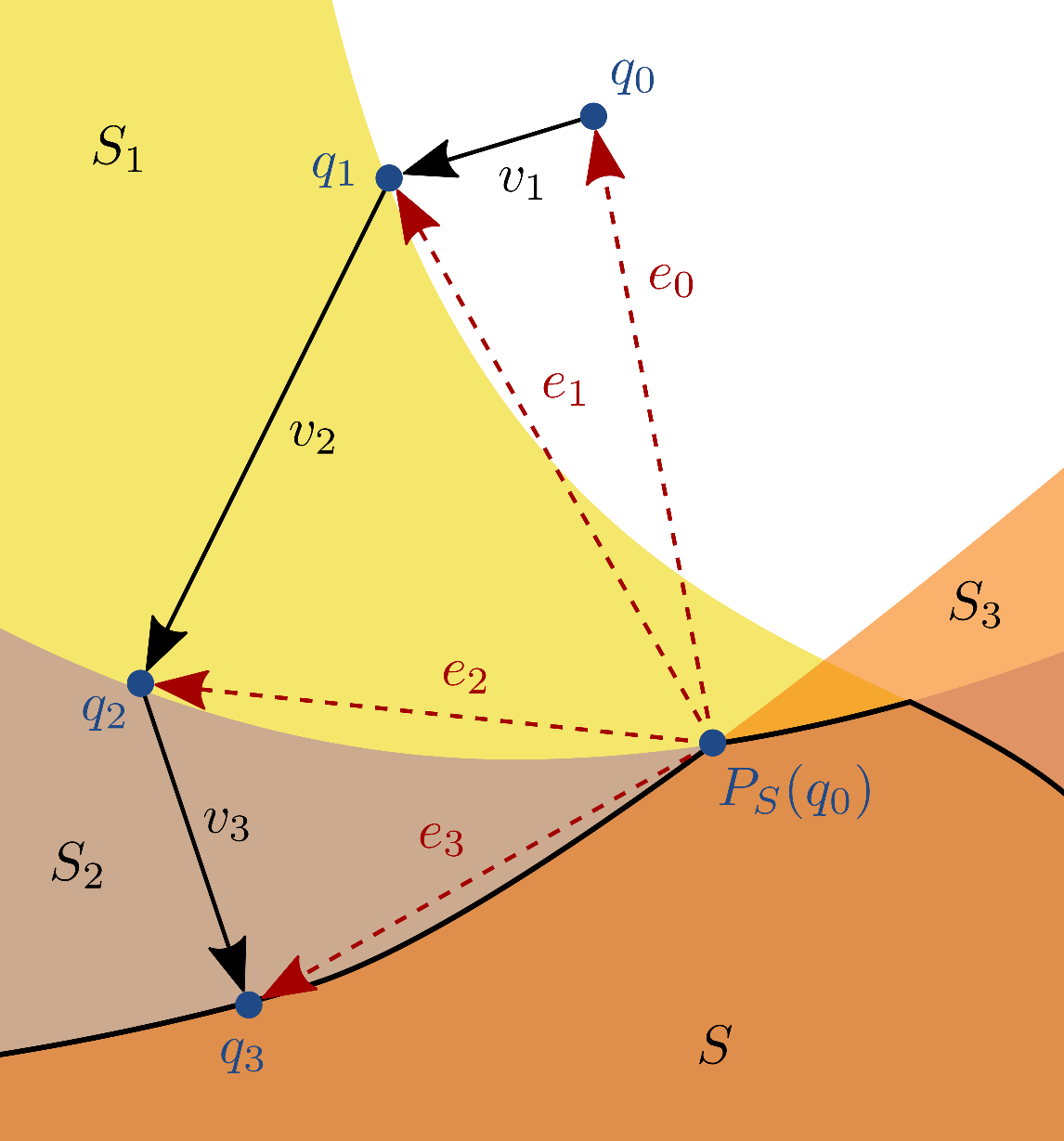}
    \caption{Sketch of iterated projections $q_0 \mapsto q_1 \dots \mapsto q_N = P^{\mathrm{itr}}(q_0)$ and the corresponding projections errors $e_j = q_j - P_S(q_0)$.
    Note that $-v_j \in N(S_j, q_j)$ which is the reason why the PBD method is consistent. However, this property alone is not sufficient to ensure the stability of PBD.}
    \label{fig:projection_error}
\end{figure}

Before we can prove the main error estimate for projections, we need a rough bound to ensure that all projections are single-valued.

\begin{lemma}[Basic error bounds.]
\label{lem:trivial-proj-bound}
Assume that \cref{ass:S_prox-reg,ass:S_Sj} hold for $S = \bigcap_{j=1}^m S_j$ and
let $\varepsilon_{\eta} ~\coloneqq~ 2^{1-m} \eta$. Then, for every $q_0 \in S + B_{\varepsilon}$ with $\varepsilon\leq \varepsilon_\eta$ we have that
\begin{align}
    q_j &\coloneqq P_{S_j}(q_{j-1}) \text{ is unique,} \\ 
    \Vert q_j - q_{j-1} \Vert &\leq 2^{j-1} d_S(q_0), \label{eq:bound_v}\\
    d_S(q_j) &\leq 2^{j} d_S(q_0), \label{eq:bound_d_S}
\end{align}
for all $1 \leq j \leq m$.
\end{lemma}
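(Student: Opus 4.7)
The plan is to prove all three statements simultaneously by induction on $j$, carrying a slightly stronger invariant than stated in the lemma, namely that $\|e_j\| = \|q_j - P_S(q_0)\| \leq 2^j\, d_S(q_0)$. The distance bound \eqref{eq:bound_d_S} then follows immediately, since $P_S(q_0) \in S$ implies $d_S(q_j) \leq \|e_j\|$. The choice of $P_S(q_0)$ (rather than $q_0$) as a reference point is the crucial trick: it lets us bound the $j$-th projection length by a quantity we already control inductively, and the factor $2^j$ appears naturally from the triangle inequality $\|e_j\| \leq \|e_{j-1}\| + \|v_j\|$ combined with $\|v_j\| \leq \|e_{j-1}\|$.

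For the base case $j = 0$, we have $\|e_0\| = d_S(q_0) = 2^0 d_S(q_0)$ by definition, and no projection is required. For the induction step, assume $q_0, \ldots, q_{j-1}$ are well-defined (unique) and satisfy $\|e_{k}\| \leq 2^{k} d_S(q_0)$ for $k \leq j-1$. Since $P_S(q_0) \in S \subseteq S_j$, it is a feasible competitor for the projection of $q_{j-1}$ onto $S_j$, so
\begin{align}
d_{S_j}(q_{j-1}) \;\leq\; \|q_{j-1} - P_S(q_0)\| \;=\; \|e_{j-1}\| \;\leq\; 2^{j-1}\, d_S(q_0).
\end{align}
Combining this with the assumption $d_S(q_0) < \varepsilon \leq \varepsilon_\eta = 2^{1-m}\eta$ gives
\begin{align}
d_{S_j}(q_{j-1}) \;<\; 2^{j-1}\cdot 2^{1-m}\eta \;=\; 2^{j-m}\eta \;\leq\; \eta,
\end{align}
since $j \leq m$. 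Hence $q_{j-1} \in S_j + B_\eta$, and by the $\eta$-prox-regularity assumption \eqref{ass:Sj_prox-reg}, the projection $q_j = P_{S_j}(q_{j-1})$ is single-valued.

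With $q_j$ well-defined, the bound $\|v_j\| = d_{S_j}(q_{j-1}) \leq 2^{j-1} d_S(q_0)$ gives \eqref{eq:bound_v}, and then $e_j = e_{j-1} + v_j$ (Remark \ref{rem:errors_and_v}) together with the triangle inequality yields
\begin{align}
\|e_j\| \;\leq\; \|e_{j-1}\| + \|v_j\| \;\leq\; 2^{j-1} d_S(q_0) + 2^{j-1} d_S(q_0) \;=\; 2^j d_S(q_0),
\end{align}
closing the induction and proving \eqref{eq:bound_d_S} via $d_S(q_j) \leq \|e_j\|$. The only real subtlety is ensuring that the accumulated errors stay inside the prox-regularity neighborhood at every step; this is exactly what the choice $\varepsilon_\eta = 2^{1-m}\eta$ is designed for, absorbing the worst-case geometric blow-up factor $2^{j-1}$ for the last index $j = m$. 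No further obstacle is expected since the argument is entirely self-contained and uses only triangle inequalities and the definition of prox-regularity.
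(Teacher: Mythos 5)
Your proof is correct and follows essentially the same inductive argument as the paper: bound the $j$-th projection length by a previously controlled error, check that this keeps $q_{j-1}$ inside the prox-regularity neighbourhood so the projection is single-valued, and double via the triangle inequality. The only (immaterial) difference is the invariant you carry — you track $\Vert q_j - P_S(q_0)\Vert$ while the paper tracks $d_S(q_j)$ directly via $d_{S_j}(q_{j-1}) \leq d_S(q_{j-1})$; your version is marginally stronger and implies the stated bound since $P_S(q_0) \in S$.
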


\begin{proof}
We prove by induction over $j$. For the induction start, we might define $S_0 \coloneqq \mathbb{R}^d$, then, the claim holds trivially for $j = 0$ with $q_{-1} \coloneqq q_0$ since $q_0 = P_{S_0}(q_{-1})$, $\Vert q_{-1} - q_0 \Vert = 0$ and $d_{S}(q_0) = 2^0 d_S(q_{0})$.

    For the induction step, we consider $q_j \in P_{S_j}(q_{j-1})$ (which may not be unique). By the induction assumption, it holds that
    \begin{align}
        \Vert q_j - q_{j-1} \Vert = d_{S_j}( q_{j-1} ) \leq d_S( q_{j-1} ) \leq 2^{j-1} d_S(q_0).
    \end{align}
In particular, this shows that the projection point $q_j$ is indeed unique since $S_j$ is $\eta$-prox-regular and
\begin{align}
    d_{S_j}(q_{j-1}) < 2^{j-1+1-m} \eta \leq\eta.
\end{align}
Moreover,  we have
\begin{align}
    d_{S}(q_j) \leq d_S(q_{j-1}) + \Vert q_{j} - q_{j-1} \Vert \leq 2^{j} d_S(q_0),
\end{align}    
which concludes the claim.
\end{proof}

\begin{lemma}[Single projection error.]
    \label{lem:single_proj}
   We assume that \cref{ass:S_prox-reg,ass:S_Sj,ass:Sj_prox-reg} hold for $S = \bigcap_{j=1}^m S_j$.  
    Let $q_0 \in \mathbb{R}^d$ and let  $e_j$, and $v_j$ be defined as in \cref{eq:def:vj,eq:def:ej} for $j=1,\hdots,m$. 
    Then, if $\Vert v_j \Vert < \eta$ and $\Vert e_{j-1} \Vert \neq 0$, it holds that
    \begin{align} \label{eq:bound_error_single_projection}
        \Vert e_j \Vert^2 \leq \left( 1 - \frac{\Vert v_j \Vert}{\eta} \right)^{-1} \left( 1 - \frac{\Vert v_j \Vert^2}{\Vert e_{j-1} \Vert^2} \right) \Vert e_{j-1} \Vert^2.
    \end{align}
The inequality remains valid if the projections $q_j$ are not unique, e.g., if one replaces \cref{eq:def:qj} with the condition $q_j \in P_{S_j}(q_{j-1})$.
\end{lemma}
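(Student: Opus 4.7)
The plan is to reduce the claimed bound to a single scalar inequality of the form $\langle v_j, e_j\rangle \leq \frac{\Vert v_j\Vert}{2\eta}\Vert e_j\Vert^2$, and then to derive that inequality from the hypomonotonicity of the proximal normal cone of the $\eta$-prox-regular set $S_j$ stated in \eqref{eq:prelim:nc:hypermono}.

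First I would rewrite the target bound algebraically. Using $e_j = e_{j-1} + v_j$ from Remark~\ref{rem:errors_and_v}, a direct expansion gives
\begin{equation*}
\Vert e_j\Vert^2 = \Vert e_{j-1}\Vert^2 + 2\langle e_{j-1},v_j\rangle + \Vert v_j\Vert^2,
\qquad
\langle e_j,v_j\rangle = \langle e_{j-1},v_j\rangle + \Vert v_j\Vert^2,
\end{equation*}
so that $2\langle e_j,v_j\rangle = \Vert e_j\Vert^2 - \Vert e_{j-1}\Vert^2 + \Vert v_j\Vert^2$. Rearranging, \eqref{eq:bound_error_single_projection} is equivalent (for $\Vert v_j\Vert < \eta$, which makes the factor $1 - \Vert v_j\Vert/\eta$ strictly positive, and $\Vert e_{j-1}\Vert\neq 0$, which makes the right-hand side well defined) to the inequality
\begin{equation*}
\langle v_j, e_j\rangle \leq \frac{\Vert v_j\Vert}{2\eta}\Vert e_j\Vert^2.
\end{equation*}

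The second step is to produce this inequality from the assumed prox-regularity. The key observation is that the projection $P_S(q_0)$ lies in $S\subseteq S_j$, and by \eqref{eq:x_Psx_normalcone} together with \cref{rem:errors_and_v} we have $-v_j\in N(S_j,q_j)$, while trivially $0\in N(S_j,P_S(q_0))$. Since $S_j$ is $\eta$-prox-regular by \cref{ass:Sj_prox-reg}, the hypomonotonicity estimate \eqref{eq:prelim:nc:hypermono} applied with $x=q_j$, $y=P_S(q_0)$, $v=-v_j$, $w=0$ yields exactly
\begin{equation*}
\langle v_j,\, q_j - P_S(q_0)\rangle \leq \frac{\Vert v_j\Vert}{2\eta}\Vert q_j - P_S(q_0)\Vert^2,
\end{equation*}
which is the desired inequality after substituting $e_j = q_j - P_S(q_0)$. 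Combining this with the algebraic rearrangement gives \eqref{eq:bound_error_single_projection}.

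For the final remark of the statement, I would note that nothing in the above argument used uniqueness of $q_j$: the relation $-v_j\in N(S_j,q_j)$ follows for \emph{any} proximal selection $q_j\in P_{S_j}(q_{j-1})$ directly from the definition of the proximal normal cone, and hypomonotonicity was invoked only for $S_j$ at the two specific points $q_j$ and $P_S(q_0)$. I do not anticipate a hard obstacle here; the only subtlety is the careful bookkeeping to ensure the hypotheses $\Vert v_j\Vert<\eta$ and $\Vert e_{j-1}\Vert\neq 0$ are used exactly where they are needed (positivity of the prefactor and well-definedness of the quotient, respectively), and that the point $P_S(q_0)$ genuinely lies in $S_j$ so that the hypomonotonicity inequality for $S_j$ applies.
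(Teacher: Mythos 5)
Your proposal is correct and follows essentially the same route as the paper's proof: the identity from $e_j = e_{j-1}+v_j$ reduces the claim to $\langle v_j,e_j\rangle \leq \frac{\Vert v_j\Vert}{2\eta}\Vert e_j\Vert^2$, which is then obtained from hypomonotonicity \eqref{eq:prelim:nc:hypermono} applied to $-v_j\in N(S_j,q_j)$ and $0\in N(S_j,P_S(q_0))$ with $P_S(q_0)\in S\subseteq S_j$. The paper presents the algebra in the reverse order (first the Pythagorean expansion, then the substitution, then factoring out $\Vert e_{j-1}\Vert^2$), but the content, including the observation that uniqueness of $q_j$ is never used, is identical.
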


\begin{proof}
For the proof, we recall remark \ref{rem:errors_and_v}.
First, applying the Pythagorean identity on $e_{j-1} = e_{j} - v_j$ yields
    \begin{align}
    \Vert e_{j-1} \Vert^2 &= \Vert e_{j} \Vert^2 - 2 \langle{ v_{j} },{ e_{j} }\rangle + \Vert v_j \Vert^2, 
    \end{align}
which is equivalent to
    \begin{align} \label{eq:aux_pyt}
         \quad \Vert e_{j} \Vert^2 &= \Vert e_{j-1} \Vert^2 + 2 \langle{ v_{j} },{ e_{j} }\rangle - \Vert v_j \Vert^2.
    \end{align}

Using the hypomonotonicity of  normal cones \cref{eq:prelim:nc:hypermono}, applied to $-v_j \in N(S_j, q_j)$ and $0 ~\in~ N(S_j, P_S(q_0) )$, we get 
    \begin{align}
        - \langle -v_j - 0 , q_j - P_S( q_0 ) \rangle = \langle v_j, e_j \rangle \leq \frac{ \Vert v_j \Vert }{2 \eta} \Vert q_j - P_S( q_0 ) \Vert^2 = \frac{\Vert v_j \Vert}{2\eta} \Vert e_j \Vert^2.
    \end{align}
Using this last estimate on \eqref{eq:aux_pyt}, we have that
\begin{align}
        \Vert e_j \Vert^2 &\leq \Vert e_{j-1} \Vert^2 + \frac{ \Vert v_j \Vert }{\eta} \Vert e_j \Vert^2 - \Vert v_j \Vert^2,
\end{align}
which implies that (since, by assumption $\Vert v_j \Vert\leq \eta$)
\begin{align}
        \Vert e_j \Vert^2 &\leq \left( 1 - \frac{\Vert v_j \Vert}{\eta} \right)^{-1} ( \Vert e_{j-1} \Vert^2 - \Vert v_j \Vert^2 ),
\end{align}
and we finally conclude the result by factoring out $\Vert e_j \Vert^2$.
We note that the proof does not require the uniqueness of the projections $P_{S_j}(q_{j-1})$.
\end{proof}

\begin{remark} \label{rem:bounds}
    Notice that the previous lemma implies, in particular, that
    $$\Vert v_j\Vert \leq \Vert e_{j-1}\Vert, \quad j=1,\hdots, m,$$
    and if $\Vert e_j\Vert\neq 0$, this inequality is strict.\\
    The key to proving that the 
    projection error $e_m$ decreases compared to $e_0$ is to show that the product of the coefficients
    $$f_j^2:=\left( 1 - \frac{\Vert v_j \Vert}{\eta} \right)^{-1} \left( 1 - \frac{\Vert v_j \Vert^2}{\Vert e_{j-1} \Vert^2} \right)$$
    is strictly smaller than one for initial data $q_0$ close enough to the admissible set $S$, i.e., for $d_S(q_0)$ small. However, notice that the  that the bound \eqref{eq:bound_v} implies that $\Vert v_j \Vert$ tends to zero if $d_S(q_0)\to 0$. And if $\Vert v_j \Vert\to 0$, then $f_j^2\to 1$, which is not a good enough bound to ensure the error reduction. To overcome this, the assumption of metric calmness for the intersection \cref{ass:metric_calm} will be crucial.
\end{remark}


\begin{figure}
    \centering
    \includegraphics[width=0.4\textwidth]{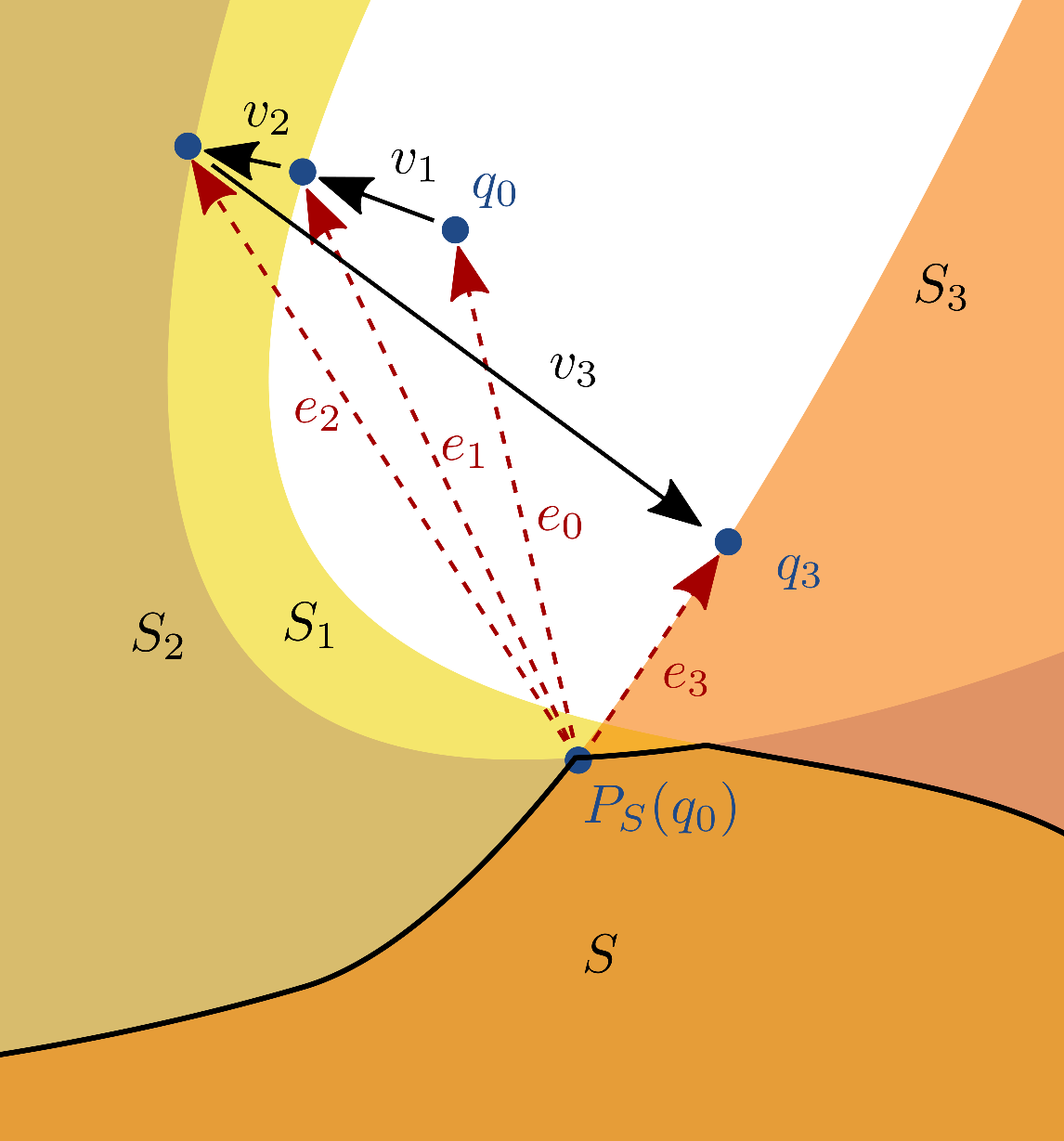}
    \caption{Example for the importance of the metric calmness of the intersection. If $d_{S_j}(q_{j-1}) = \Vert v_j \Vert$ would always be too small compared to $d_S(q_{j-1}) = \Vert e_{j-1} \Vert$, then the error could potentially increase with every projection. Metric calmness is exactly the property which bounds this ratio from below.}
    \label{fig:projection_error_proof}
\end{figure}

\begin{theorem}[Iterated projection error]
\label{lem:iter-proj_error}
    Let $S \coloneqq \bigcap_{j=1}^M S_j$ be such that \cref{ass:S_prox-reg,ass:Sj_prox-reg,ass:S_Sj,ass:metric_calm} holds. 

    Then, there exists an $\varepsilon_P > 0$ and $L_P \in (0,1)$ such that for any 
    \begin{align} 
    q_0 \in S + B_{\varepsilon_P},
    \end{align}
    we have that
    \begin{align} \label{eq:estimate_iteration}
        \Vert P^{\mathrm{itr}}_{S}( q_0 ) - P_S(q_0) \Vert \leq L_P \, d_S( q_0 ) = L_P \Vert q_0 - P_S(q_0) \Vert.
    \end{align}
\end{theorem}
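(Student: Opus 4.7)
The plan is to telescope the single-step estimate \eqref{eq:bound_error_single_projection} of \cref{lem:single_proj} into a product inequality, and then use \cref{ass:metric_calm} to guarantee that at least one factor in that product is uniformly bounded away from one. I restrict to $q_0 \in S + B_{\varepsilon_P}$ with $\varepsilon_P \in (0, \varepsilon_\eta)$ to be fixed at the end, so that \cref{lem:trivial-proj-bound} yields uniquely defined iterates with $\Vert v_j \Vert \leq 2^{j-1} d_S(q_0) < \eta$. The trivial case $\Vert e_{j-1} \Vert = 0$ is handled separately, since then $q_{j-1} = P_S(q_0) \in \bigcap_k S_k$ and all subsequent projections leave the point fixed. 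Otherwise, iterating \eqref{eq:bound_error_single_projection} gives
\begin{equation*}
\Vert e_m \Vert^2 \leq \Vert e_0 \Vert^2 \prod_{j=1}^m f_j^2, \qquad f_j^2 = \Bigl(1-\tfrac{\Vert v_j \Vert}{\eta}\Bigr)^{-1}\Bigl(1-\tfrac{\Vert v_j \Vert^2}{\Vert e_{j-1} \Vert^2}\Bigr).
\end{equation*}

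The heart of the proof, and the main obstacle as flagged in \cref{rem:bounds}, is that each $\Vert v_j \Vert$ shrinks with $d_S(q_0)$, so every factor $f_j$ tends to $1$ and the single-step bound on its own cannot produce a uniform contraction. Metric calmness is precisely the tool that rescues the argument. I claim that
\begin{equation*}
\max_{1 \leq j \leq m}\Vert v_j \Vert \geq \tfrac{\alpha}{m}\, d_S(q_0).
\end{equation*}
Using \cref{ass:metric_calm} at $q_0$, pick $k^* \in \{1, \dots, m\}$ with $d_{S_{k^*}}(q_0) \geq \alpha\, d_S(q_0)$. If the claim failed, then $\Vert v_j \Vert < (\alpha/m)\, d_S(q_0)$ for every $j$, and the $1$-Lipschitz continuity of $d_{S_{k^*}}$ together with $\Vert q_{k^*-1} - q_0 \Vert \leq \sum_{j<k^*}\Vert v_j\Vert$ would yield
\begin{equation*}
\Vert v_{k^*} \Vert = d_{S_{k^*}}(q_{k^*-1}) \geq d_{S_{k^*}}(q_0) - \sum_{j<k^*}\Vert v_j\Vert > \Bigl(\alpha - (k^*-1)\tfrac{\alpha}{m}\Bigr) d_S(q_0) \geq \tfrac{\alpha}{m}\, d_S(q_0),
\end{equation*}
contradicting the assumption applied at $j = k^*$.

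Let $j^*$ denote the index achieving this maximum. For $j \neq j^*$ I use the trivial bound $f_j^2 \leq (1 - \Vert v_j\Vert/\eta)^{-1}$. For $j = j^*$, the crude estimate $\Vert e_{j^*-1}\Vert \leq 2^{m-1} d_S(q_0)$ (which follows by combining the triangle inequality with \eqref{eq:bound_v}) and the pigeonhole claim above yield the uniform lower bound $r_{j^*}^2 \geq c_0 \coloneqq \alpha^2/(m^2\, 4^{m-1})$, so $f_{j^*}^2 \leq (1 - c_0)(1 - \Vert v_{j^*}\Vert/\eta)^{-1}$. Since each $\Vert v_j\Vert/\eta$ is at most $2^{m-1}\varepsilon_P/\eta$, the cumulative correction $\prod_j (1 - \Vert v_j\Vert/\eta)^{-1}$ can be forced below $1 + c_0/2$ by choosing $\varepsilon_P$ sufficiently small. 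Multiplying everything out then gives $\prod_j f_j^2 \leq (1 - c_0)(1 + c_0/2) \leq 1 - c_0/2$, so the theorem will follow with $L_P \coloneqq \sqrt{1 - c_0/2} < 1$.
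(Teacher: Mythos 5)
Your proposal is correct and follows essentially the same route as the paper's proof: telescope \cref{lem:single_proj} into a product of factors $f_j$, then use metric calmness \cref{ass:metric_calm} to show that at least one ratio $\Vert v_{j^*}\Vert/\Vert e_{j^*-1}\Vert$ is bounded below uniformly, so that one factor is bounded away from $1$ while the remaining factors contribute only a correction that vanishes as $\varepsilon_P \to 0$. The only difference is organizational: you extract the good index via a direct pigeonhole on the absolute sizes $\Vert v_j\Vert$ combined with the a priori bound $\Vert e_{j^*-1}\Vert \leq 2^{m-1} d_S(q_0)$, yielding an explicit $\varepsilon_P$-independent constant $c_0$, whereas the paper runs the equivalent argument by contradiction on the ratios with a parameter $\beta_{\varepsilon_P}$ chosen in an interval.
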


\begin{proof}
First, we note that the statement is trivially true if $\Vert e_m \Vert = 0$. Hence, we can assume that $\Vert e_j \Vert \neq 0$ for all $1 \leq j \leq m$ since otherwise $\Vert e_m \Vert = 0$.

We define $\varepsilon_\eta = 2^{-m}\eta$, consider an arbitrary $\varepsilon ~\in ~(0,\varepsilon_\eta)$, and select an arbitrary point $q_0 ~\in~ S + B_{\varepsilon}$. By applying the iterative projection, we obtain $q_j, v_j$ and $e_j$ as in \cref{eq:def:qj,eq:def:vj,eq:def:ej}.

The condition $\varepsilon < \varepsilon_\eta$ and \cref{lem:trivial-proj-bound} ensure that all projections $q_j = P_{S_j}(q_{j-1})$ are unique, and we have
\begin{align}
    \Vert v_j \Vert \leq 2^{j-1} d_S(q_0) < 2^{j-1}\varepsilon< 2^{j-1}\varepsilon_\eta = 2^{j-1} 2^{-m} \eta \leq \frac{1}{2} \eta 
    \quad \text{for all } 1 \leq j \leq m,
    \label{eq:proof_itr:vj}
\end{align}
which makes \cref{lem:single_proj} applicable. Hence, we have that
\begin{align} \label{eq:boundej}
    \Vert e_j \Vert^2 \leq f_j^2 \Vert e_{j-1} \Vert^2 \leq \left( \prod_{k=1}^{j} f_k^2 \right) \Vert e_0 \Vert^2
    \quad 
    \text{for all } 1 \leq j \leq m,
\end{align}
where 
\begin{align}
        f_j \coloneqq \left(1 - \frac{\Vert v_{j} \Vert}{\eta}\right)^{-\frac{1}{2}} \left( 1 - \frac{\Vert v_j \Vert^2}{\Vert e_{j-1} \Vert^2} \right)^{\frac{1}{2}}.
        \label{eq:proof:fj}
    \end{align}
 In particular, it holds that
\begin{align}
    \Vert e_m \Vert  \leq \left( \prod_{k=1}^{m} f_k \right) \Vert e_0 \Vert,
\end{align}
which is equivalent to the expression \eqref{eq:estimate_iteration} that we want to prove with the constant $L_P$ given by: 
$$L_P \coloneqq \sup_{q_0 \in S + B_{\varepsilon_P}} \,\, \prod_{k=1}^{m} f_k.$$
So we are just left with proving that there exists $\varepsilon_P \in (0,\varepsilon_\eta)$ such that 
$L_P\in (0,1)$.

\medskip
To achieve this, we first obtain a rough estimate of the coefficients $f_j$. From expression \cref{eq:proof_itr:vj} we have that
\begin{align}
    \Vert v_j \Vert < 2^{m} \varepsilon
    \quad 
    \text{for all } 1 \leq j \leq m,
\end{align}
on the other hand, it holds that (recall remark \ref{rem:bounds})
\begin{align} \label{eq:rough_estimate}
    1-\frac{\Vert v_j\Vert^2}{\Vert e_{j-1}\Vert^2}\in (0,1].
\end{align}
Combining these two bounds we have that
\begin{align}
    f_j < \left( 1 - 2^m \frac{\varepsilon}{\eta} 
    \right)^{-\frac{1}{2}} 
    \eqqcolon 
     \overline{f}(\varepsilon),
     \label{eq:proof:feps}
\end{align}
(notice that $\overline f(\varepsilon)$ is real since $\varepsilon < 2^{-m} \eta$).
With this we get the rough estimate
\begin{align}\label{eq:rough_estimate_Lp}
  L_P \leq \overline f(\varepsilon)^m.  
\end{align}
Now, since  $\overline f(\varepsilon)\in [1,\infty)$ and it decreases towards 1 as $\varepsilon \to 0$, we can define $\delta_\varepsilon$ as
$$\overline f(\varepsilon)^m = 1+\delta_\varepsilon,$$
fulfilling $\delta_\varepsilon\geq 0$ decreasing towards 0 as $\varepsilon\to 0$.

Since $\overline f(\varepsilon)^m\geq 1$, the estimate \eqref{eq:rough_estimate_Lp}
 is not good enough to prove the result. To refine this estimate, we will show that, for $\varepsilon$ small enough, there exists at least one of the coefficients $f_j$ that becomes very small, making $L_P<1$. We show this next.

First, consider $\varepsilon_P\in (0, \varepsilon_\eta)$ small enough such that the interval
$$I_{\varepsilon_P}:= \left(\sqrt{1-(1+\delta_{\varepsilon_P})^{-2}}, \frac{\alpha}{m(1+\delta_{\varepsilon_P})} \right) \subset \mathbb{R}$$
is non-empty. (We remind that the constant $\alpha$ appears in assumption \cref{ass:metric_calm}). This can be achieved for $\varepsilon_P$ small enough since $\delta_{\varepsilon_P} \to 0$ as $\varepsilon_P\to 0$.
Now we pick a value $\beta=\beta(\varepsilon_P)$ within this interval, i.e.,
\begin{align} \label{eq:definition_beta}
    \beta_{\varepsilon_P}\in I_{\varepsilon_P}.
\end{align}
In particular, one can check that $\beta_{\varepsilon_P}$ satisfies the following two bounds:
\begin{align}\label{eq:proof:alpha_less_m_beta}
    \overline{f}(\varepsilon_P)^{m} \sqrt{1 - \beta_{\varepsilon_P}^2} < 1 
    \qquad \text{and} \qquad
    \beta_{\varepsilon_P} m \overline{f}(\varepsilon_P)^{m} < \alpha.
\end{align}

\medskip
Next, we will prove that 
\begin{align}
    \Vert v_{j^*} \Vert \geq \beta_{\varepsilon_P} \Vert e_{j^* - 1} \Vert, \quad \mbox{for some index } j^*\in\{1,\hdots,m\}.
    \label{eq:proof:case1}
\end{align}
If this holds true, then we can improve the estimate \eqref{eq:rough_estimate} since then
$$1-\frac{\Vert v_j\Vert^2}{\Vert e_{j-1}\Vert^2}\leq 1-\beta_{\varepsilon_P}^2,$$
which implies that
\begin{align}
    f_{j^*} < \left( 1 - 2^m \frac{\varepsilon}{\eta} 
    \right)^{\frac{-1}{2}} \left( 1 - \beta_{\varepsilon_P}^2 \right)^{\frac{1}{2}}
    \leq 
    \overline{f}(\varepsilon) \sqrt{ 1 - \beta_{\varepsilon_P}^2 }.
\end{align}
With this estimate for $f_{j^*}$ and estimate \eqref{eq:proof:feps} for $f_j$ with $j\neq j_*$ we have that 
\begin{align}
    L_P\leq \overline{f}(\varepsilon)^{m} \sqrt{ 1 - \beta_{\varepsilon_P}^2 }.
    \label{eq:proof:case1_claim}
\end{align}
Thanks to the first bound in \eqref{eq:proof:alpha_less_m_beta}, we conclude that $L_P\in (0,1)$, which concludes the result.

So, we are left with showing that indeed \eqref{eq:proof:case1} holds true. We show this by contradiction. Suppose that the opposite holds, i.e.,  
\begin{align}
    \Vert v_j \Vert < \beta_{\varepsilon_P} \Vert e_{j-1} \Vert \quad \mbox{for all }1 \leq j \leq m.
    \label{eq:proof:case2}
\end{align}

By the assumption of a metrically calm intersection \cref{ass:metric_calm},  there exists an index $\ell$ such that
\begin{align}
    \alpha d_S(q_0) \leq  d_{S_\ell}(q_0). \label{eq:case2_first_estimate}
\end{align}
We use $q_0 = q_{\ell-1} -\sum_{j=1}^{\ell-1} v_j$ and $d_{S_\ell}(q_{\ell-1}) = \Vert v_{\ell} \Vert$ (see \cref{fig:projection_error_proof}) to compute 
\begin{align}
    d_{S_\ell}(q_0) 
    &\leq d_{S_{\ell}}(q_{\ell-1}) + \sum_{j=1}^{\ell-1} \Vert v_j \Vert 
    \\
    &= \sum_{j=1}^{\ell} \Vert v_j \Vert \\ 
    &< \beta_{\varepsilon_P} \sum_{j=0}^{\ell-1} \Vert e_j \Vert \\ 
    &< \beta_{\varepsilon_P} \sum_{j=0}^{\ell-1} \overline{f}(\varepsilon)^j \Vert e_0 \Vert \\
    &\leq \beta_{\varepsilon_P} m \overline{f}(\varepsilon)^{m} \Vert e_0 \Vert
= \beta_{\varepsilon_P} m \overline{f}(\varepsilon)^{m} d_S(q_0),
    \label{eq:case2_second_estimate}
\end{align}
where in the third inequality we used \eqref{eq:proof:case2}, and in the fourth we combined \eqref{eq:boundej}  and \eqref{eq:proof:feps}.
Combining \cref{eq:case2_first_estimate,eq:case2_second_estimate}, we get the bound
\begin{align}
    \alpha < \beta m \overline{f}(\varepsilon)^m.
    \label{eq:proof:case2_alpha_greater_m_beta}
\end{align}
But this cannot be true since it violates the second bound of \eqref{eq:proof:alpha_less_m_beta}. We have reached a contradiction which shows that, indeed, \eqref{eq:proof:case1} holds.

\end{proof}

\subsection{Proof of the numerical consistency and stability}

For the proof of consistency, we need to extend \cref{prop:subdiff_susc} slightly to hold for sums of subdifferentials. 
\begin{corollary}
\label{cor:subdiff_sum_susc}
Let $S = \bigcap_{j=1}^m S_j$ be such that \cref{ass:S_prox-reg,ass:S_Sj} holds. Moreover, let $x \in S$ and $(q_j^{(n)})_{n \in \mathbb{N}}$ be sequences with $\lim_{n \to \infty} q_j^{(n)} = x$ and $q_j^{(n)} \in S_j$ for all $1 \leq j \leq m$.
Then, 
\begin{align}
    \limsup_{n \to \infty} \sigma\left( -\sum_{j=1}^m \partial^P d_{S_j}(q_j^{(n)}), w \right) \leq m \sigma( -\partial^P d_S(x), w) 
\end{align}
for all $w \in \mathbb{R}^d$. 
\end{corollary}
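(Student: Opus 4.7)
The plan is to combine three ingredients: the additivity of the support function over Minkowski sums, the scalarly upper semicontinuity of the proximal subdifferentials of each individual $d_{S_j}$ (Proposition~\ref{prop:subdiff_susc}), and the normal-cone sum rule for intersections (Theorem~\ref{thm:nc_intersection}). The main work is to go from the pointwise containment $\sum_j N(S_j,x)\subseteq N(S,x)$ to a statement about the truncated sets $\partial^P d_{S_j}(x)=N(S_j,x)\cap B_1$.

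First I would use the elementary identity $\sigma(A+B,w)=\sigma(A,w)+\sigma(B,w)$ (which also works with a global minus sign since $-(A+B)=(-A)+(-B)$) to split
\begin{align*}
\sigma\!\left(-\sum_{j=1}^m \partial^P d_{S_j}(q_j^{(n)}),\,w\right)
=\sum_{j=1}^m \sigma\!\left(-\partial^P d_{S_j}(q_j^{(n)}),\,w\right).
\end{align*}
Since $x\in S\subseteq S_j$, $q_j^{(n)}\to x$, and each $S_j$ is $\eta$-prox-regular, I apply Proposition~\ref{prop:subdiff_susc} to each term. Using the standard fact that $\limsup$ of a finite sum is bounded by the sum of the $\limsup$'s, this yields
\begin{align*}
\limsup_{n\to\infty}\sigma\!\left(-\sum_{j=1}^m\partial^P d_{S_j}(q_j^{(n)}),w\right)
\leq \sum_{j=1}^m \sigma\!\left(-\partial^P d_{S_j}(x),w\right).
\end{align*}

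The final step is to bound the right-hand side by $m\,\sigma(-\partial^P d_S(x),w)$, and this is where Theorem~\ref{thm:nc_intersection} enters. For any choice of $v_j\in\partial^P d_{S_j}(x)=N(S_j,x)\cap B_1$, the sum rule gives $\sum_j v_j\in N(S,x)$, and the triangle inequality gives $\|\sum_j v_j\|\leq m$. Because $N(S,x)$ is a cone, the rescaled vector $\frac{1}{m}\sum_j v_j$ lies in $N(S,x)\cap B_1=\partial^P d_S(x)$; equivalently,
\begin{align*}
\sum_{j=1}^m \partial^P d_{S_j}(x)\;\subseteq\; m\,\partial^P d_S(x).
\end{align*}
Taking the support function of both sides (using monotonicity \cref{eq:monotonicity_support_function}, Minkowski-sum additivity, and the positive homogeneity $\sigma(mA,w)=m\sigma(A,w)$) and then flipping signs, I obtain $\sum_j \sigma(-\partial^P d_{S_j}(x),w)\leq m\,\sigma(-\partial^P d_S(x),w)$, which chained with the previous inequality yields the claim.

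The only subtle point (and the one I would write out carefully) is the cone-rescaling argument in step three: one must exploit that normal cones are cones in order to upgrade the unbounded containment $\sum_j N(S_j,x)\subseteq N(S,x)$ into a containment between the truncated subdifferentials, with exactly the factor $m$ appearing. The rest is routine bookkeeping with support functions.
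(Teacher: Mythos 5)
Your proof is correct and follows essentially the same route as the paper: Minkowski additivity of the support function, subadditivity of $\limsup$, Proposition~\ref{prop:subdiff_susc} applied to each $S_j$, and the containment $\sum_j \partial^P d_{S_j}(x) \subseteq m\,\partial^P d_S(x)$ derived from Theorem~\ref{thm:nc_intersection}. Your explicit cone-rescaling argument for that last containment is exactly the justification the paper leaves implicit, so this is a faithful (slightly more detailed) version of the paper's own proof.
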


\begin{proof}
Let $w \in \mathbb{R}^d$ be arbitrary. By \cref{prop:subdiff_susc} we have for each $1 \leq j \leq m$ the relation
\begin{align}
    \limsup_{n\to\infty} \sigma( -\partial^P d_{S_j}(q_j^{(n)}), w ) \leq \sigma( -\partial^P d_{S_j}(x), w ).
\end{align}
Since $A \mapsto \sigma(A,w)$ is Minkowsky additive and $\limsup$ is subadditive, we get 
\begin{align}
     \limsup_{n \to \infty} \sigma\left( -\sum_{j=1}^m \partial^P d_{S_j}(q_j^{(n)}), w \right)&=
     \limsup_{n \to \infty}  \sum_{j=1}^m  \sigma\left( -\partial^P d_{S_j}(q_j^{(n)}), w \right) \\
     &\leq 
      \sum_{j=1}^m \limsup_{n \to \infty} \sigma\left( -\partial^P d_{S_j}(q_j^{(n)}), w \right) \\
      &\leq 
      \sum_{j=1}^m \sigma\left( -\partial^P d_{S_j}(x), w \right)
      \\
      &=  
      \sigma\left( -\sum_{j=1}^m \partial^P d_{S_j}(x), w \right)
      \\
      &\leq
      m \sigma( -\partial^P d_S(x), w ),
\end{align}
where we used \cref{thm:nc_intersection} in the last step, i.e., 
$\sum \partial^P d_{S_j}(x) \subseteq m \partial^P d_S(x)$ and \eqref{eq:monotonicity_support_function}.
\end{proof}

We are ready to prove the convergence of the PBD method. The next theorem shows that \cref{thm:abstract_convergence} can be applied to the PBD method.

\begin{theorem}[Consistency and stability for PBD]
\label{th:consistency_stability_PBD}
Assuming \cref{ass:f:lipschitz,ass:f:bounded,ass:S_prox-reg,ass:S_Sj,ass:Sj_prox-reg,ass:metric_calm}  hold true, then the PBD method 
\begin{align}
    \Phi_h^{\mathrm{PBD}}(x) \coloneqq P_{S_m} \circ \cdots \circ P_{S_1}( x + h f(x) )
\end{align}
 is consistent and stable in the sense of \cref{def:consistency} and \cref{def:stability}.

Explicitly stated, for $T > 0$ and an initial condition $x_0 \in S$, 
there exists $\varepsilon_0, A, C, K > 0$ such that:
\begin{itemize}
    \item For sufficiently small $h > 0$, it holds
\begin{align}
    d_S( \Phi_h^{\mathrm{PBD}}(x) ) &< K h
    \quad 
    \text{for all } x \in S + B_{K h}.
    \label{eq:main_thm:fesible}
\end{align} 
\item For sufficiently small $h > 0$, any $1 \leq k \leq \frac{T}{h}$ it holds
\begin{align}     
    \Vert \Phi_h^{\mathrm{PBD}}(x_k) - x_k \Vert &\leq h A 
    \label{eq:main_thm:stable}
\end{align}
where $x_k = \left( \Phi_h^{\mathrm{PBD}} \right)^k( x_0 )$.
\item For any $x^* \in S$, we set $h^{(n)} = \frac{T}{n}$ and consider an arbitrary sequence $y^{(n)} \to x^*$ with $d_S(y^{(n)})~ <~ K h^{(n)}$. Then, for all $v \in \mathbb{R}^d$, it holds that
\begin{align}
    \limsup_{n \to \infty} 
    \sigma
    \left( 
    \left\{ 
    \frac{1}{h^{(n)}} 
    \left(
    \Phi^{\mathrm{PBD}}_{h^{(n)}}(y^{(n)}) - y^{(n)} 
    \right)
    - f(y^{(n)})
    \right\},
    \, v
    \right) 
    \leq 
    \sigma(
    -C \partial^P d_S(x^*),
    \, v
    ).
    \label{eq:main_thm:consistent}
\end{align}
\end{itemize}
\end{theorem}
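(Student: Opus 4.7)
The plan is to verify the three requirements in order, leveraging \cref{lem:trivial-proj-bound,lem:iter-proj_error,cor:subdiff_sum_susc} respectively.

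For the bounded constraint violation \cref{eq:main_thm:fesible}, I would start from $x \in S + B_{Kh}$ and observe that $q_0 := x + hf(x)$ satisfies $d_S(q_0) \leq (K+M_f)h$ by \cref{ass:f:bounded}. Provided $h$ is small enough that $(K+M_f)h < \varepsilon_P$, \cref{lem:iter-proj_error} gives
\begin{align*}
d_S\bigl(\Phi_h^{\mathrm{PBD}}(x)\bigr) \leq \|P_S^{\mathrm{itr}}(q_0) - P_S(q_0)\| \leq L_P (K+M_f) h,
\end{align*}
and any $K > L_P M_f/(1-L_P)$ (which exists since $L_P < 1$) makes the right-hand side strictly less than $Kh$. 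Stability \cref{eq:main_thm:stable} then follows inductively: feasibility keeps $x_k \in S + B_{Kh}$, so \cref{lem:trivial-proj-bound} bounds $\sum_{j=1}^m \|v_j\| \leq (2^m-1)(K+M_f)h$, and the triangle inequality with $\|hf(x_k)\| \leq hM_f$ yields $\|\Phi_h^{\mathrm{PBD}}(x_k) - x_k\| \leq h[M_f + (2^m-1)(K+M_f)] =: Ah$.

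The core step is the consistency \cref{eq:main_thm:consistent}. Writing $q_0 = y + hf(y)$, $q_j = P_{S_j}(q_{j-1})$ and $v_j = q_j - q_{j-1}$, the identity
\begin{align*}
\frac{1}{h}\bigl(\Phi_h^{\mathrm{PBD}}(y) - y\bigr) - f(y) = \frac{1}{h}\sum_{j=1}^m v_j
\end{align*}
reduces the task to controlling the right-hand side. By \cref{rem:errors_and_v} each $-v_j \in N(S_j, q_j)$, and \cref{lem:trivial-proj-bound} gives the uniform bound $\|v_j/h\| \leq C^* := 2^{m-1}(K+M_f)$. Since $N(S_j, q_j)$ is a cone, these two facts combine to give $v_j/h \in -C^* \partial^P d_{S_j}(q_j)$ and hence $\tfrac{1}{h}\sum_{j=1}^m v_j \in -C^* \sum_{j=1}^m \partial^P d_{S_j}(q_j)$. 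For a sequence $y^{(n)} \to x^*$ with $h^{(n)} \to 0$ and $d_S(y^{(n)}) < K h^{(n)}$, the same norm bound forces $q_j^{(n)} \to x^*$, while $q_j^{(n)} \in S_j$ for $j \geq 1$ by construction; this is precisely the setting of \cref{cor:subdiff_sum_susc}. Monotonicity of $\sigma$ under set inclusion, positive homogeneity and the corollary then yield
\begin{align*}
\limsup_n \sigma\!\left(\left\{\tfrac{1}{h^{(n)}}\sum_{j=1}^m v_j^{(n)}\right\}, v\right) \leq C^* \limsup_n \sigma\!\left(-\sum_{j=1}^m \partial^P d_{S_j}(q_j^{(n)}), v\right) \leq C^* m\, \sigma\bigl(-\partial^P d_S(x^*), v\bigr),
\end{align*}
which is \cref{eq:main_thm:consistent} with $C = C^* m$.

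The main obstacle I anticipate is this last step: correctly matching sign conventions (inward vs.\ outward normals), exploiting that $N(S_j, q_j)$ is a cone to upgrade the pointwise bound on $\|v_j/h\|$ into containment in a scaled proximal subdifferential, and verifying that the convergence $q_j^{(n)} \to x^*$ together with the feasibility $q_j^{(n)} \in S_j$ puts us exactly in the hypothesis of \cref{cor:subdiff_sum_susc}. The feasibility and stability bounds are essentially computational consequences of \cref{lem:trivial-proj-bound,lem:iter-proj_error}; only the consistency argument requires this more delicate chain of inclusions and limit passages.
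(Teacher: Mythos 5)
Your proposal is correct and follows essentially the same three-step route as the paper: choose $K$ with $L_P(K+M_f)<K$ and apply \cref{lem:iter-proj_error} for the bounded constraint violation, a triangle-inequality bound for stability, and the decomposition $\frac{1}{h}(\Phi_h^{\mathrm{PBD}}(y)-y)-f(y)=\frac{1}{h}\sum_j v_j$ with $-v_j\in N(S_j,q_j)$, the cone/norm upgrade to a scaled proximal subdifferential, and \cref{cor:subdiff_sum_susc} for consistency. The only (immaterial) deviation is that you bound the stability increment via $\sum_j\Vert v_j\Vert$ from \cref{lem:trivial-proj-bound} rather than splitting through $P_S(q)$ with \cref{lem:iter-proj_error}, which just yields a different admissible constant $A$.
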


\begin{proof}
\textbf{Step 1.} We start by showing \cref{eq:main_thm:fesible}.

Let $\varepsilon_P$ denote the constant from \cref{lem:iter-proj_error} such that $\Vert P^{\mathrm{itr}}_S(x) - P_S(x) \Vert \leq L_P \, d_S(x)$ holds for all $x \in S+ B_{\varepsilon_P}$. 
We then define
\begin{align}
    h^{(n)} \coloneqq \frac{T}{n} \quad \text{for } n \in \mathbb{N}
\end{align}
and we choose a value $K \in \mathbb{N}$ such that 
\begin{align}
    L_P ( K + M_f ) < K,
    \label{eq:proof:pbd_stab:cond_K}
\end{align}
(recall that $M_f$ is given in assumption \ref{ass:f:bounded}).
This is possible since $\lim_{K \to \infty} L_P \frac{( K + M_f )}{K} = L_P < 1$.  
Next, we choose an $n_0 \in \mathbb{N}$ such that
\begin{align}
    (K + M_f) h^{(n_0)} &< \varepsilon_P \label{eq:proof:pbd_stab:cond_n_0}.
\end{align}
For any $h \in (0, h^{(n_0)})$ and any $x \in S + B_{Kh}$, we can compute with \cref{eq:proof:pbd_stab:cond_K,eq:proof:pbd_stab:cond_n_0}  and \cref{ass:f:bounded} that
\begin{align}
    d_S( x + h f(x) ) \leq d_S(x) + h \Vert f(x) \Vert < K h + M_f h < \varepsilon_P.
\end{align}
Hence, \cref{lem:iter-proj_error} is applicable to $q \coloneqq x + h f(x)$, which yields 
\begin{align}
d_S( \Phi_h^{\mathrm{PBD}}(x) ) &= 
    d_S( P^{\mathrm{itr}}_S(q) ) \\
    &= \Vert P_S(q) - P^{\mathrm{itr}}_S(q) \Vert  
    \\
    &\leq L_P \, d_S(q) \\ 
    &< L_P \, ( K h + M_f h ) \\ 
    &< K h^{(n)},
    \label{eq:main_proof:dS_eps}
\end{align}
where we applied \cref{eq:proof:pbd_stab:cond_K} in the last step. This proves \cref{eq:main_thm:fesible}.

\textbf{Step 2.} To show \cref{eq:main_thm:stable}, we select again $h \in (0, h^{(n_0)})$, $x_0 \in S$ and any index $1 \leq k \leq \frac{T}{h}$, we define $x_k = (\Phi_h^{\mathrm{PBD}})^k (x_0)$.

Since $x_0 \in S$, we can apply \cref{eq:main_proof:dS_eps} and, therefore, $d_S(\Phi_h^{\mathrm{PBD}}(x_0)) < Kh$ and by iteration
\begin{align}
    d_S( \Phi_h^{\mathrm{PBD}}( x_k ) ) < \varepsilon = K h.
\end{align}

Now, we set $q \coloneqq x_k + h f(x_k)$ and compute
\begin{align}
    \Vert \Phi_h^{\text{PBD}}(x_k) - x_k \Vert 
    &\leq
    \Vert P^{itr}_S(  q ) - P_S( q ) \Vert 
    + \Vert P_S( q ) -  q \Vert 
    + \Vert  q - x_k \Vert \\ 
    &\leq L_P \, d_S( q ) + d_S( q ) + h M_f \\
    &\leq (1+L_P) \, d_S(q) + h M_f \\
    &\leq (1 + L_P) ( h K + h M_f ) + h M_f \\ 
    &\leq h \left( (1 + L_P) (K + M_f) + M_f \right).
\end{align}
This proves the stability \cref{eq:main_thm:stable} with $A = (1+L_P)(K + M_f) + M_f$.

\textbf{Step 3.} To show \cref{eq:main_thm:consistent}, we fix $x^* \in S$, define $h^{(n)} = \frac{T}{n}$ and we consider an arbitrary sequence $y^{(n)} \to x^*$ with $d_S(y^{(n)}) < K h^{(n)}$ .

First, we denote $q_0^{(n)} \coloneqq y^{(n)} + h^{(n)} f(y^{(n)})$ and we define $q_{j}^{(n)} \coloneqq P_{S_j}( q_{j-1}^{(n)} )$ for $1 \leq j \leq m$.
We have $d_S(q_0^{(n)}) \leq d_S(y^{(n)}) + h^{(n)} M_f < (K + M_f) \, h^{(n)}$, which makes \cref{lem:trivial-proj-bound} applicable, for large enough $n$. In particular, the projections $P_{S_j}( q_{j-1}^{(n)} )$ are single-valued for large enough $n$.

By definition of the proximal normal cones, we get 
\begin{align}
    q^{(n)}_{j-1} - q^{(n)}_{j} \in N(S_j, q^{(n)}_{j} ),
\end{align}
(recall remark \ref{rem:errors_and_v}).
Moreover, \cref{lem:trivial-proj-bound} implies that for all $1 \leq j \leq m$
\begin{align}
    \Vert q_{j-1}^{(n)} - q_{j}^{(n)} \Vert &\leq 2^m d_S(q^{(n)}_0) < \tilde{C} h^{(n)},
    \quad 
    \text{for large enough } n,
\end{align}
where $\tilde{C} = 2^m (K + M_f) $,
which implies that
$$q_{j}^{(n)} \to x^* \quad \text{as } n \to \infty,$$
since $y^{(n)} \to x^*$ as $n\to \infty$.

This implies
\begin{align}
    q^{(n)}_{j-1} - q^{(n)}_{j} \in \tilde{C} h^{(n)} \, \partial^P d_{S_j}( q_{j}^{(n)} ).
\end{align}

Combining all these terms leads to 
\begin{align}
    \frac{\Phi_h^{\text{PBD}}(y^{(n)}) - y^{(n)}}{h^{(n)}} - f(y^{(n)}) 
    &= \sum_{j=1}^m \frac{ q_{j}^{(n)} - q_{j-1}^{(n)} }{h^{(n)}} 
    \\
    &\in -\tilde{C} \sum_{j=1}^m \partial^P d_{S_j}( q_{j}^{(n)} ).
\end{align}
Applying \cref{cor:subdiff_sum_susc} yields \cref{eq:main_thm:consistent} with $C = m \tilde{C}$.
\end{proof}

\begin{theorem}[Convergence of PBD]
\label{thm:pbd}
    Suppose that assumptions  \cref{ass:f:lipschitz,ass:f:bounded,ass:S_prox-reg,ass:S_Sj,ass:Sj_prox-reg,ass:metric_calm} hold (given in section \ref{sec:assumptions}), then position-based dynamics $\Phi_h^{PBD}$ is a convergent method, i.e., the numerical trajectory $x^{(n)}~:~[0,T]~\to ~\mathbb{R}^d$ computed with $\Phi_h^{\text{PBD}}$ for step size $h = \frac{T}{n}$ satisfies
\begin{align}
    \sup_{t\in[0,T]}\Vert x^{(n)}(t) - x(t) \Vert \to 0 
    \quad 
    \text{as } n \to \infty,
\end{align}
where $x^{(n)}$ is the linear interpolation defined in \eqref{eq:num_traj} and
where $x$  denotes the  solution of the differential inclusion \eqref{eq:differential_inclusion}. 
\end{theorem}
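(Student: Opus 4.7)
The plan is to derive \cref{thm:pbd} as an immediate corollary of the three results already assembled: the well-posedness result \cref{thm:abstract_existence}, the consistency-and-stability theorem \cref{th:consistency_stability_PBD} for PBD, and the abstract convergence theorem \cref{thm:abstract_convergence} for generic numerical flows. The strategy is purely to verify that the hypotheses line up in the right order.

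First, I would note that assumptions \cref{ass:f:lipschitz} and \cref{ass:S_prox-reg} alone are enough to invoke \cref{thm:abstract_existence}, yielding the unique absolutely continuous solution $x : [0,T] \to \mathbb{R}^d$ of \cref{eq:def:main_di,eq:def:main_di_init} against which we will compare the numerical trajectory. Under the full hypothesis package \cref{ass:f:lipschitz,ass:f:bounded,ass:S_prox-reg,ass:S_Sj,ass:Sj_prox-reg,ass:metric_calm} assumed in the statement, \cref{th:consistency_stability_PBD} produces constants $K, A, C > 0$ for which $\Phi_h^{\mathrm{PBD}}$ satisfies the bounded-constraint-violation estimate \cref{eq:main_thm:fesible}, the stability estimate \cref{eq:main_thm:stable} (this is \cref{eq:def:stability} with $B = 0$, so the linear factor on $\Vert x_k \Vert$ drops out), and the scalarly upper semicontinuous consistency estimate \cref{eq:main_thm:consistent}. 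These are precisely \cref{eq:def:feasible}, \cref{eq:def:stability}, and \cref{eq:def:consistency}, the three hypotheses required by \cref{thm:abstract_convergence}.

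With existence of the limit $x$ in hand and all three numerical hypotheses verified for $\Phi_h^{\mathrm{PBD}}$, a direct appeal to \cref{thm:abstract_convergence} yields $\sup_{t \in [0,T]} \Vert x^{(n)}(t) - x(t) \Vert \to 0$ as $n \to \infty$, where $x^{(n)}$ is the piecewise linear interpolant through $x^{(n)}_k = (\Phi_{h^{(n)}}^{\mathrm{PBD}})^k(x_0)$ defined in \cref{eq:num_traj}. This concludes the proof.

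There is essentially no obstacle left at this stage: the substantive mathematical work has already been carried out upstream in the iterated projection bound \cref{lem:iter-proj_error} (which converts prox-regularity plus metric calmness into a strict contraction factor $L_P < 1$), in the scalarly upper semicontinuous sum estimate \cref{cor:subdiff_sum_susc}, and in the Moreau-style compactness argument behind \cref{thm:abstract_convergence}. The only point requiring mild attention is to ensure the feasibility constant $K$ produced by \cref{th:consistency_stability_PBD} is the same $K$ appearing in the hypothesis $d_S(y^{(n)}) < K h^{(n)}$ of \cref{def:consistency}, which it is by construction, so the citation chain closes without further work.
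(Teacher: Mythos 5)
Your proposal is correct and follows exactly the paper's own argument: the paper proves \cref{thm:pbd} by citing \cref{th:consistency_stability_PBD} for consistency and stability of $\Phi_h^{\mathrm{PBD}}$ and then applying \cref{thm:abstract_convergence}. Your additional remarks (the stability estimate being \cref{eq:def:stability} with $B=0$, and the matching of the constant $K$) are sound bookkeeping that the paper leaves implicit.
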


With this result, we are finally ready to conclude the main result of this paper:
\begin{proof}[Proof of \ref{thm:pbd}]
    By theorem \ref{th:consistency_stability_PBD} we know that the PBD method is stable and consistent, so we can apply theorem \ref{thm:abstract_convergence} and the result follows.
\end{proof}
 
\section{Numerical convergence tests for PBD}
\label{sec:num_tests}

We demonstrate the convergence of the first-order PBD method with two numerical tests, and we compare the efficiency of the PBD method with the related projected nonlinear Gauss-Seidel method (PNGS), the projected Gauss-Seidel method (PGS) and simple penalizing-constraint schemes. We note that our comparison is not exhaustive, as a detailed computational comparison of PBD with other numerical methods is beyond the scope of this article. There are known acceleration techniques for the PBD method \cite{Wang2015,Francu2017}, and in general, the efficiency depends highly on the application \cite{Chen2023}. We summarise the above-mentioned numerical methods in the following section, but the reader can find extensive explanations on the numerical methods for differential equations in \cite{Acary2008} and their implementations in \cite{Acary2023}.

\subsection{Numerical methods}

Penalizing-constraint schemes are among the only explicit time-stepping schemes for differential inclusions. For the numerical comparison, we consider the scheme, which is based on the ODE
\begin{align}
    \dot{x} = 
    f(x) - \gamma \sum_{j=1}^m d_S(x) \nabla d_S(x),
\end{align}
where $\gamma$ is a numerical parameter to determine the strength of the penalty terms and $\nabla$ denotes the gradient of the distance, which is single-valued whenever $d_S(x) \neq 0$. For the comparison, we use the explicit Euler method, which is not optimal but allows us to focus solely on how solvers deal with the constraints. We denote the resulting scheme as the penalty method with parameter $\gamma$.

For our examples, the projection $P_{S_j}$ will always be explicitly known. In this setting, the PNGS-based time-stepping reads
\begin{align}
    \Phi_h^{\mathrm{PNGS}}(x) = \left( P_S^{\mathrm{itr}} \right)^{k^*}(x + h f(x)),
\end{align}
where $k^*$ in each time step is the first index satisfying the stopping criterium
\[
\Vert P^{\mathrm{itr}}_S(y) - y \Vert 
\leq \mathrm{abstol} + \Vert y \Vert \mathrm{reltol}
\quad 
\text{for }
y = \left( P^{\mathrm{itr}}_S\right)^{k^*}(x)
\label{eq:stopping_crit}
\]
where $\mathrm{abstol}, \mathrm{reltol}$ are the tolerance parameters. The convergence of this method is shown, for example, in \cite{Wu2020}.

One obtains the equivalent of the projected Gauss-Seidel methods by replacing $S$ with the set 
\begin{align}
\tilde{S}(x) \coloneqq 
\{ y \in \mathbb{R} \mid \exists v \in \partial^P d_S(x), 0 = d_S(x) + \langle v, y - x\rangle \}.
\end{align}
The resulting scheme reads
\begin{align}
     \Phi_h^{\mathrm{PNGS}}(x) = \left( P_{\tilde{S}(x + h f(x))}^{\mathrm{itr}} \right)^{k^*}(x + h f(x))
\end{align}
where $k^*$ is again determined by the stopping criterium \cref{eq:stopping_crit}.
We note that the convergence of the method $x_{k+1} = P_{\tilde{S}(x_k)}(x_k + h f(x_k))$ was studied in \cite{Venel2011}, which essentially contains the proof of \cref{thm:abstract_convergence}.

\subsection{Sliding case}
\label{sec:numtest:sliding}

The first test case is a constructed academic example with explicit solutions. Intuitively, the following system describes the motion of an overdamped particle that slides along the intersection of $N-1$ many spheres.

We construct this trajectory by considering $N-1$ spheres, which intersect exactly in a way such that the trajectory is initially following the curve
\begin{align}
    \{ x \in \mathbb{R}^d \mid x_1^2 + x_2^2 = 1^2, x_j = 0 \text{ for all } 3 \leq j \leq d \}.
\end{align}
For given dimension $d \geq 2$ and a constant $C > 0$, we define 
$R = \sqrt{1^2 + C^2}$ and
\begin{align}
    S_j &\coloneqq B_R(C e_{j+2}), 
    \quad 
    \text{for } 1 \leq j \leq d - 2 \\
    S_{d-1} &\coloneqq B_R\left( \sum_{j=3}^d C (d-2)^{-\frac{1}{2}} e_j\right),
\end{align}
where $(e_j)_{1 \leq j \leq d}$ are the canonical basis vectors of $\mathbb{R}^d$,  and we denote $S = \bigcap_j S_j$. By construction, the set $S$ is $\eta$-prox-regular with $\eta = \min(R,1)$.

The constant $C$ determines the distance of the spheres from the center, for small values of $C$ the spheres are very close, and the created valley at the intersection of the spheres is rather shallow. For large values of $C$, the created valley is very deep and promoted, which is numerically more challenging.

We consider the system
\begin{align}
    \dot x &= -e_2 - N(S,x),\\ 
    x(0) &= \sin(\alpha) e_1 + \cos(\alpha) e_2 ,
\end{align}
where $\alpha \in (0, \pi)$ determines the initial position.

This system has the following explicit solution for the sliding phase:
\begin{align}
    x_1(t) = \frac{C_2 e^t}{1 + e^{2(c_1+t)}}, \quad 
    x_2(t) = \frac{1 - e^{2(c_1+t)}}{1 + e^{2(c_1+t)}},\quad 
    x_j(t) = 0 \quad \text{for } 3 \leq j \leq d.
\end{align}
with the constants $C_1 = \frac{1}{2} \log( \frac{1 - \cos(\alpha)}{1 + \cos(\alpha)} )$ and $C_2 = \sin(\alpha) (e^{2c_1} + 1)$.
After the sliding phase, for $t > C_1$, the trajectory leaves the boundary of $S$ and becomes 
\begin{align}
    x_1(t) = 1, \quad x_2(t) = C_1 - t,
    \quad 
    x_j(t) = 0 \quad \text{for } 3 \leq j \leq d.
\end{align}

The numerical comparison is presented in \cref{fig:conv_example_valley}. Our analysis confirms the convergence of PBD for this example, and the observed order of convergence is $1$. 

Comparison between PBD and the penalty method shows that even with close to optimal penalty parameters, the PBD method outperforms penalty methods in terms of numerical error at a fixed time-step (left plot in \cref{fig:conv_example_valley}). Since both methods have similar computational cost, the PBD method is also more accurate when fixing the computational budget (right plot in \cref{fig:conv_example_valley}).

For large time stepsizes, the numerical error of the PBD method remains bounded, which is not the case for the penalty method or the PGS method, as in these methods, instability issues become accentuated. The stability of PBD at large time-steps is the main reason for its popularity in computer graphics, where instability would lead to unwanted visual artifacts.

Finally, if we compare PBD with the PNGS and PGS methods, we observe that these methods are more accurate when comparing at same time stepsizes. However, the precision-work diagram (right plot in \cref{fig:conv_example_valley}) shows that the PBD has a smaller error per computational budget.

\begin{figure}[h]
    \centering
    \includegraphics[width=\textwidth]{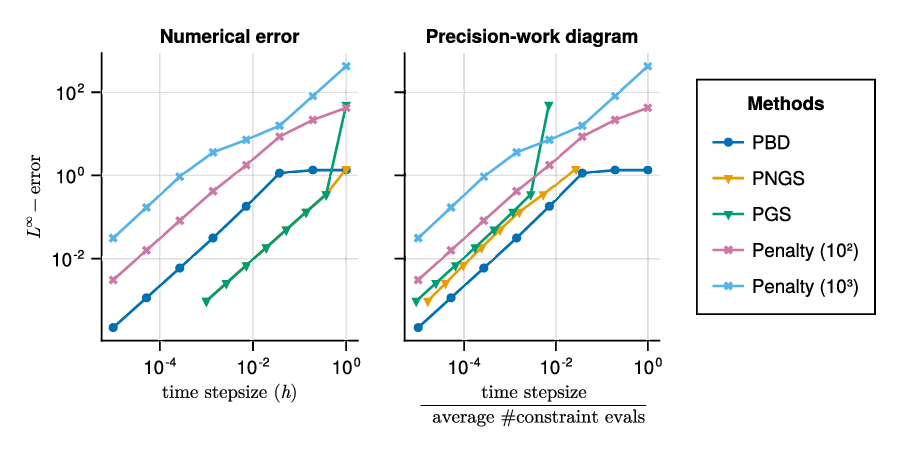}   
    \caption{\textbf{Left:} Comparison of the numerical error 
    for the test problem with parameters $C = 10, d = 3$ and $\alpha = \frac{\pi}{16}$. This shows in particular convergences of PBD for this test case. 
    \textbf{Right:} The precision-work diagram takes into account the computational budget by rescaling the $x$-axis by the average number of constraint evaluations and keeping the $y$-axis as before. This serves as a proxy for the computational runtime, with smaller values representing longer runtimes. Notably, the PBD method outperforms all other methods in terms of accuracy per computational budget, also for large timesteps.}
    \label{fig:conv_example_valley}
\end{figure}

\subsection{Non-overlapping disks}
\label{sec:numtest:nonoverlap}

The second test case considers $N$ disks that are attracted to a center and constrained within a half-space. A similar setting is considered in \cite{Venel2011,Pecol2011} with the aim of modelling crowd movement. 

This test case reflects a very common application for PBD. In NVIDIA's particle engine Flex which is part of PhysX  \cite{Macklin2014,NVIDIA2018}, non-overlapping spheres are the core of rigid body simulations and fluid dynamics, where each of these continua is represented as a collection of non-overlapping spheres with additional constraints. 
 
Our test model is defined as follows. For a given radius $R > 0$, we define the feasible sets as 
\begin{align}
    S_{ij} = \{ x \in (\mathbb{R}^2)^{N} \mid 
    \Vert x_i - x_j \Vert \geq 2R\} \quad \text{and} \quad
    S \coloneqq \bigcap_{i = 1}^N \bigcap_{j=1}^{i-1} S_{ij}.
\end{align}
The sets represent all center positions for non-overlapping disks with radius $R$. We note that the projection $P_{S_{ij}}$ is trivial to compute, whereas $P_S$ is a difficult non-convex optimization problem.

We consider the differential inclusion 
\begin{align}
    \dot{x} = -\gamma x - N(S,x),
\end{align}
where $\gamma$ determines the strength of the attraction to the center, and as an initial condition, we pick some arbitrary $x_0 \in S$.
See \cref{fig:spheres_init} for an example of the trajectories.

\begin{figure}[h]
    \centering
    \includegraphics[width=.48\linewidth]{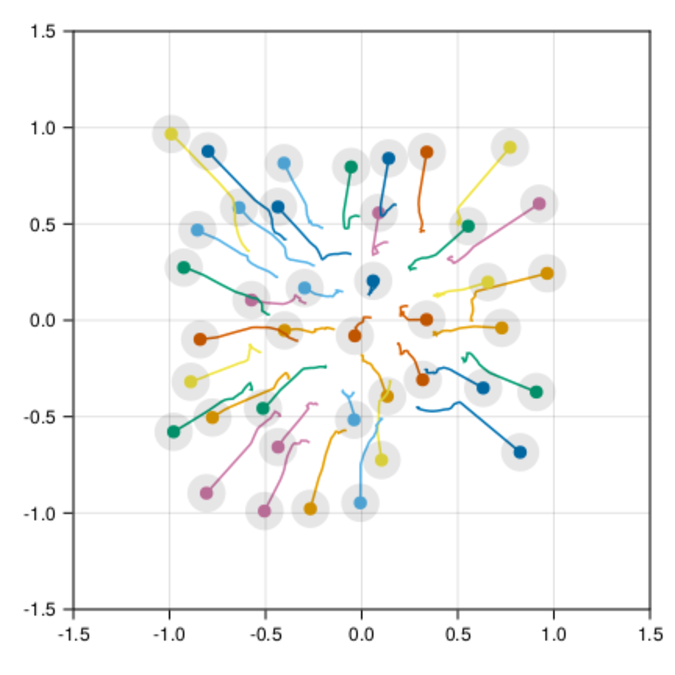}
    \hfill
    \includegraphics[width=.48\linewidth]{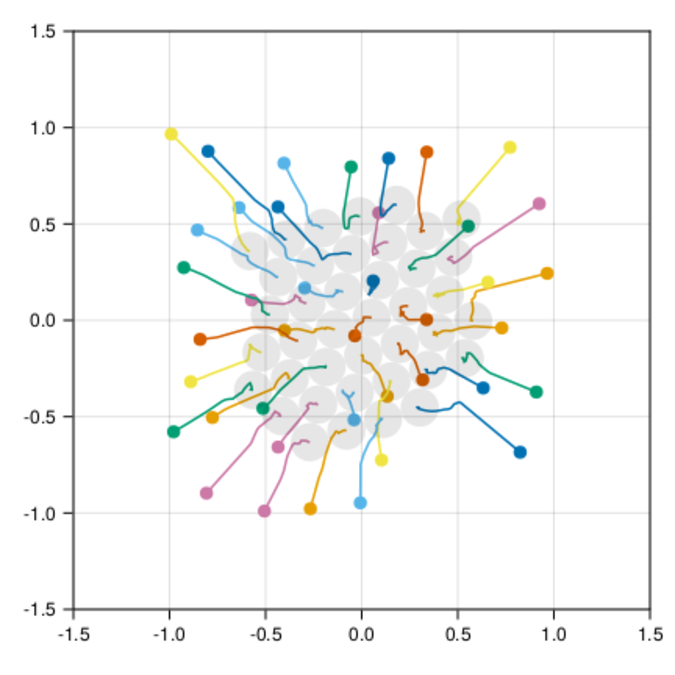}
    \caption{Initial condition (left) and terminal condition (right) of the non-overlapping disks test model for parameters $R = 0.1$, $N = 40$, $T = 4.0$, and $\gamma = 1.0$.}
    \label{fig:spheres_init}
\end{figure}

The reference solution was computed with the PNGS method with 
the parameters $\texttt{reltol} = 10^{-10}, \texttt{abstol} = 10^{-12}$ and time stepsize $h = 10^{-6}$.

We present the numerical convergence plot and precision-work diagram in \cref{fig:conv_example_spheres}.

The convergence analysis shows a similar picture as in \cref{sec:numtest:sliding}. Again, PBD performs best in the precision-work diagram, showing the smallest errors at fixed computational budgets. 

This test case displays the typical behavior of penalty methods, which is that the choice of the penalty parameter ($\gamma$) impacts the numerical error. In the case of non-overlap constraints, too small penalty parameters lead to a breakdown of the convergence already at low accuracy. In contrast, larger penalty parameters 
can sustain the convergence longer but require smaller time steps until convergence. Independent of the time stepsize and the penalty parameter, PBD shows a smaller numerical error than the penalty method.

\begin{figure}[h]
    \centering
    \includegraphics[width=\textwidth]{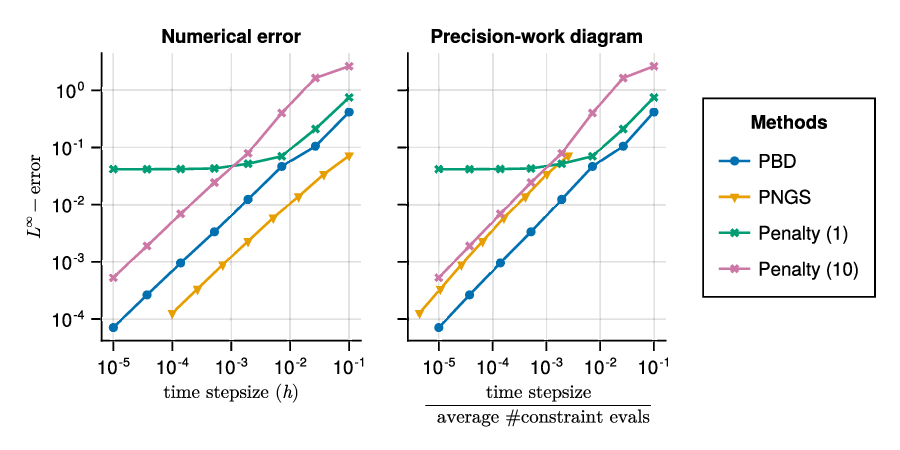}   
    \caption{
    Overall, the convergence behavior is similar to \cref{fig:conv_example_valley}. 
    \textbf{Left:}  This experiment shows again that PNGS is more accurate when one compares errors per time stepsizes. Notably, we see that convergence of the penalty methods depends on the penalty parameter and eventually breaks down, whereas PBD does not require such a numerical parameter for convergence.
    \textbf{Right:} The PBD method is again best in terms of performance per runtime in this example. }
    \label{fig:conv_example_spheres}
\end{figure}

\section{Conclusion}
\label{sec:conclusion}

This article shows, for the first time, that the fusing of force integration and constraint projection, as 
done in the position-based dynamics method, can lead to convergent methods in a rigorous mathematical sense. 
Our result is restricted to first-order dynamics, which is still applicable in many biological models or constrained gradient flow problems. 

Our proof represents an initial step for further development of numerical analysis for the PBD method. The most pressing question is how the second-order case could be treated. The analysis of second-order differential inclusions is considerably more challenging, and aspects such as impact laws lead to systems with discontinuous velocities, which require a different solution theory (using measure differential inclusions).

Also, for the first-order case, there are various next steps to investigate. For example, numerical experiments suggest that the order of convergence is $1$ but a mathematical proof is still missing. Another direction would be to allow the feasible set $S$ to change over time. The theory of differential inclusions for such so-called sweeping processes is already well-developed, and it is likely that the techniques presented in this article generalise to non-stationary feasible sets.
\begin{appendices}
\crefalias{section}{appendix}

\section*{Acknowledgement}

This research was partially supported by the Austrian Science Fund (FWF) through the  project  F65. The work of SP and SMA was funded  by the Vienna Science  and  Technology  Fund  (WWTF)  [10.47379/VRG17014].

\section{Proof of the convergence for numerical flows 
        \ifpdf\else(\cref{thm:abstract_convergence})\fi}
\label{app:conv_proof}
The following proof is a slight adaptation of the main proof in \cite{Bernicot2010,Venel2011} mixed with the notion of scalar upper semicontinuity as in \cite{Edmond2006,Adly2017}.

\begin{proof}[Proof of \cref{thm:abstract_convergence}]
In the following, we will use the notation $h = h^{(n)} = \frac{T}{n}$ for $n \in \mathbb{N}$ and $t_k = t_k^{(n)} = k h^{(n)}$.
Let $\Phi_h : \mathbb{R}^d \to \mathbb{R}^d$ be a numerical flow under the assumptions of the theorem.

The numerical approximation of system \cref{eq:def:main_di,eq:def:main_di_init} is given by
\begin{align}
    x_{k+1}^{(n)} \coloneqq \Phi_h(x_{k}^{(n)}) \quad \text{for } k > 0.
\end{align}
We denote the piece-wise linear trajectory as
\begin{align}
    x^{(n)} : [0,T] \to \mathbb{R}^d
\end{align}
as defined in \eqref{eq:num_traj}.
In the following, we  will refer to the start and endpoints of each integration step, therefore, we define for $t \in [0,T]$ 
\begin{align}
    \lfloor t \rfloor^{(n)} = t_k^{(n)}, \quad 
    \lceil t \rceil^{(n)} = t^{(n)}_{k+1} \quad \text{if } t \in [t_k^{(n)}, t_{k+1}^{(n)}).
\end{align}
For each $n$, we also define the corresponding forces used by the numerical method as a piece-wise constant function
\begin{align}
    f^{(n)}(t) \coloneqq f(x^{(n)}( \lfloor t \rfloor^{(n)} ) ).
\end{align}
We note that $f^{(n)} : [0,T] \to \mathbb{R}^d$ is a function in $t$ which depends on the numerical solution $x^{(n)}$.

1. The stability of the numerical flow implies for sufficiently large $n$ that
\begin{align}
\Vert \dot x^{(n)}(t) \Vert \leq h^{(n)} ( A + B \Vert x^{(n)}(t)  \Vert)
\quad \text{for all } t \in [0,T].
\label{eq:conv_proof:stab}
\end{align}
A discrete variant of the Gronwall lemma implies that 
\begin{align}
    \Vert x^{(n)} \Vert_{\infty} \leq C_1 \coloneqq \exp(BT) + A \frac{\exp{BT} - 1}{\exp{BT}} ,
\end{align}
and inserting $C_1$ back into \cref{eq:conv_proof:stab} implies
\begin{align}
    \Vert \dot{x}^{(n)} \Vert_{\infty} \leq C_2 \coloneqq A + B C_1.
\end{align}
(We recall that $\Vert \cdot \Vert_\infty$ denotes the sup norm.)

2. By \cite[Excersice 8.2]{Brezis2011} each bounded sequence in $W^{1,\infty}([0,T], \mathbb{R}^{d})$ 
has a subsequence (which we denote without relabeling) which has a limit $x \in L^{\infty}([0,R],\mathbb{R}^d)$ such that
\begin{align}
    x^{(n)} &\to x  \quad \text{in } L^{\infty}([0,T],\mathbb{R}^{d}), \label{eq:xn_strong_conv} \\ 
    \dot{x}^{(n)} &\rightharpoonup^* \dot{x}  \quad \text{in } L^{\infty}([0,T],\mathbb{R}^{d}). \\  
\end{align}
We know that $x \in C([0,T],\mathbb{R}^{d})$, since $x$ is the uniform limit of continuous functions over a compact interval. Moreover, the embedding $L^{\infty}([0,T],\mathbb{R}^{d}) \subset L^{1}([0,T],\mathbb{R}^{d})$ allows us to convert weak${}^*$ convergence in $L^{\infty}$ into weak convergence in $L^1$, i.e., 
\begin{align}
    \dot{x}^{(n)} \rightharpoonup \dot{x} \quad \text{in } L^1([0,T],\mathbb{R}^d).
\end{align}
Using the Lipschitz continuity of $f$, one can show (see \cite[Lemma 3.6]{Venel2011}) that 
\begin{align}
    f^{(n)} \rightharpoonup f(x(\cdot)) \quad \text{in } L^{1}([0,T],\mathbb{R}^d).
\end{align}
It is now left to show that the limit satisfies the differential inclusion, i.e. $\dot x(t) - f(x(t)) \in -N(S,x(t))$ for a.e. $t \in [0,T]$.

3. By Mazur's Lemma, there exists a sequence $z^{(n)} \in L^1([0,T],\mathbb{R}^d)$ such that 
\begin{align}
    z^{(n)} \to \dot x - f(x(\cdot)) \quad \in L^1([0,T],\mathbb{R}^d) \quad \text{as } n \to \infty,
\end{align}
where $z^{(n)}$ is a convex combination for terms from the sequence $\dot{x}^{(n)} - f^{(n)}$, i.e.,
\begin{align}
    z^{(n)} \in \left\{ \sum_{\ell = n}^\infty \lambda_\ell \left( \dot{x}^{(\ell)} - f^{(\ell)} \right) \mid \lambda_\ell \in [0,1] \text{ s.t. } \sum_{\ell=n}^{\infty} \lambda_\ell = 1 \right\}.
\end{align}
We denote $z(t) \coloneqq \dot{x}(t) - f(x(t))$, which is the limit of $z^{(n)}$ by construction.

By taking another subsequence (without relabeling), we obtain pointwise convergence
\begin{align}
    z^{(n)}(t) \to z(t) \quad \text{for all } t \in \mathcal{T} \quad \text{as } n \to \infty
\end{align}
for a dense set $\mathcal{T} \subseteq [0,T]$.

4. We recall that, by definition, we have the relation
\begin{align}
    \dot{x}^{(n)}(t) - f^{(n)}(t) = \frac{\Phi_{h}( y^{(n)} ) - y^{(n)}}{h} - f(y^{(n)}),
\end{align}
where $y^{(n)} \coloneqq x^{(n)}(\lfloor t \rfloor^{(n)})$ denotes the closest discrete numerical steps for a given value of $t$ and $n$.

Moreover, from \cref{eq:xn_strong_conv}, we get $y^{(n)}(t) \to x(t)$ as $n \to \infty$.
The bounded constraint violation \cref{eq:def:feasible} we also obtain that 
\begin{align}
    d_S(y^{(n)}) \leq K h^{(n)} ,
\end{align}
for some constant $K > 0$.  

For a fixed value $t \in \mathcal{T}$ and an arbitrary test direction $w \in \mathbb{R}^d$ we can compute 
\begin{align}
    \langle z(t), w \rangle 
    &\leq 
    \limsup_{n \to \infty} 
    \langle z^{(n)}(t), w \rangle
    \\ 
    &\leq 
    \limsup_{n \to \infty} \langle \dot{x}^{(n)} - f^{(n)}(t), w \rangle 
    \\ 
    &\leq 
    \limsup_{n \to \infty} \langle \frac{\Phi_{h}( y^{(n)} ) - y^{(n)}}{h} - f(y^{(n)}), w \rangle 
    \\ 
    &\leq 
    \limsup_{n \to \infty} \sigma( -C \partial^P d_S(x(t)), w).
\end{align}
Where we used the constant $C$ from the condition for scalarly upper semicontinuous consistency in \cref{eq:def:consistency}.

Taking the supremum with respect to all test directions $w \in \mathbb{R}^d$ implies 
\begin{align}
    \sigma( \{ z(t) \}, w ) \leq \sigma( -C \partial^P d_S(x(t)) ).
\end{align}
Hence, 
\begin{align}
    z(t) = \dot{x}(t) - f(x(t)) \in -C \partial^P d_S(x(t)) \subseteq -N(S, x(t)).
\end{align}
This shows that the limit of the numerical approximations solves the differential inclusion almost everywhere, hence, concludes the proof.
\end{proof}

\section{Metric calmness of non-overlapping spheres}
\label{app:proof_discs_metric_calm}

We show that the feasible sets of non-overlapping spheres satisfy the conditions for a metrically calm intersection \cref{ass:metric_calm}.

\begin{lemma}
    \label{lem:spheres_metric_calm}
    For given $n, R, d > 0$, let 
    \begin{align} 
    S_{ij} \coloneqq 
    \{ x = (X_1,\dots,X_n) \in \mathbb{R}^{nd} \mid 
    \Vert X_i - X_j \Vert \geq 2 R \} 
    \quad \text{and} \quad 
    S = \bigcap_{1 \leq i < j \leq n} S_{ij}.
    \end{align}
    Then, there exists an $\alpha > 0$ only depending on $n$ and $R$ such that
    \begin{align}
    \inf_{x \in \mathbb{R}^{nd}} \max_{1 \leq i < j \leq n} \frac{d_{S_{ij}}(x)}{d_{S}(x)} \geq \alpha.
    \end{align}
\end{lemma}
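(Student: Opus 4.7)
The statement is equivalent to showing $d_S(x) \leq C(n)\, M(x)$ for every $x \in \mathbb{R}^{nd}$, where $M(x) := \max_{i<j} d_{S_{ij}}(x)$. A direct calculation gives $d_{S_{ij}}(x) = (2R - \|X_i - X_j\|)^+/\sqrt{2}$, since the projection onto $S_{ij}$ moves $X_i$ and $X_j$ symmetrically along the line connecting them. By the scaling symmetry $(x,R) \mapsto (\lambda x, \lambda R)$ I may fix $R = 1$ throughout. The plan is then to construct an explicit feasible $y \in S$ with $\|y - x\| \leq C(n)\, M(x)$.

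The construction partitions the disks according to the overlap graph $G(x)$ on $\{1,\dots,n\}$ whose edges are pairs with $\|X_i - X_j\| < 2R$, and treats each connected component separately. For a component $\mathcal{C}$ of size $k \leq n$, connectedness forces the diameter of $(X_i)_{i \in \mathcal{C}}$ to be at most $2R(k-1)$. The argument then splits based on the minimal intra-component distance $d^{\mathcal{C}}_{\min} := \min_{i \neq j,\, i,j \in \mathcal{C}} \|X_i - X_j\|$. If $d^{\mathcal{C}}_{\min} \geq R$, I rescale $\mathcal{C}$ from its centroid $c$ by factor $\alpha = 2R/d^{\mathcal{C}}_{\min}$, setting $Y_i := c + \alpha(X_i - c)$. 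Then $\|Y_i - Y_j\| = \alpha \|X_i - X_j\| \geq 2R$ within $\mathcal{C}$, and using $\alpha - 1 = \sqrt{2}\, M/d^{\mathcal{C}}_{\min} \leq \sqrt{2}\, M/R$ together with the diameter bound, the component's contribution to $\|y-x\|$ is at most $C_1(n)\, M$. If $d^{\mathcal{C}}_{\min} < R$, then $M \geq R/\sqrt{2}$, so any absolute displacement of order $R$ is already $O(M)$; I then replace $\mathcal{C}$ by an explicit reference configuration centered at $c$ (vertices of a regular $(k-1)$-simplex of circumradius $R\sqrt{2(k-1)/k}$ when $d \geq k-1$, or a bounded lattice of $k$ points at pairwise distance $\geq 2R$ otherwise), whose absolute displacement from $(X_i)_{i \in \mathcal{C}}$ is bounded by $C_2(n)\, R \leq \sqrt{2}\, C_2(n)\, M$.

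The main obstacle lies in coordinating the perturbations across components so that the inter-component constraints (which hold as $\|X_i - X_j\| \geq 2R$ but may be tight) remain feasible after the within-component perturbations. I would handle this by working instead with a relaxed overlap graph $\tilde G(x)$ whose edges are pairs at distance $< 2R + \delta$ for $\delta$ chosen comparable to $M$: the corresponding \emph{effective} components then keep mutual separation at least $2R + \delta$, a buffer large enough to absorb the intra-component perturbations of size $O(M)$. Merging overlap components into effective components only enlarges them (up to at most $n$ vertices) without decreasing $d_{\min}$, so the two case analyses above still apply with $k$ replaced by the (possibly larger) effective component size. The final constant is then $C(n) := \max_{1 \leq k \leq n} \max\bigl(C_1(k),\, \sqrt{2}\, C_2(k)\bigr)$, depending only on $n$, so that $\alpha := C(n)^{-1}$ depends only on $n$ (and on $R$ only through the scaling normalization).
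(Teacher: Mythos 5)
Your overall architecture matches the paper's: rescale tightly clustered groups by a factor $1+O(M/R)$ when overlaps are shallow, fall back on an ``absolute'' $O(R)$-displacement construction when some overlap is deep, and decompose spread-out configurations into groups so the rescaling can be done locally. The genuine gap is in the step you yourself flag as the main obstacle: the single relaxed threshold $2R+\delta$ with $\delta$ ``comparable to $M$'' does not close. The per-point displacement inside an effective component of size $k$ is not $O(M)$ but $O\bigl((\alpha-1)\,\mathrm{diam}\bigr)$ in the scaling case and $O(\mathrm{diam}+R)$ in the replacement case, and $\mathrm{diam}\leq (2R+\delta)(k-1)$ itself grows with $\delta$. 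Requiring the buffer to absorb the displacement thus gives a self-referential condition of the form $\delta \gtrsim n(2R+\delta)$, which has no solution for $n\geq 2$. The failure is most blatant in your replacement case: if a component consists of $k$ coincident points, the reference simplex moves each by $\approx\sqrt{2}R$, forcing $\delta\gtrsim R$, which merges components up to distance $\approx 3R$, whose diameters are $O(nR)$, whose replacement costs $O(nR)$ displacement, forcing $\delta\gtrsim nR$, and so on. Even in the pure scaling case the feedback only terminates if one first restricts to $M\leq \varepsilon_0(n)R$ for a sufficiently small $n$-dependent $\varepsilon_0$, a restriction your case split (at $d^{\mathcal{C}}_{\min}\lessgtr R$, per component) does not provide.

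The paper resolves both issues differently, and you would need one of these two devices. For large overlaps it proves a \emph{global} bound $d_S(x)\leq C(n)R$ valid for every configuration (\cref{app:lem:max_dS}), by inductively adding one sphere at a time and pushing all previously placed spheres radially away from the new center by $2R$; this map is expansive, so it never destroys already-established separations and needs no inter-group coordination at all --- unlike your per-component reference configuration, which cannot be certified against neighbouring components. For small overlaps ($D(x)<R$) it replaces your single additive buffer by an iterative cluster-merging with \emph{dyadically growing} radii $2^LR$ (\cref{app:lem:clusters}): whenever two cluster balls of radius $2^{L+1}R$ meet, they are merged and $L$ increases by $2$, so the process terminates after at most $n-1$ merges with $L\leq 2(n-1)$, and the resulting inter-cluster separation $2^{L+2}R$ dominates the scaling displacement $O(2^L D(x))$ precisely because $D(x)<R$. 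Your connected-component decomposition and centroid-based rescaling are fine as local ingredients, but without a terminating multi-scale merging (or an equivalent pigeonhole over separation scales) and a genuinely global argument for the deep-overlap regime, the proof as written does not go through.
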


The main idea of the proof is visualised in \cref{fig:proof_metric_calm}. Our proof relies on the following two lemmas.

\begin{lemma}
\label{app:lem:max_dS}
    For $n,d,R,S$ defined as in \cref{lem:spheres_metric_calm}, there exists an $\alpha > 0$ (only depending on $n$ and $R$) such that for any $x \in \mathbb{R}^{nd}$ it holds
    \begin{align}
    \alpha d_S(x) \leq R.
    \end{align}
\end{lemma}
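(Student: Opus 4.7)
The plan is to construct explicitly, for an arbitrary configuration $x = (X_1, \dots, X_n) \in \mathbb{R}^{nd}$, a nearby non-overlapping configuration $y \in S$ whose distance to $x$ is bounded by a constant depending only on $n$ and $R$. This immediately yields a uniform upper bound on $d_S(x)$, which is exactly the claim.

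First, I would fix any unit vector $e \in \mathbb{R}^d$ and, after relabeling indices, assume that $\langle X_1, e\rangle \leq \dots \leq \langle X_n, e\rangle$. Then I would define increments $t_1 := 0$ and, recursively,
\[
t_i := t_{i-1} + \max\bigl(0,\ 2R - \langle X_i - X_{i-1}, e\rangle\bigr) \quad \text{for } i = 2, \dots, n,
\]
and set $Y_i := X_i + t_i e$. Intuitively, this greedy construction shifts each sphere along $e$ just enough so that consecutive pairs are separated by at least $2R$ in the $e$-direction.

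Next, I would verify $y := (Y_1, \dots, Y_n) \in S$ by telescoping: for $1 \leq j < i \leq n$, writing $a_k := \langle X_k - X_{k-1}, e\rangle \geq 0$,
\[
\langle Y_i - Y_j, e\rangle = \sum_{k=j+1}^{i} \bigl( a_k + \max(0, 2R - a_k) \bigr) \geq 2R(i - j) \geq 2R,
\]
since each summand equals $2R$ when $a_k \leq 2R$ and equals $a_k \geq 2R$ otherwise. By Cauchy–Schwarz, $\|Y_i - Y_j\| \geq \langle Y_i - Y_j, e\rangle \geq 2R$, so $y \in S$.

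Finally, since each recursion step adds at most $2R$, the crude bound $t_i \leq 2R(i-1)$ gives
\[
d_S(x)^2 \leq \|y - x\|^2 = \sum_{i=1}^n t_i^2 \leq 4R^2 \sum_{i=1}^{n} (i-1)^2 = \frac{2R^2(n-1)n(2n-1)}{3},
\]
from which $\alpha := \sqrt{3/(2(n-1)n(2n-1))}$ (depending only on $n$) satisfies $\alpha \, d_S(x) \leq R$. The construction is entirely elementary; the only step requiring a touch of care is the telescoping identity for feasibility, where one must use that the indices are sorted along $e$ so that all differences $a_k$ are non-negative. I do not foresee any deeper obstacle.
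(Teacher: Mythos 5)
Your proof is correct, but it takes a genuinely different route from the paper's. The paper argues by induction on the number of spheres: given a non-overlapping arrangement of the first $k$ centers, it pushes each of them radially away from $X_{k+1}$ by $2R$, checks that radial pushing from a common point does not decrease pairwise distances, and accumulates a displacement bound $\sum_i \Vert X_i - Y_i \Vert \leq 2R\binom{n}{2}$, which is then converted to the Euclidean norm via norm equivalence. You instead sort the centers along a fixed unit direction $e$ and greedily translate each one along $e$ so that consecutive projections are $2R$ apart; feasibility follows from the telescoping identity $\langle Y_i - Y_j, e\rangle = \sum_k \max(a_k, 2R) \geq 2R$ together with $\Vert v \Vert \geq \langle v, e\rangle$, and the displacement bound is obtained directly in the Euclidean norm with a fully explicit constant (which is in fact of order $n^{3/2}$ rather than the paper's $n^2$). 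Your construction avoids both the induction and the norm-equivalence step, at the cost of needing the sorting/relabeling observation (harmless, since $S$ is permutation-symmetric). The only cosmetic point is the degenerate case $n=1$, where your formula for $\alpha$ is undefined but $S = \mathbb{R}^{d}$ and $d_S(x)=0$, so any $\alpha$ works; one sentence handles it.
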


\begin{lemma}
\label{app:lem:clusters}
    For any $x \in \mathbb{R}^{nd}$ there exists $1 \leq m \leq n$, $L \leq 2n$ and
    $Q_1,\dots,Q_m \in \mathbb{R}^d$ such that
\begin{align}
\label{eq:app:clusters}
\bigcup_{k=1}^n B_{R}(X_k) \subset \bigcup_{k=1}^m B_{2^{L} R}(Q_k)
\quad \text{and} \quad 
B_{2^{L+1}R}(Q_i) \cap B_{2^{L+1}R}(Q_{j}) = \emptyset
\quad \text{for all } 1 \leq i < j \leq m.
\end{align}
\end{lemma}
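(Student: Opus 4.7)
The plan is to prove \cref{app:lem:clusters} by induction on $n$, but with a strengthened inductive statement that allows the input balls to share any common radius $2^\ell R$ for arbitrary $\ell \in \mathbb{Z}_{\geq 0}$, rather than insisting on $R$. Concretely, I would prove: for any $n \geq 1$ equal balls $B_{2^\ell R}(X_1), \ldots, B_{2^\ell R}(X_n)$, there exist centers $Q_1, \ldots, Q_m$ with $m \leq n$ and an integer $L' \leq \ell + 2(n-1)$ satisfying analogous covering and separation properties. The lemma then follows by specializing $\ell = 0$, yielding $L \leq 2(n-1) \leq 2n$.

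The base case $n = 1$ is immediate. For the inductive step, I would split into two cases. If $B_{2^{\ell+1}R}(X_i) \cap B_{2^{\ell+1}R}(X_j) = \emptyset$ for every $i \neq j$, then setting $Q_k = X_k$, $m = n$, $L' = \ell$ satisfies everything trivially. Otherwise there exist $i \neq j$ with $\|X_i - X_j\| < 2^{\ell+2} R$; I would pick $Y = (X_i + X_j)/2$ and verify by the triangle inequality that $B_{2^\ell R}(X_i) \cup B_{2^\ell R}(X_j) \subset B_{2^{\ell+2}R}(Y)$, since every point of either original ball is within $2^{\ell+1}R + 2^\ell R = 3 \cdot 2^\ell R < 2^{\ell+2} R$ of $Y$.

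Next I would replace $X_i, X_j$ by the single center $Y$ and, crucially, promote the remaining $n - 2$ balls to the inflated radius $2^{\ell+2}R$ without changing their centers. The result is a family of $n - 1$ equal-radius balls at level $\ell + 2$, and by the inductive hypothesis it admits a clustering with $m \leq n - 1 \leq n$ and $L' \leq (\ell+2) + 2(n-2) = \ell + 2(n-1)$. Since every original ball $B_{2^\ell R}(X_k)$ is contained in one of the promoted balls, the covering property passes to the original collection, and the separation property of the output is inherited verbatim from the inductive hypothesis.

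The subtle point driving the $2n$ bound is the accounting of each merge: the midpoint construction forces the radius to jump by a factor of $4$ rather than $2$, because the merged ball must simultaneously contain two old balls whose centers can be up to $2^{\ell+2}R$ apart. Consequently each merge event adds exactly two to the level exponent, and since at most $n - 1$ merges can occur before only one cluster remains, the final exponent cannot exceed $\ell + 2(n-1)$. I do not expect any serious obstacle beyond this bookkeeping, since the promotion of radii is harmless and the induction is structurally identical to a standard single-linkage merge argument; the only care needed is to keep the inductive statement general enough to accommodate the shifted level $\ell + 2$.
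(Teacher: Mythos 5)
Your proof is correct and follows essentially the same strategy as the paper's: repeatedly merge any two clusters whose doubled balls intersect, paying a factor of $4$ (i.e.\ $+2$ in the exponent $L$) per merge, with at most $n-1$ merges possible. The only cosmetic differences are that you phrase the argument as an induction on $n$ with a level parameter $\ell$ and place the merged center at the midpoint, whereas the paper runs an explicit iteration and simply keeps one of the two old centers; both yield the same containment $3\cdot 2^{\ell}R < 2^{\ell+2}R$ and the same bound $L \leq 2(n-1)$.
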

\begin{figure}
    \centering
    \includegraphics[width=0.8\textwidth]{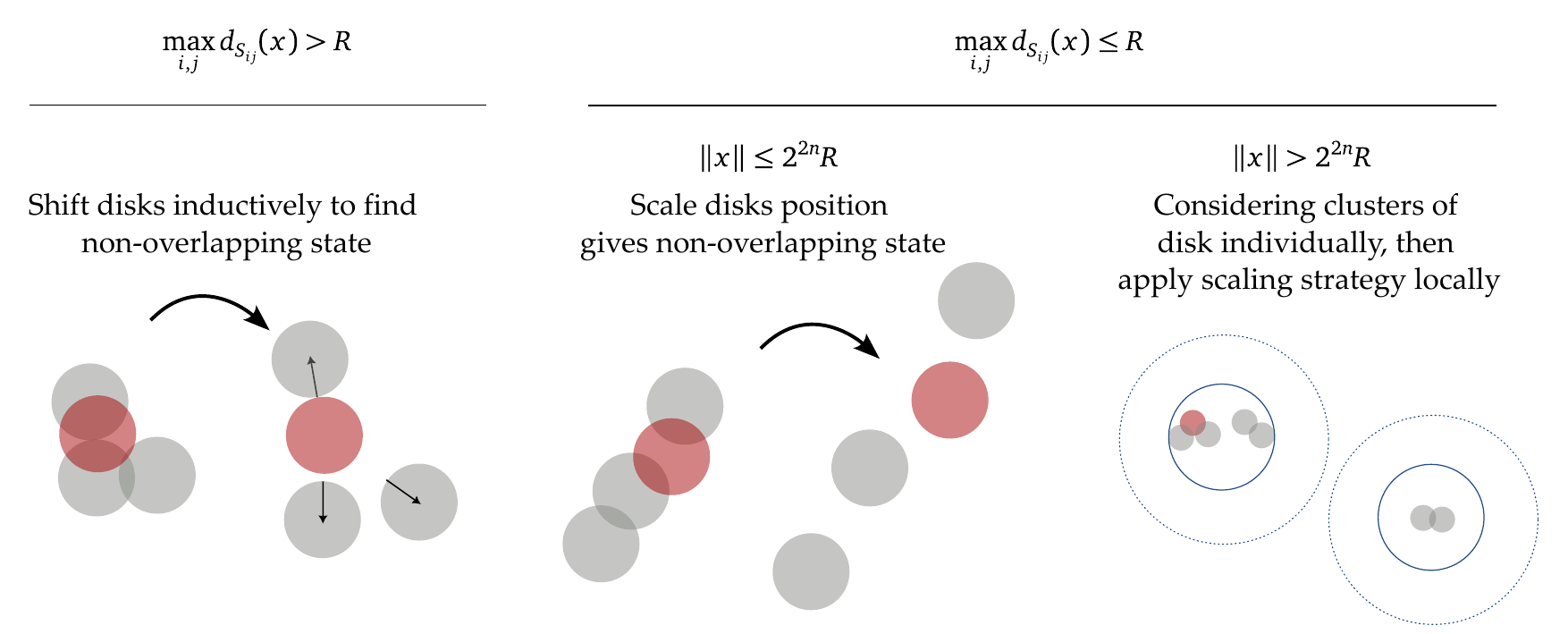}
    \caption{The three main cases in the proof of metric calmness in \cref{lem:spheres_metric_calm}. In all cases, we seek a non-overlapping state $y \in S$ such that $\Vert x - y \Vert \leq \frac{1}{\alpha} \max_{i<j} d_{S_{ij}}(x)$. Hence, finding $y$ can only involve operations for which upper bounds in $\max_{i<j} d_{S_{ij}}(x)$ exist.
If $\max_{i<j} d_{S_{ij}}(x)$ is larger than $R$, then finding a suitable non-overlapping state is possible by consecutively shifting spheres, see \cref{app:lem:max_dS}. However, when all pairwise overlaps are small, then we can resolve all overlaps at once via scaling of the positions by a factor proportional to $\max_{i<j} d_{S_{ij}}(x)$. If spheres are very far away from each other, then \cref{app:lem:clusters} allows us to apply the scaling strategy locally.}
    \label{fig:proof_metric_calm}
\end{figure}

\begin{proof}[Proof of \cref{lem:spheres_metric_calm}]
Let $x = (X_1,\dots,X_n) \in \mathbb{R}^{2n}$ be arbitrary but fixed. In the following, we use the shorthand notation: 
    \begin{align}
    \mathcal{J} \coloneqq \{ (i,j) \mid 1 \leq i < j \leq n \} 
    \quad \text{and} \quad
        D(x) \coloneqq \max_{(i,j) \in \mathcal{J}} d_{S_{ij}}(x).
    \end{align}
We need to show that for all $x\in \mathbb{R}^{2n}$, we have $D(x) \geq \alpha d_S(x)$ for some constant $\alpha$.

If $D(x) > R$ holds, then \cref{app:lem:max_dS} directly yields the claim, since there exists an $\alpha > 0$ (only depending on $n, R$) such that $\alpha d_S(x) \leq R$, hence,
\begin{align}
    D(x) > R \geq \alpha d_S(x).
\end{align}
Therefore, it is only left to show the claim for the case $D(x) < R$.

First, we consider the special case $\Vert x \Vert \leq 2^{L} R$ for some $L \leq 2(n-1)$. We recall that $d_{S_{ij}}(x) = 2R - \Vert X_i - X_j \Vert$ for $x \notin S_{ij}$ and $d_{S_{ij}}(x) = 0$ if $x \in S_{ij}$. Which also implies 
\begin{align}
    \Vert X_{i} - X_j \Vert \geq 2 R - d_{S_{ij}} \geq 2R - D(x).
\end{align}
Next. we define
\begin{align}
\label{app:eq:c_def}
    c \coloneqq \frac{2R}{2R - D(x)} \in [1,2)
\end{align}
and compute
\begin{align}
    \Vert c X_i - c X_j \Vert &= c \Vert X_i - X_j \Vert \\
    &\geq c ( 2 R - D(x) ) \\ 
    &= 2R \quad \text{for all } (i,j) \in \mathcal{J}.
\end{align}
This implies $cx \in S$, which already yields the claim since, 
\begin{align}
    d_S(x) &\leq \Vert c x - x \Vert 
    = 
    \vert c - 1 \vert \Vert x \Vert \\ 
    &= \frac{D(x)}{2R - D(x)} \Vert x \Vert \\
    &\leq \frac{2^{2(n-1)}R}{R} D(x) = 2^{2(n-1)} D(x).
\end{align}
Which is the claim for $\alpha = (2^{2(n-1)})^{-1}$. 

Now, we consider the unbounded case $x \in \mathbb{R}^{nd}$ with $D(x) < R$. By \cref{app:lem:clusters}, we know that there exist $1 \leq m \leq n$, $Q_1,\dots,Q_m \in \mathbb{R}^{d}$ and $L \leq 2(n-1)$ such that
\begin{align}
\bigcup_{k=1}^n B_{R}(X_k) \subset \bigcup_{k=1}^m B_{2^{L} R}(Q_k)
\quad \text{and} \quad 
B_{2^{L+1}R}(Q_i) \cap B_{2^{L+1}R}(Q_{j}) = \emptyset
\quad \text{for all } 1 \leq i < j \leq m.
\end{align}
The lemma provides independent clusters of positions that are sufficiently separated to apply the same scaling as in the previous case without introducing new intersections. To formalize this, we define an index partition via
\[
I_k \coloneqq \{ i \mid 1 \leq i \leq n, X_i \in B_{2^L R}(Q_k) \}.
\]
In the following, we will scale the points with indices in $I_k$ by the same factor $c$ as defined in \cref{app:eq:c_def}, but this time with the origin $Q_k$. This yields
\begin{align}
    y = (Y_1,\dots,Y_n) \coloneqq q + c(x-q)
    \quad 
    \text{where} 
    \quad 
    q \coloneqq (Q_1,\dots,Q_m).
\end{align}
For any $i, j \in I_k$, we compute
\begin{align}
    \Vert Y_i - Y_j \Vert 
    &= \Vert Q_k - c(X_i - Q_k) - Q_k + c(X_j - Q_k) \Vert \\ 
    &= c \Vert X_i - X_j \Vert \\ 
    &\geq c (2R - D(x)) \\ 
    &= 2R.
    \label{app:eq:ij_in_Ik}
\end{align}
On the other hand, for $i \in I_k$ and $j \in I_{\ell}$ with $k \neq \ell$ we use the first and second conditions in \cref{eq:app:clusters} (and $1 \leq c < 2$) to compute 
\begin{align}
    \Vert Y_i - Y_j \Vert 
    &= \Vert Q_k - c(X_i - Q_k) - Q_\ell + c(X_j - Q_\ell) \Vert 
    \\
    &\geq \Vert Q_k - Q_\ell \Vert - c \Vert X_i - Q_k \Vert - c \Vert X_j - Q_\ell \Vert \\ 
    &\geq 2^{L+2} R - 2c (2^L R - R) \\
    &\geq 2^{L+2}R - 2^{L+2}R + 2cR \\ 
    &\geq 2R. \label{app:eq:ij_ij_Ik_Il}
\end{align}
Combining \cref{app:eq:ij_in_Ik,app:eq:ij_ij_Ik_Il} yields $y \in S$. We obtain the claim by computing 
\begin{align}
    d_S(x) 
    &\leq \Vert x - y \Vert \\ 
    &= \Vert x - q - c(x - q)\Vert \\ 
    &= \vert 1-c \vert \Vert q - x \Vert \\ 
    &\leq \frac{D(x)}{2R - D(x)} \sum_{k=1}^m \sum_{i \in I_k} \Vert X_i - Q_k \Vert 
    \\ 
    &\leq \frac{n 2^L R}{R} D(x) \leq n 2^{2(n-1)} D(x).
\end{align}
Which yields the claim also for the unbounded case with $\alpha = (n 2^{2n})^{-1}$.

\end{proof}

\begin{proof}[Proof of \cref{app:lem:max_dS}]
We consider an arbitrary $x = (X_1,\dots,X_n) \in \mathbb{R}^{nd}$. 
Since the norms $x \mapsto \Vert x \Vert_{\mathbb{R}^{nd}}$ and $x = (X_1,\dots,X_n) \mapsto \sum_{i=1}^n \Vert X_i \Vert_{\mathbb{R}^2}$ are equivalent (with constants only depending on $n$), it suffices to show that there exist $y \in \mathbb{R}^{nd}$ such that 
\begin{align} 
\label{app:eq:metric_case1_inductionclaim}
y \in S \quad \text{and} \quad 
\sum_{i=1}^n \Vert X_i - Y_i \Vert \leq 2 R \frac{n(n-1)}{2}.
\end{align} We prove this by induction over the number of spheres.

Let us define $x_k = (X_1,\dots,X_k) \in \mathbb{R}^{2k}$ as the truncation of $x$ to the first $k$ center positions. 
 
For $k=1$, the claim holds trivially, as $x_1 = (X_1)$ represents only one sphere, which is a non-overlapping state and $\Vert X_1 - X_1 \Vert = 0 \leq 2R \frac{1(1-1)}{2}$.

Assuming the induction assumption for $k \geq 1$, we have that there exist positions $\tilde{x}_k = (\tilde{X}_1,\dots,\tilde{X}_k)$ such that $\Vert \tilde{X}_i - \tilde{X}_j \Vert \geq 2R$ for all $1 \leq i < j \leq k$ and $\sum_{i=1}^k \Vert X_i - \tilde{X}_i \Vert \leq 2R \frac{k (k-1)}{2}$.

We construct an non-overlapping state $y_{k+1} = (Y_1,\dots,Y_{k+1})$ via
\begin{align}
    Y_i \coloneqq \tilde{X}_i + 2 R \frac{\tilde{X}_i - X_{k+1}}{\Vert \tilde{X}_i - X_{k+1} \Vert}, 
    \quad 
    \text{and}
    \quad 
    Y_{k+1} \coloneqq X_{k+1}.
\end{align}
If the singularity $\tilde{X}_i = X_{k+1}$ occurs for some $i \leq k$, then we replace the fraction with an arbitrary normalized vector.
Since the first $k$ positions are pushed away from the position $X_{k+1}$, we obtain  
\begin{align} 
\Vert Y_i - Y_j \Vert 
\geq \Vert \tilde{X}_i - \tilde{X}_j \Vert \geq 2R \quad \text{ for all } 
1 \leq i < j \leq k.
\end{align}
Moreover, we also have $\Vert Y_i - Y_{k+1}\Vert \geq 2R$ for all $1\leq i \leq k$.

Therefore, the spheres with center positions $y$ are non-overlapping and satisfy ${\sum_{i=1}^{k+1} \Vert X_i - Y_i \Vert \leq 2R \frac{k(k-1)}{2} + 2R k}$, which concludes the induction proof. 
\end{proof}

\begin{proof}[Proof of \cref{app:lem:clusters}]
The main idea of this proof is to construct $m$ and $Q_1,\dots,Q_m$ with an iteration that either terminates in a state satisfying the claim. 
We start with $m=n$,  $Q_1^{(0)} \coloneqq X_1, \dots, Q_n^{(0)} \coloneqq X_n \in \mathbb{R}^d$ and $L = 0$, which ensures that the first condition in \eqref{eq:app:clusters} is satisfied. If the second condition is also satisfied, we are done.
    
Otherwise, we will continue inductively by constructing new points wit the following procedure.

Given $Q_1^{(k)}, \dots, Q_m^{(k)} \in \mathbb{R}^d$ and $L \geq 0$ such that 
\begin{align}
    \bigcup_{i=1}^n B_{R}(X_i) \subset \bigcup_{j=1}^m B_{2^{L} R}(Q_j^{(k)})
\end{align}
holds, but the second condition in \cref{eq:app:clusters} is not satisfied. Then, without loss of generality, there is an index $1 \leq i^* < m$ such that
\begin{align}
    B_{2^{L+1} R}(Q_{i^*}^{(k)}) \cap B_{2^{L+1} R}(Q_m^{(k)}) \neq \emptyset 
    \quad 
    \text{which is the same as}
    \quad 
    \Vert Q_{i^*}^{(k)} - Q_m^{(k)} \Vert 
    \leq 2^{L+1} R.
    \label{app:eq:cluster_intersecting}
\end{align}
We then remove the point $Q_m^{(k)}$, i.e., we define $Q_{j}^{(k+1)} \coloneqq Q_{j}^{(k)}$ for $1 \leq j \leq m-1$ and set $m' \coloneqq m-1$ and $L' \coloneqq L+2$ for the next step.

Due to \eqref{app:eq:cluster_intersecting} we have 
 \begin{align} 
B_{2^{L} R}(Q_m^{(k)}) \subset B_{2^{L+2} R}(Q_{{i^*}}^{(k+1)})
\end{align}
which implies 
\begin{align}
    \bigcup_{i=1}^n B_{R}(X_i) \subset \bigcup_{j=1}^{m-1} B_{2^{L+2} R}(Q_j^{(k+1)}).
\end{align}
If the new choice $m', Q_1^{(k+1)}, \dots, Q_{m'}^{(k+1)}$ and $L'$ satisfy \cref{eq:app:clusters} then we are done. Otherwise, we repeat the procedure until $m=1$ and $L = n-1$. Then, the second condition in \cref{eq:app:clusters} holds trivially (since there is only one set left). Hence, the construction shows the existence of $m,Q_1,\dots,Q_m$ and $L \leq 2(n-1)$ satisfying \cref{eq:app:clusters}.
\end{proof}

\end{appendices}

\bibliography{plunder_main.bib}


\begin{thebibliography}{47}
\ifx \bisbn   \undefined \def \bisbn  #1{ISBN #1}\fi
\ifx \binits  \undefined \def \binits#1{#1}\fi
\ifx \bauthor  \undefined \def \bauthor#1{#1}\fi
\ifx \batitle  \undefined \def \batitle#1{#1}\fi
\ifx \bjtitle  \undefined \def \bjtitle#1{#1}\fi
\ifx \bvolume  \undefined \def \bvolume#1{\textbf{#1}}\fi
\ifx \byear  \undefined \def \byear#1{#1}\fi
\ifx \bissue  \undefined \def \bissue#1{#1}\fi
\ifx \bfpage  \undefined \def \bfpage#1{#1}\fi
\ifx \blpage  \undefined \def \blpage #1{#1}\fi
\ifx \burl  \undefined \def \burl#1{\textsf{#1}}\fi
\ifx \doiurl  \undefined \def \doiurl#1{\url{https://doi.org/#1}}\fi
\ifx \betal  \undefined \def \betal{\textit{et al.}}\fi
\ifx \binstitute  \undefined \def \binstitute#1{#1}\fi
\ifx \binstitutionaled  \undefined \def \binstitutionaled#1{#1}\fi
\ifx \bctitle  \undefined \def \bctitle#1{#1}\fi
\ifx \beditor  \undefined \def \beditor#1{#1}\fi
\ifx \bpublisher  \undefined \def \bpublisher#1{#1}\fi
\ifx \bbtitle  \undefined \def \bbtitle#1{#1}\fi
\ifx \bedition  \undefined \def \bedition#1{#1}\fi
\ifx \bseriesno  \undefined \def \bseriesno#1{#1}\fi
\ifx \blocation  \undefined \def \blocation#1{#1}\fi
\ifx \bsertitle  \undefined \def \bsertitle#1{#1}\fi
\ifx \bsnm \undefined \def \bsnm#1{#1}\fi
\ifx \bsuffix \undefined \def \bsuffix#1{#1}\fi
\ifx \bparticle \undefined \def \bparticle#1{#1}\fi
\ifx \barticle \undefined \def \barticle#1{#1}\fi
\bibcommenthead
\ifx \bconfdate \undefined \def \bconfdate #1{#1}\fi
\ifx \botherref \undefined \def \botherref #1{#1}\fi
\ifx \url \undefined \def \url#1{\textsf{#1}}\fi
\ifx \bchapter \undefined \def \bchapter#1{#1}\fi
\ifx \bbook \undefined \def \bbook#1{#1}\fi
\ifx \bcomment \undefined \def \bcomment#1{#1}\fi
\ifx \oauthor \undefined \def \oauthor#1{#1}\fi
\ifx \citeauthoryear \undefined \def \citeauthoryear#1{#1}\fi
\ifx \endbibitem  \undefined \def \endbibitem {}\fi
\ifx \bconflocation  \undefined \def \bconflocation#1{#1}\fi
\ifx \arxivurl  \undefined \def \arxivurl#1{\textsf{#1}}\fi
\csname PreBibitemsHook\endcsname

\bibitem[\protect\citeauthoryear{Wriggers}{2006}]{Wriggers2006}
\begin{bbook}
\bauthor{\bsnm{Wriggers}, \binits{P.}}:
\bbtitle{Computational Contact Mechanics},
\bedition{2nd ed.} edn.
\bpublisher{Springer},
\blocation{Berlin}
(\byear{2006}).
\doiurl{10.1007/978-3-540-32609-0}
\end{bbook}
\endbibitem

\bibitem[\protect\citeauthoryear{Acary and Brogliato}{2008}]{Acary2008}
\begin{bbook}
\bauthor{\bsnm{Acary}, \binits{V.}},
\bauthor{\bsnm{Brogliato}, \binits{B.}}:
\bbtitle{Numerical Methods for Nonsmooth Dynamical Systems. {Applications} in
  Mechanics and Electronics}.
\bsertitle{Lect. {Notes} {Appl}. {Comput}. {Mech}.},
vol. \bseriesno{35}.
\bpublisher{Springer},
\blocation{Berlin}
(\byear{2008})
\end{bbook}
\endbibitem

\bibitem[\protect\citeauthoryear{Moreau}{1977}]{Moreau1977}
\begin{barticle}
\bauthor{\bsnm{Moreau}, \binits{J.J.}}:
\batitle{Evolution problem associated with a moving convex set in a {Hilbert}
  space}.
\bjtitle{Journal of Differential Equations}
\bvolume{26}(\bissue{3}),
\bfpage{347}--\blpage{374}
(\byear{1977})
\doiurl{10.1016/0022-0396(77)90085-7} .
Accessed 2022-12-07
\end{barticle}
\endbibitem

\bibitem[\protect\citeauthoryear{Filippov}{1988}]{Filippov1988}
\begin{bbook}
\bauthor{\bsnm{Filippov}, \binits{A.F.}}:
\bbtitle{Differential Equations with Discontinuous Right-hand Sides. {Ed}. by
  {F}. {M}. {Arscott}. {Transl}. from The {Russian}}.
\bsertitle{Math. {Appl}., {Sov}. {Ser}.},
vol. \bseriesno{18}.
\bpublisher{Kluwer Academic Publishers},
\blocation{Dordrecht etc.}
(\byear{1988})
\end{bbook}
\endbibitem

\bibitem[\protect\citeauthoryear{Brogliato and Tanwani}{2020}]{Brogliato2020}
\begin{barticle}
\bauthor{\bsnm{Brogliato}, \binits{B.}},
\bauthor{\bsnm{Tanwani}, \binits{A.}}:
\batitle{Dynamical {Systems} {Coupled} with {Monotone} {Set}-{Valued}
  {Operators}: {Formalisms}, {Applications}, {Well}-{Posedness}, and
  {Stability}}.
\bjtitle{SIAM Review}
\bvolume{62}(\bissue{1}),
\bfpage{3}--\blpage{129}
(\byear{2020})
\doiurl{10.1137/18M1234795} .
Accessed 2022-07-12
\end{barticle}
\endbibitem

\bibitem[\protect\citeauthoryear{Braun et~al.}{2021}]{Braun2021}
\begin{bbook}
\bauthor{\bsnm{Braun}, \binits{P.}},
\bauthor{\bsnm{Grüne}, \binits{L.}},
\bauthor{\bsnm{Kellett}, \binits{C.M.}}:
\bbtitle{({In}-){Stability} of {Differential} {Inclusions}: {Notions},
  {Equivalences}, and {Lyapunov}-like {Characterizations}}.
\bsertitle{{SpringerBriefs} in {Mathematics}}.
\bpublisher{Springer},
\blocation{Cham}
(\byear{2021}).
\doiurl{10.1007/978-3-030-76317-6} .
\burl{https://link.springer.com/10.1007/978-3-030-76317-6}
Accessed 2023-08-19
\end{bbook}
\endbibitem

\bibitem[\protect\citeauthoryear{Kleinert and Simeon}{2022}]{Kleinert2022}
\begin{botherref}
\oauthor{\bsnm{Kleinert}, \binits{J.}},
\oauthor{\bsnm{Simeon}, \binits{B.}}:
Differential-{Algebraic} {Equations} and {Beyond}: {From} {Smooth} to
  {Nonsmooth} {Constrained} {Dynamical} {Systems},
pp. 73--132.
Springer,
Cham
(2022).
\doiurl{10.1007/978-3-030-96173-2_4} .
\url{https://doi.org/10.1007/978-3-030-96173-2_4}
Accessed 2022-04-05
\end{botherref}
\endbibitem

\bibitem[\protect\citeauthoryear{Buttenschön and
  Edelstein-Keshet}{2020}]{Buttenschoen2020}
\begin{barticle}
\bauthor{\bsnm{Buttenschön}, \binits{A.}},
\bauthor{\bsnm{Edelstein-Keshet}, \binits{L.}}:
\batitle{Bridging from single to collective cell migration: {A} review of
  models and links to experiments}.
\bjtitle{PLOS Computational Biology}
\bvolume{16}(\bissue{12}),
\bfpage{1008411}
(\byear{2020})
\doiurl{10.1371/journal.pcbi.1008411} .
Accessed 2022-09-26
\end{barticle}
\endbibitem

\bibitem[\protect\citeauthoryear{Maury and Faure}{2019}]{Maury2019}
\begin{bbook}
\bauthor{\bsnm{Maury}, \binits{B.}},
\bauthor{\bsnm{Faure}, \binits{S.}}:
\bbtitle{Crowds in Equations. {An} Introduction to the Microscopic Modeling of
  Crowds}.
\bsertitle{Adv. {Textb}. {Math}.}
\bpublisher{World Scientific},
\blocation{Hackensack, NJ}
(\byear{2019}).
\doiurl{10.1142/q0163}
\end{bbook}
\endbibitem

\bibitem[\protect\citeauthoryear{Dubois et~al.}{2018}]{Dubois2018}
\begin{barticle}
\bauthor{\bsnm{Dubois}, \binits{F.}},
\bauthor{\bsnm{Acary}, \binits{V.}},
\bauthor{\bsnm{Jean}, \binits{M.}}:
\batitle{The {Contact} {Dynamics} method: {A} nonsmooth story}.
\bjtitle{Comptes Rendus Mécanique}
\bvolume{346}(\bissue{3}),
\bfpage{247}--\blpage{262}
(\byear{2018})
\doiurl{10.1016/j.crme.2017.12.009} .
Accessed 2022-04-05
\end{barticle}
\endbibitem

\bibitem[\protect\citeauthoryear{Brogliato et~al.}{2002}]{Brogliato2002}
\begin{barticle}
\bauthor{\bsnm{Brogliato}, \binits{B.}},
\bauthor{\bsnm{Dam}, \binits{A.}},
\bauthor{\bsnm{Paoli}, \binits{L.}},
\bauthor{\bsnm{Ge´not}, \binits{F.}},
\bauthor{\bsnm{Abadie}, \binits{M.}}:
\batitle{Numerical simulation of finite dimensional multibody nonsmooth
  mechanical systems}.
\bjtitle{Applied Mechanics Reviews}
\bvolume{55}(\bissue{2}),
\bfpage{107}--\blpage{150}
(\byear{2002})
\doiurl{10.1115/1.1454112} .
Accessed 2023-06-13
\end{barticle}
\endbibitem

\bibitem[\protect\citeauthoryear{Schindler and Acary}{2014}]{Schindler2014}
\begin{barticle}
\bauthor{\bsnm{Schindler}, \binits{T.}},
\bauthor{\bsnm{Acary}, \binits{V.}}:
\batitle{Timestepping schemes for nonsmooth dynamics based on discontinuous
  {Galerkin} methods: {Definition} and outlook}.
\bjtitle{Mathematics and Computers in Simulation}
\bvolume{95},
\bfpage{180}--\blpage{199}
(\byear{2014})
\doiurl{10.1016/j.matcom.2012.04.012} .
Accessed 2023-05-29
\end{barticle}
\endbibitem

\bibitem[\protect\citeauthoryear{Müller et~al.}{2006}]{Mueller2006}
\begin{botherref}
\oauthor{\bsnm{Müller}, \binits{M.}},
\oauthor{\bsnm{Heidelberger}, \binits{B.}},
\oauthor{\bsnm{Hennix}, \binits{M.}},
\oauthor{\bsnm{Ratcliff}, \binits{J.}}:
Position {Based} {Dynamics}
(2006)
\doiurl{10.2312/PE/vriphys/vriphys06/071-080} .
Accessed 2022-07-08
\end{botherref}
\endbibitem

\bibitem[\protect\citeauthoryear{Bender et~al.}{2017}]{Bender2017}
\begin{botherref}
\oauthor{\bsnm{Bender}, \binits{J.}},
\oauthor{\bsnm{Müller}, \binits{M.}},
\oauthor{\bsnm{Macklin}, \binits{M.}}:
A {Survey} on {Position} {Based} {Dynamics}
(2017)
\doiurl{10.2312/egt.20171034} .
Accessed 2022-07-09
\end{botherref}
\endbibitem

\bibitem[\protect\citeauthoryear{Macklin et~al.}{2016}]{Macklin2016}
\begin{bchapter}
\bauthor{\bsnm{Macklin}, \binits{M.}},
\bauthor{\bsnm{Müller}, \binits{M.}},
\bauthor{\bsnm{Chentanez}, \binits{N.}}:
\bctitle{{XPBD}: position-based simulation of compliant constrained dynamics}.
In: \bbtitle{Proceedings of the 9th {International} {Conference} on {Motion} In
  {Games}}.
\bsertitle{{MIG} '16},
pp. \bfpage{49}--\blpage{54}.
\bpublisher{Association for Computing Machinery},
\blocation{New York, NY, USA}
(\byear{2016}).
\doiurl{10.1145/2994258.2994272} .
\burl{https://doi.org/10.1145/2994258.2994272}
Accessed 2022-07-08
\end{bchapter}
\endbibitem

\bibitem[\protect\citeauthoryear{Macklin et~al.}{2019}]{Macklin2019}
\begin{bchapter}
\bauthor{\bsnm{Macklin}, \binits{M.}},
\bauthor{\bsnm{Storey}, \binits{K.}},
\bauthor{\bsnm{Lu}, \binits{M.}},
\bauthor{\bsnm{Terdiman}, \binits{P.}},
\bauthor{\bsnm{Chentanez}, \binits{N.}},
\bauthor{\bsnm{Jeschke}, \binits{S.}},
\bauthor{\bsnm{Müller}, \binits{M.}}:
\bctitle{Small steps in physics simulation}.
In: \bbtitle{Proceedings of the 18th Annual {ACM} {SIGGRAPH}/{Eurographics}
  {Symposium} on {Computer} {Animation}}.
\bsertitle{{SCA} '19},
pp. \bfpage{1}--\blpage{7}.
\bpublisher{Association for Computing Machinery},
\blocation{New York, NY, USA}
(\byear{2019}).
\doiurl{10.1145/3309486.3340247} .
\burl{https://doi.org/10.1145/3309486.3340247}
Accessed 2022-06-03
\end{bchapter}
\endbibitem

\bibitem[\protect\citeauthoryear{Macklin and Müller}{2013}]{Macklin2013}
\begin{barticle}
\bauthor{\bsnm{Macklin}, \binits{M.}},
\bauthor{\bsnm{Müller}, \binits{M.}}:
\batitle{Position based fluids}.
\bjtitle{ACM Transactions on Graphics}
\bvolume{32}(\bissue{4}),
\bfpage{104}--\blpage{110412}
(\byear{2013})
\doiurl{10.1145/2461912.2461984} .
Accessed 2023-08-19
\end{barticle}
\endbibitem

\bibitem[\protect\citeauthoryear{Macklin et~al.}{2014}]{Macklin2014}
\begin{barticle}
\bauthor{\bsnm{Macklin}, \binits{M.}},
\bauthor{\bsnm{Müller}, \binits{M.}},
\bauthor{\bsnm{Chentanez}, \binits{N.}},
\bauthor{\bsnm{Kim}, \binits{T.-Y.}}:
\batitle{Unified particle physics for real-time applications}.
\bjtitle{ACM Transactions on Graphics}
\bvolume{33}(\bissue{4}),
\bfpage{153}--\blpage{115312}
(\byear{2014})
\doiurl{10.1145/2601097.2601152} .
Accessed 2023-08-07
\end{barticle}
\endbibitem

\bibitem[\protect\citeauthoryear{Müller et~al.}{2020}]{Mueller2020}
\begin{barticle}
\bauthor{\bsnm{Müller}, \binits{M.}},
\bauthor{\bsnm{Macklin}, \binits{M.}},
\bauthor{\bsnm{Chentanez}, \binits{N.}},
\bauthor{\bsnm{Jeschke}, \binits{S.}},
\bauthor{\bsnm{Kim}, \binits{T.-Y.}}:
\batitle{Detailed {Rigid} {Body} {Simulation} with {Extended} {Position}
  {Based} {Dynamics}}.
\bjtitle{Computer Graphics Forum}
\bvolume{39}(\bissue{8}),
\bfpage{101}--\blpage{112}
(\byear{2020})
\doiurl{10.1111/cgf.14105} .
Accessed 2023-08-19
\end{barticle}
\endbibitem

\bibitem[\protect\citeauthoryear{NVIDIA}{2018}]{NVIDIA2018}
\begin{botherref}
\oauthor{\bsnm{NVIDIA}}:
{PhysX} 5.0 {SDK}
(2018).
\url{https://developer.nvidia.com/physx-sdk}
Accessed 2023-08-07
\end{botherref}
\endbibitem

\bibitem[\protect\citeauthoryear{NVIDIA}{2020}]{NVIDIA2020}
\begin{botherref}
\oauthor{\bsnm{NVIDIA}}:
Develop on {NVIDIA} {Omniverse} {Platform}
(2020).
\url{https://developer.nvidia.com/omniverse}
Accessed 2023-08-07
\end{botherref}
\endbibitem

\bibitem[\protect\citeauthoryear{Camara et~al.}{2016}]{Camara2016}
\begin{barticle}
\bauthor{\bsnm{Camara}, \binits{M.}},
\bauthor{\bsnm{Mayer}, \binits{E.}},
\bauthor{\bsnm{Darzi}, \binits{A.}},
\bauthor{\bsnm{Pratt}, \binits{P.}}:
\batitle{Soft tissue deformation for surgical simulation: a position-based
  dynamics approach}.
\bjtitle{International Journal of Computer Assisted Radiology and Surgery}
\bvolume{11}(\bissue{6}),
\bfpage{919}--\blpage{928}
(\byear{2016})
\doiurl{10.1007/s11548-016-1373-8} .
Accessed 2023-08-07
\end{barticle}
\endbibitem

\bibitem[\protect\citeauthoryear{Weiss et~al.}{2017}]{Weiss2017}
\begin{botherref}
\oauthor{\bsnm{Weiss}, \binits{T.}},
\oauthor{\bsnm{Litteneker}, \binits{A.}},
\oauthor{\bsnm{Jiang}, \binits{C.}},
\oauthor{\bsnm{Terzopoulos}, \binits{D.}}:
Position-{Based} {Multi}-{Agent} {Dynamics} for {Real}-{Time} {Crowd}
  {Simulation} ({MiG} paper).
Proceedings of the Tenth International Conference on Motion in Games,
1--8
(2017)
\doiurl{10.1145/3136457.3136462} .
arXiv: 1802.02673.
Accessed 2022-03-20
\end{botherref}
\endbibitem

\bibitem[\protect\citeauthoryear{Frâncu and Moldoveanu}{2017}]{Francu2017}
\begin{barticle}
\bauthor{\bsnm{Frâncu}, \binits{M.}},
\bauthor{\bsnm{Moldoveanu}, \binits{F.}}:
\batitle{Position based simulation of solids with accurate contact handling}.
\bjtitle{Computers \& Graphics}
\bvolume{69},
\bfpage{12}--\blpage{23}
(\byear{2017})
\doiurl{10.1016/j.cag.2017.09.004} .
Accessed 2023-05-31
\end{barticle}
\endbibitem

\bibitem[\protect\citeauthoryear{Pécol et~al.}{2011}]{Pecol2011}
\begin{barticle}
\bauthor{\bsnm{Pécol}, \binits{P.}},
\bauthor{\bsnm{Dal~Pont}, \binits{S.}},
\bauthor{\bsnm{Erlicher}, \binits{S.}},
\bauthor{\bsnm{Argoul}, \binits{P.}}:
\batitle{Smooth/non-smooth contact modeling of human crowds movement: numerical
  aspects and application to emergency evacuations}.
\bjtitle{Annals of Solid and Structural Mechanics}
\bvolume{2}(\bissue{2}),
\bfpage{69}--\blpage{85}
(\byear{2011})
\doiurl{10.1007/s12356-011-0019-3} .
Accessed 2023-08-08
\end{barticle}
\endbibitem

\bibitem[\protect\citeauthoryear{Monji-Azad et~al.}{2023}]{MonjiAzad2023}
\begin{barticle}
\bauthor{\bsnm{Monji-Azad}, \binits{S.}},
\bauthor{\bsnm{Kinz}, \binits{M.}},
\bauthor{\bsnm{Hesser}, \binits{J.}},
\bauthor{\bsnm{Löw}, \binits{N.}}:
\batitle{{SimTool}: {A} toolset for soft body simulation using {Flex} and
  {Unreal} {Engine}}.
\bjtitle{Software Impacts}
\bvolume{17},
\bfpage{100521}
(\byear{2023})
\doiurl{10.1016/j.simpa.2023.100521} .
Accessed 2023-08-07
\end{barticle}
\endbibitem

\bibitem[\protect\citeauthoryear{Brogliato et~al.}{2006}]{Brogliato2006}
\begin{barticle}
\bauthor{\bsnm{Brogliato}, \binits{B.}},
\bauthor{\bsnm{Daniilidis}, \binits{A.}},
\bauthor{\bsnm{Lemaréchal}, \binits{C.}},
\bauthor{\bsnm{Acary}, \binits{V.}}:
\batitle{On the equivalence between complementarity systems, projected systems
  and differential inclusions}.
\bjtitle{Systems \& Control Letters}
\bvolume{55}(\bissue{1}),
\bfpage{45}--\blpage{51}
(\byear{2006})
\doiurl{10.1016/j.sysconle.2005.04.015} .
Accessed 2022-04-05
\end{barticle}
\endbibitem

\bibitem[\protect\citeauthoryear{Deuflhard and Röblitz}{2015}]{Deuflhard2015}
\begin{bbook}
\bauthor{\bsnm{Deuflhard}, \binits{P.}},
\bauthor{\bsnm{Röblitz}, \binits{S.}}:
\bbtitle{A Guide to Numerical Modelling in Systems Biology}.
\bsertitle{Texts {Comput}. {Sci}. {Eng}.},
vol. \bseriesno{12}.
\bpublisher{Springer},
\blocation{Cham}
(\byear{2015}).
\doiurl{10.1007/978-3-319-20059-0}
\end{bbook}
\endbibitem

\bibitem[\protect\citeauthoryear{Moreau}{1999}]{Moreau1999}
\begin{barticle}
\bauthor{\bsnm{Moreau}, \binits{J.J.}}:
\batitle{Numerical aspects of the sweeping process}.
\bjtitle{Computer Methods in Applied Mechanics and Engineering}
\bvolume{177}(\bissue{3}),
\bfpage{329}--\blpage{349}
(\byear{1999})
\doiurl{10.1016/S0045-7825(98)00387-9} .
Accessed 2022-11-29
\end{barticle}
\endbibitem

\bibitem[\protect\citeauthoryear{Wu et~al.}{2020}]{Wu2020}
\begin{barticle}
\bauthor{\bsnm{Wu}, \binits{S.-L.}},
\bauthor{\bsnm{Zhou}, \binits{T.}},
\bauthor{\bsnm{Chen}, \binits{X.}}:
\batitle{A {Gauss}--{Seidel} {Type} {Method} for {Dynamic} {Nonlinear}
  {Complementarity} {Problems}}.
\bjtitle{SIAM Journal on Control and Optimization}
\bvolume{58}(\bissue{6}),
\bfpage{3389}--\blpage{3412}
(\byear{2020})
\doiurl{10.1137/19M1268884} .
Accessed 2023-06-23
\end{barticle}
\endbibitem

\bibitem[\protect\citeauthoryear{Edmond and Thibault}{2006}]{Edmond2006}
\begin{barticle}
\bauthor{\bsnm{Edmond}, \binits{J.F.}},
\bauthor{\bsnm{Thibault}, \binits{L.}}:
\batitle{{BV} solutions of nonconvex sweeping process differential inclusion
  with perturbation}.
\bjtitle{Journal of Differential Equations}
\bvolume{226}(\bissue{1}),
\bfpage{135}--\blpage{179}
(\byear{2006})
\doiurl{10.1016/j.jde.2005.12.005} .
Accessed 2022-10-09
\end{barticle}
\endbibitem

\bibitem[\protect\citeauthoryear{Adly et~al.}{2016}]{Adly2016}
\begin{barticle}
\bauthor{\bsnm{Adly}, \binits{S.}},
\bauthor{\bsnm{Nacry}, \binits{F.}},
\bauthor{\bsnm{Thibault}, \binits{L.}}:
\batitle{Preservation of {Prox}-{Regularity} of {Sets} with {Applications} to
  {Constrained} {Optimization}}.
\bjtitle{SIAM Journal on Optimization}
\bvolume{26}(\bissue{1}),
\bfpage{448}--\blpage{473}
(\byear{2016})
\doiurl{10.1137/15M1032739} .
Accessed 2023-06-06
\end{barticle}
\endbibitem

\bibitem[\protect\citeauthoryear{Luke}{2008}]{Luke2008}
\begin{barticle}
\bauthor{\bsnm{Luke}, \binits{D.R.}}:
\batitle{Finding {Best} {Approximation} {Pairs} {Relative} to a {Convex} and
  {Prox}-{Regular} {Set} in a {Hilbert} {Space}}.
\bjtitle{SIAM Journal on Optimization}
\bvolume{19}(\bissue{2}),
\bfpage{714}--\blpage{739}
(\byear{2008})
\doiurl{10.1137/070681399} .
Accessed 2023-06-06
\end{barticle}
\endbibitem

\bibitem[\protect\citeauthoryear{Lewis et~al.}{2009}]{Lewis2009}
\begin{barticle}
\bauthor{\bsnm{Lewis}, \binits{A.S.}},
\bauthor{\bsnm{Luke}, \binits{D.R.}},
\bauthor{\bsnm{Malick}, \binits{J.}}:
\batitle{Local linear convergence for alternating and averaged nonconvex
  projections}.
\bjtitle{Foundations of Computational Mathematics}
\bvolume{9}(\bissue{4}),
\bfpage{485}--\blpage{513}
(\byear{2009})
\doiurl{10.1007/s10208-008-9036-y} .
Accessed 2022-12-06
\end{barticle}
\endbibitem

\bibitem[\protect\citeauthoryear{Ye et~al.}{2021}]{Ye2021}
\begin{barticle}
\bauthor{\bsnm{Ye}, \binits{J.J.}},
\bauthor{\bsnm{Yuan}, \binits{X.}},
\bauthor{\bsnm{Zeng}, \binits{S.}},
\bauthor{\bsnm{Zhang}, \binits{J.}}:
\batitle{Variational {Analysis} {Perspective} on {Linear} {Convergence} of
  {Some} {First} {Order} {Methods} for {Nonsmooth} {Convex} {Optimization}
  {Problems}}.
\bjtitle{Set-Valued and Variational Analysis}
\bvolume{29}(\bissue{4}),
\bfpage{803}--\blpage{837}
(\byear{2021})
\doiurl{10.1007/s11228-021-00591-3} .
Accessed 2023-06-29
\end{barticle}
\endbibitem

\bibitem[\protect\citeauthoryear{Bernicot and Venel}{2010}]{Bernicot2010}
\begin{barticle}
\bauthor{\bsnm{Bernicot}, \binits{F.}},
\bauthor{\bsnm{Venel}, \binits{J.}}:
\batitle{Differential inclusions with proximal normal cones in {Banach}
  spaces}.
\bjtitle{Journal of Convex Analysis}
\bvolume{17}(\bissue{2}),
\bfpage{451}--\blpage{484}
(\byear{2010}).
Accessed 2022-06-01
\end{barticle}
\endbibitem

\bibitem[\protect\citeauthoryear{Venel}{2011}]{Venel2011}
\begin{barticle}
\bauthor{\bsnm{Venel}, \binits{J.}}:
\batitle{A numerical scheme for a class of sweeping processes}.
\bjtitle{Numerische Mathematik}
\bvolume{118}(\bissue{2}),
\bfpage{367}--\blpage{400}
(\byear{2011})
\doiurl{10.1007/s00211-010-0329-0} .
Accessed 2022-04-05
\end{barticle}
\endbibitem

\bibitem[\protect\citeauthoryear{Bernard et~al.}{2011}]{Bernard2011}
\begin{barticle}
\bauthor{\bsnm{Bernard}, \binits{F.}},
\bauthor{\bsnm{Thibault}, \binits{L.}},
\bauthor{\bsnm{Zlateva}, \binits{N.}}:
\batitle{Prox-{Regular} {Sets} and {Epigraphs} in {Uniformly} {Convex} {Banach}
  {Spaces}: {Various} {Regularities} and {Other} {Properties}}.
\bjtitle{Transactions of the American Mathematical Society}
\bvolume{363}(\bissue{4}),
\bfpage{2211}--\blpage{2247}
(\byear{2011}).
Accessed 2023-06-06
\end{barticle}
\endbibitem

\bibitem[\protect\citeauthoryear{Rockafellar and Wets}{1998}]{Rockafellar1998}
\begin{bbook}
\bauthor{\bsnm{Rockafellar}, \binits{R.T.}},
\bauthor{\bsnm{Wets}, \binits{R.J.-B.}}:
\bbtitle{Variational Analysis}.
\bsertitle{Grundlehren {Math}. {Wiss}.},
vol. \bseriesno{317}.
\bpublisher{Springer},
\blocation{Berlin}
(\byear{1998}).
\doiurl{10.1007/978-3-642-02431-3}
\end{bbook}
\endbibitem

\bibitem[\protect\citeauthoryear{Thibault}{2022}]{Thibault2022}
\begin{bbook}
\bauthor{\bsnm{Thibault}, \binits{L.}}:
\bbtitle{Unilateral Variational Analysis in {Banach} Spaces. {In} 2 Parts.
  {Part} {I}: {General} Theory. {Part} {II}: {Special} Classes of Functions And
  sets (to appear)}.
\bpublisher{World Scientific},
\blocation{Singapore}
(\byear{2022}).
\doiurl{10.1142/12797}
\end{bbook}
\endbibitem

\bibitem[\protect\citeauthoryear{Adly et~al.}{2017}]{Adly2017}
\begin{barticle}
\bauthor{\bsnm{Adly}, \binits{S.}},
\bauthor{\bsnm{Nacry}, \binits{F.}},
\bauthor{\bsnm{Thibault}, \binits{L.}}:
\batitle{Discontinuous sweeping process with prox-regular sets}.
\bjtitle{ESAIM: Control, Optimisation and Calculus of Variations}
\bvolume{23}(\bissue{4}),
\bfpage{1293}--\blpage{1329}
(\byear{2017})
\doiurl{10.1051/cocv/2016053} .
Accessed 2022-10-07
\end{barticle}
\endbibitem

\bibitem[\protect\citeauthoryear{Dontchev and Rockafellar}{2014}]{Dontchev2014}
\begin{bbook}
\bauthor{\bsnm{Dontchev}, \binits{A.L.}},
\bauthor{\bsnm{Rockafellar}, \binits{R.T.}}:
\bbtitle{Implicit Functions and Solution Mappings. {A} View from Variational
  Analysis},
\bedition{2nd updated ed.} edn.
\bsertitle{Springer {Ser}. {Oper}. {Res}. {Financ}. {Eng}.}
\bpublisher{Springer},
\blocation{New York, NY}
(\byear{2014}).
\doiurl{10.1007/978-1-4939-1037-3}
\end{bbook}
\endbibitem

\bibitem[\protect\citeauthoryear{Hesse and Luke}{2013}]{Hesse2013}
\begin{barticle}
\bauthor{\bsnm{Hesse}, \binits{R.}},
\bauthor{\bsnm{Luke}, \binits{D.R.}}:
\batitle{Nonconvex notions of regularity and convergence of fundamental
  algorithms for feasibility problems}.
\bjtitle{SIAM Journal on Optimization}
\bvolume{23}(\bissue{4}),
\bfpage{2397}--\blpage{2419}
(\byear{2013})
\doiurl{10.1137/120902653}
{\href{https://arxiv.org/abs/https://doi.org/10.1137/120902653}{{https://doi.org/10.1137/120902653}}}
\end{barticle}
\endbibitem

\bibitem[\protect\citeauthoryear{Wang}{2015}]{Wang2015}
\begin{barticle}
\bauthor{\bsnm{Wang}, \binits{H.}}:
\batitle{A chebyshev semi-iterative approach for accelerating projective and
  position-based dynamics}.
\bjtitle{ACM Transactions on Graphics}
\bvolume{34}(\bissue{6}),
\bfpage{246}--\blpage{12469}
(\byear{2015})
\doiurl{10.1145/2816795.2818063} .
Accessed 2023-05-31
\end{barticle}
\endbibitem

\bibitem[\protect\citeauthoryear{Chen et~al.}{2023}]{Chen2023}
\begin{botherref}
\oauthor{\bsnm{Chen}, \binits{Y.}},
\oauthor{\bsnm{Han}, \binits{Y.}},
\oauthor{\bsnm{Chen}, \binits{J.}},
\oauthor{\bsnm{Teran}, \binits{J.}}:
Position-{Based} {Nonlinear} {Gauss}-{Seidel} for {Quasistatic}
  {Hyperelasticity}.
arXiv:2306.09021 [cs]
(2023).
\doiurl{10.48550/arXiv.2306.09021} .
\url{http://arxiv.org/abs/2306.09021}
Accessed 2023-06-20
\end{botherref}
\endbibitem

\bibitem[\protect\citeauthoryear{{Acary, Vincent and Bremond, Maurice and
  Huber, Olivier and Perignon, Franck and Pissard-Gibollet,
  Roger}}{2023}]{Acary2023}
\begin{botherref}
\oauthor{\bsnm{{Acary, Vincent and Bremond, Maurice and Huber, Olivier and
  Perignon, Franck and Pissard-Gibollet, Roger}}}:
{Siconos}
(2023).
\url{{https://hal.science/hal-04056972}}
\end{botherref}
\endbibitem

\bibitem[\protect\citeauthoryear{Brezis}{2011}]{Brezis2011}
\begin{bbook}
\bauthor{\bsnm{Brezis}, \binits{H.}}:
\bbtitle{Functional Analysis, {Sobolev} Spaces and Partial Differential
  Equations}.
\bsertitle{Universitext}.
\bpublisher{Springer},
\blocation{New York, NY}
(\byear{2011}).
\doiurl{10.1007/978-0-387-70914-7}
\end{bbook}
\endbibitem

\end{thebibliography}

\end{document}